\def\theequation{\thesection.\@arabic \c@equation}
\def\theenumi{\@roman\c@enumi}
\def\@citecolor{blue}
\def\@linkcolor{blue}
\def\@urlcolor{blue}
\newtheorem{theorem}[equation]{Theorem}
\newtheorem{lemma}[equation]{Lemma}
\newtheorem{proposition}[equation]{Proposition}
\newtheorem{corollary}[equation]{Corollary}
\newtheorem{claim*}{Claim}
\newtheorem{question}[equation]{Question}
\newenvironment{customlemma}[1]
  {\innercustomlemma}
  {\endinnercustomlemma}
\theoremstyle{definition}
\newtheorem{remark}[equation]{Remark}
\newtheorem{eg}[equation]{Example}
\newtheorem{definition}[equation]{Definition}
\newtheorem{notn}[equation]{Notation}
\newenvironment{notation}[1][]{\begin{notn}[#1]\pushQED{\qed}}{\popQED
\end{notn}}
\newsavebox{\upperboundtheorem}
\title{On the projective dimension of $5$ Quadric Almost complete intersections with low multiplicities }
\author{Sabine El Khoury} 
\address{Department of Mathematics, American University of Beirut, Beirut,
Lebanon.}
\email{se24@aub.edu.lb}  
\date {2,19,2018}
\keywords{projective dimension, almost complete intersections, primary ideals}
 \subjclass{13D02; 13D05}
\begin{document}

 \begin{abstract} Let $S$ be a polynomial ring over an algebraic closed field $k$ and $ \mathfrak p =(x,y,z,w) $ a homogeneous height four prime ideal.   We give a finite characterization of the degree two component of ideals primary to $\mathfrak p$, with multiplicity $e \leq 3$. We use this result to give a tight bound on the projective dimension of almost complete intersections generated by five quadrics with $e \leq 3$. 
  \end{abstract}
\maketitle
\section{Introduction}
Let $S$ be the polynomial ring over an algebraically closed field $k$, and $x,y,z,w \in S$ four linearly independent linear forms. When  $\mathfrak p=(x,y)$ is a height $2$ prime ideal, Engheta shows there are two distinct types of $\mathfrak {p}$-primary ideals of multiplicity $2$, \cite[Proposition 11]{E07}.  As a consequence, he proves that if $I$ is an ideal generated by three cubics, then the projective dimension of $S/I$ is at most 36 \cite{E10}. This answers a specific case to Stillman's question:
\begin{question}(Stillman \cite[Problem 3.14]{PS09}) Is there a bound on the pd$(S/I)$ depending only on $d_1, \ldots, d_N $ and $N$, where $d_i =deg(f_i)$?
\end{question}
Furthermore, the bound found by Engheta is not optimal see \cite{FMP16} and  \cite{MS12}, as the expected projective dimension of three cubics is $5$. In \cite{HMMS16}, Huneke et.al. showed that for each $e \geq 3$ and for any $n \in \mathbb N$, there exists an ideal $J$ primary to a linear prime ideal $(x_1, \ldots x_h)$, with  $e(S/J)=e$ and pd$(S/J) \geq n$. Therefore, no natural extensions for Engheta's result on multiplicity $2$ primary ideals would be possible. In \cite{MM17}, Mantero and McCullough found a finite classification for $\mathfrak p$-primary ideals $J$ of multiplicities $3$ and $4$, by imposing an upper bound on the degrees of the generators of $J$. In particular, they find a finite classification of the linear, quadric and cubic generators of such $\mathfrak p$-primary ideals of multiplicities $3$ and $4$. They use their results to improve Engheta's estimates on three cubics, and show that the projective dimension is at most $5$, {\cite{MM171}. 
Stillman's question was proven recently by Ananyan and Hochster \cite{AH17}. They show the existence of a bound for any homogenous ideal $I$ generated by $N$ forms of degree at most $d$. However, the bounds they produce are very large even for ideals generated by quadrics or cubics. In a previous paper \cite {AH12}, they get a tighter bound for ideals generated by $N$ quadrics, but again not optimal. It remains open to find what are the best bounds for projective dimensions.  \\
When $\mathfrak p =(x,y,z)$, Huneke et.al. give a characterization of the degree $2$ component of an ideal $J$ primary to $\mathfrak p$ with e$(S/J) \leq 4$, see \cite{HMMS13}. By using this classification, they obtain a tight upper bound for the projective dimension generated by four quadrics. They show that if $I$ is generated by four quadrics, then pd$(S/I) \leq 6$. They also pose the following question:
\begin{question} (\label{HMMS}\cite[Question 10.2]{HMMS13} and \cite [Question 6.2]{HMMS131}) Let $S$ be a polynomial ring, and let $I$ be an ideal of $S$ generated by $n$ quadrics and having ht$(I)=h$. Is it true that pd$(S/I) \leq h(n-h+1)$?
\end{question} 
In our paper, we treat the case  when $\mathfrak p =(x,y,z,w)$. Our results, Propositions \ref{multiplicity2} and \ref{mult3},  offer a finite classification, in the sense of \cite{HMMS13}, for $\mathfrak p$-primary ideals $J$ with multiplicity $e(S/J) \leq 3$. Our characterization depends on the degree two component of $J$. We use  these classifications along with a result of \cite{HT17}, to answer Question \ref{HMMS} for almost complete intersections $I$ generated by $5$ quadrics with low multiplicities. We show that if $I$ is an almost complete intersection with multiplicity  $e(S/I) \leq 3$, then pd$(S/I) \leq 8$. \\ 
 Beyond the applications to Stillman's question, different characterization and structures of $\mathfrak p$-primary ideals have been given by algebraic geometers who study  vector bundles and multiple or nilpotent structures, see \cite {M92}, \cite{M04} and \cite{V09}  for instance.  \\ 
The rest of the paper is divided in the following way. In section $2$, we collect results that are useful to us and set our notation. In Sections $3$ and $4$, we give our characterization of height four primary ideals with multiplicities $2$ and $3$ respectively. These primary ideals can occur as components of the unmixed part of our $5$ quadrics ideal or a direct link to this ideal. In section $4$, we apply our previous results to prove that the projective dimension of an almost complete intersection generated by $5$ quadrics with $e \leq 3$ is at most $8$. Appendix $A$ contains a list of primary, unmixed ideals that are essential to our sections.

\section {Preliminaries}

In this section, we set our notations and collect results that are useful to our theorems. 

\subsection*{Unmixed ideals and multiplicity} We use the associativity formula to compute the multiplicity of an ideal

\begin{proposition}[Associativity Formula] \cite [Theorem 11.2.4]{HS06}If J is an ideal of S, then 
$$e(S/J) = \displaystyle \sum_{\tiny \begin{array}{ccc} J \subset \mathfrak p\\ \mbox{ht}(\mathfrak p)=\mbox{ht}(J) \end{array}}   e(S/\mathfrak p)\lambda(S_{\mathfrak p}/J_{\mathfrak p})$$
where $e(S)$ denotes the multiplicity of a graded ring $S$, and $\lambda(M)$ denotes the length of an $S$-module $M$.
\end{proposition}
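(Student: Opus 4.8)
The plan is to deduce the formula from the additivity of multiplicity along a prime filtration of the graded module $M = S/J$. Since $S$ is Noetherian and $M$ is a finitely generated graded $S$-module, I would first fix a filtration $0 = M_0 \subsetneq M_1 \subsetneq \cdots \subsetneq M_\ell = M$ by graded submodules with $M_i/M_{i-1} \cong (S/\mathfrak p_i)(-a_i)$ for homogeneous primes $\mathfrak p_i \supseteq J$ and integer shifts $a_i$; such a filtration exists by the graded version of the standard prime-filtration argument.

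Next I would record two properties of the multiplicity $e(-)$ of a graded module, both coming from the theory of Hilbert polynomials. First, the Hilbert function is additive on short exact sequences, hence so is the Hilbert polynomial, and the multiplicity is insensitive to the shifts $(-a_i)$. Second, $\dim(S/\mathfrak p_i) < \dim(S/J)$ forces the Hilbert polynomial of $S/\mathfrak p_i$ to have strictly smaller degree, so such a factor contributes nothing to the top-degree coefficient that defines $e(S/J)$. Applying these to the filtration yields
\[
 e(S/J) = \sum_{i \,:\, \height(\mathfrak p_i) = \height(J)} e(S/\mathfrak p_i),
\]
where I have used that $S$ is catenary and equidimensional, so that $\dim(S/\mathfrak p_i) = \dim(S/J)$ is equivalent to $\height(\mathfrak p_i) = \height(J)$. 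Note that each such $\mathfrak p_i$ is automatically a minimal prime of $J$, since any prime of height $\height(J)$ containing $J$ is minimal over $J$.

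The remaining step is to organize the sum by the distinct primes $\mathfrak p$ occurring with $\height(\mathfrak p) = \height(J)$ and to identify the number of factors equal to a given such $\mathfrak p$ with the length $\lambda(S_\mathfrak p/J_\mathfrak p)$. To do this I would localize the whole filtration at a fixed minimal prime $\mathfrak p$ of maximal dimension. A factor $(S/\mathfrak p_i)_\mathfrak p$ vanishes unless $\mathfrak p_i \subseteq \mathfrak p$; but $\mathfrak p_i \supseteq J$ together with the minimality of $\mathfrak p$ over $J$ forces $\mathfrak p_i = \mathfrak p$, in which case the localized factor is the residue field $\kappa(\mathfrak p)$ of length $1$. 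Hence $\lambda(S_\mathfrak p/J_\mathfrak p) = \lambda((S/J)_\mathfrak p)$ equals exactly $\#\{i : \mathfrak p_i = \mathfrak p\}$, and substituting this count into the displayed sum produces the associativity formula.

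The main obstacle is the input from Hilbert-polynomial theory invoked in the second paragraph: namely that multiplicity is additive on short exact sequences and that lower-dimensional factors drop out. Once that additivity is in hand, the localization bookkeeping in the last step is routine. I would therefore spend the bulk of the argument making precise the claim that the degree of the Hilbert polynomial of $S/\mathfrak p_i$ equals $\dim(S/\mathfrak p_i) - 1$ and that only the factors of maximal dimension survive into the leading coefficient.
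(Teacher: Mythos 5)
Your argument is correct, but there is no internal proof to compare it with: the paper states this proposition purely as a citation to Huneke--Swanson \cite[Theorem 11.2.4]{HS06} and uses it as a black box throughout. Your proof is the standard filtration argument, and all the essential steps are in place: the graded prime filtration with factors $(S/\mathfrak p_i)(-a_i)$ where $\mathfrak p_i \supseteq J$; additivity of Hilbert polynomials on short exact sequences together with insensitivity of the leading coefficient to the shifts; the observation that factors with $\dim(S/\mathfrak p_i) < \dim(S/J)$ contribute nothing in the top degree; the translation between height and dimension, which is legitimate because $S$ is a polynomial ring over a field, so $\mbox{ht}(\mathfrak q)+\dim(S/\mathfrak q)=\dim(S)$ for every prime $\mathfrak q$; and the localization count $\lambda(S_{\mathfrak p}/J_{\mathfrak p})=\#\{i : \mathfrak p_i=\mathfrak p\}$, which rests exactly on the minimality of $\mathfrak p$ over $J$ that you invoke (and on the fact, which you do state, that every $\mathfrak p_i$ contains $J$). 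Two remarks. First, your leading-coefficient argument tacitly assumes $\dim(S/J)\geq 1$; when $J$ is primary to the irrelevant maximal ideal the Hilbert polynomial is identically zero, and one should instead take $e(S/J)=\lambda(S/J)$, for which additivity of length along the same filtration gives the (single-term) formula directly. Second, the result actually cited is the local associativity formula for $\mathfrak m$-primary ideals, which \cite{HS06} derives from an additivity-and-reduction theorem; your direct graded Hilbert-polynomial argument proves the statement as printed, in the graded form in which the paper applies it, so it is if anything the more natural route here.
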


An ideal $J$ of height $h$ is unmixed if ht$(\mathfrak p) = h$ for every $\mathfrak p \in$ Ass$(S/J)$. The unmixed part of $J$, denoted $J^{un}$, is the intersection of all the components of J of minimum height. We have $J \subset J^{un}$, and $e(S/J) = e(S/J^{un})$. We follow  the notation of \cite{E07}: if $J$ is an unmixed ideal, we say it is of type
$\langle e_1,\ldots ,e_m; \lambda_1,\ldots, \lambda_m \rangle$. If $J$ has $m$ associated prime ideals $\mathfrak p_1, \mathfrak p_2 \ldots \mathfrak p_m$, then $e(S/J)= \sum_{i=1}^me_i \lambda_i$, where $e_i= e(S/ \mathfrak p)$ and $\lambda_i= \lambda (S_{\mathfrak p_i}/J_{\mathfrak p_i})$. We also get

\begin{lemma} \label {equalideals} \cite [lemma 8] {E07} Let $J \subset S$ be an unmixed ideal. If $I \subset S$ is an ideal containing $J$ such that $ht(I) = ht(J)$ and $e(S/I)=e(S/J)$, then $J=I$. 
\end{lemma}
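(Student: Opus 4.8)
The plan is to compare the multiplicities of $S/J$ and $S/I$ prime by prime using the Associativity Formula, and then to exploit the fact that $J$ has no embedded primes to pass from equality at each minimal prime to the global equality $J=I$.

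First I would match up the index sets of the two formulas. Set $h=\height(J)=\height(I)$. Since $J\subseteq I$, any height-$h$ prime minimal over $I$ contains $J$ and, having height equal to $\height(J)$, is automatically minimal over $J$; conversely, if $\mathfrak p$ is a height-$h$ minimal prime of $J$ with $I\not\subseteq\mathfrak p$, then $I_{\mathfrak p}=S_{\mathfrak p}$ and $\lambda(S_{\mathfrak p}/I_{\mathfrak p})=0$. Because $J$ is unmixed, the primes appearing in the Associativity Formula for $e(S/J)$ are exactly all the (necessarily height-$h$) minimal primes of $J$. Hence both formulas may be written over the same finite index set:
\[
e(S/J)=\sum_{\mathfrak p} e(S/\mathfrak p)\,\lambda(S_{\mathfrak p}/J_{\mathfrak p}), \qquad e(S/I)=\sum_{\mathfrak p} e(S/\mathfrak p)\,\lambda(S_{\mathfrak p}/I_{\mathfrak p}),
\]
the sums ranging over the minimal primes $\mathfrak p$ of $J$.

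Next I would compare the sums term by term. Fix such a $\mathfrak p$; then $\mathfrak p S_{\mathfrak p}$ is the maximal ideal of $S_{\mathfrak p}$ and both $S_{\mathfrak p}/J_{\mathfrak p}$ and $S_{\mathfrak p}/I_{\mathfrak p}$ have finite length. The inclusion $J_{\mathfrak p}\subseteq I_{\mathfrak p}$ yields a surjection $S_{\mathfrak p}/J_{\mathfrak p}\twoheadrightarrow S_{\mathfrak p}/I_{\mathfrak p}$, so $\lambda(S_{\mathfrak p}/I_{\mathfrak p})\le\lambda(S_{\mathfrak p}/J_{\mathfrak p})$. Since each $e(S/\mathfrak p)$ is a positive integer and the two total sums agree by hypothesis, every one of these length inequalities must in fact be an equality. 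A surjection of finite-length modules with equal lengths is an isomorphism, so $J_{\mathfrak p}=I_{\mathfrak p}$ for every minimal prime $\mathfrak p$ of $J$. To finish, I would consider the $S$-module $I/J$, a submodule of $S/J$, so that $\operatorname{Ass}(I/J)\subseteq\operatorname{Ass}(S/J)$. By unmixedness $\operatorname{Ass}(S/J)$ consists precisely of the minimal primes of $J$, and for each such $\mathfrak p$ we have just shown $(I/J)_{\mathfrak p}=I_{\mathfrak p}/J_{\mathfrak p}=0$. If $I/J$ were nonzero it would have an associated prime $\mathfrak p$, a minimal prime of $J$ with $(I/J)_{\mathfrak p}\ne 0$, a contradiction; hence $I/J=0$ and $I=J$.

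I expect the genuinely essential step — and the only place where unmixedness is indispensable — to be this last globalization: the localization argument only controls $J$ and $I$ at the minimal primes, and it is precisely the absence of embedded associated primes of $J$ that upgrades ``equal at every minimal prime'' to the global equality $J=I$. The bookkeeping in the first step (arranging that primes not containing $I$ contribute zero rather than being dropped from the sum) is routine but should be carried out carefully so that the two Associativity Formulas really are indexed over the same set.
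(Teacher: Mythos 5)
Your proof is correct and complete: the reduction of both associativity formulas to a common index set, the term-by-term length comparison, and the use of unmixedness (so that $\operatorname{Ass}(S/J)$ consists only of the minimal primes of $J$) to upgrade the local equalities $J_{\mathfrak p}=I_{\mathfrak p}$ to $I=J$ are all carried out without gaps. Note that the paper itself does not prove this lemma but quotes it from Engheta \cite[Lemma 8]{E07}, and your argument is essentially that standard one; the only cosmetic difference is in the final globalization, where Engheta contracts primary components, concluding $I\subseteq\bigcap_{\mathfrak p}(I_{\mathfrak p}\cap S)=\bigcap_{\mathfrak p}(J_{\mathfrak p}\cap S)=J$, while you argue via $\operatorname{Ass}(I/J)\subseteq\operatorname{Ass}(S/J)$ and vanishing of $I/J$ at every minimal prime.
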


\subsection*{Linkage}
Two ideals $J$ and $K$ in a regular ring $R$ are said to be {\it linked} $J \sim K$, if there exists a regular sequence $\underline{\alpha}= \alpha_1, \ldots \alpha_g$ such that $K = (\underline{\alpha}):J$ and $J = (\underline{\alpha}):K$. Notice that the definition forces $J$ and $K$ to be unmixed, and $(\underline{\alpha}) \subset J \cap K$.\\
 
 The following results on linkage are needed.
\begin{theorem} \label{PS} (Peskine-Szpiro \cite{PS74}) Let $J$ be an unmixed ideal of $S$ of height $g$. Let $\underline{\alpha}= \alpha_1, \ldots, \alpha_g $ be a regular sequence in $J$, and set $K = (\underline \alpha):J $. Then one has
\begin{enumerate} 
\item $J = (\underline \alpha):K $, that is, $J \sim K $ via $\underline{\alpha}$;
\item $S/J$ is CM if and only if $S/K$ is CM;
\item $e(S/J)+e(S/K) = e(S/(\underline \alpha))$
\end{enumerate}
\end{theorem}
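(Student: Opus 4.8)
The plan is to reduce everything to the Gorenstein quotient $R := S/(\underline\alpha)$ and then exploit duality over $R$. Since $S$ is regular and $\underline\alpha$ is a regular sequence of length $g$, the ring $R$ is a complete intersection, hence Cohen--Macaulay and Gorenstein of dimension $\dim S - g$. Writing $\bar J := J/(\underline\alpha)$ and $\bar K := K/(\underline\alpha)$, the defining relation $K=(\underline\alpha):J$ becomes $\bar K = (0:_R \bar J) = \operatorname{Hom}_R(R/\bar J, R)$. The first thing I would record is that such annihilators are automatically unmixed of height $g$: since $\operatorname{Ass}_R \operatorname{Hom}_R(N,R) = \operatorname{Supp}_R N \cap \operatorname{Ass} R$ and $\operatorname{Ass} R$ consists of the minimal primes of $(\underline\alpha)$, all of height $g$ (as $R$ is CM), every associated prime of $\bar K$ has height $g$ in $S$. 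Thus $K$ — and more generally the colon of any height-$g$ unmixed ideal containing $(\underline\alpha)$ — is unmixed of height $g$.

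For part (3) I would argue purely with lengths and the associativity formula. Localizing at a minimal prime $\mathfrak p$ of $(\underline\alpha)$, the ring $A := R_{\mathfrak p}$ is Artinian Gorenstein, so Matlis self-duality gives $\lambda(0:_A \bar J_{\mathfrak p}) = \lambda(A/\bar J_{\mathfrak p})$; rearranging this yields the pointwise identity
\[
\lambda(S_{\mathfrak p}/J_{\mathfrak p}) + \lambda(S_{\mathfrak p}/K_{\mathfrak p}) = \lambda(S_{\mathfrak p}/(\underline\alpha)_{\mathfrak p}),
\]
valid even when $\mathfrak p$ fails to contain $J$ (then both sides reduce correctly). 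Multiplying by $e(S/\mathfrak p)$, summing over the minimal primes of $(\underline\alpha)$, and invoking the associativity formula for the unmixed ideals $J$, $K$ and $(\underline\alpha)$ gives $e(S/J)+e(S/K)=e(S/(\underline\alpha))$. Crucially, this computation never uses part (1), so it applies verbatim to the colon $(\underline\alpha):K$.

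Part (1) then follows cleanly. One always has $J \subseteq (\underline\alpha):K =: J'$ because $JK \subseteq (\underline\alpha)$, and by the unmixedness observation $J'$ is unmixed of height $g$. Applying the multiplicity computation to the colon link $J' = (\underline\alpha):K$ gives $e(S/K)+e(S/J')=e(S/(\underline\alpha))$, whence $e(S/J')=e(S/J)$. Now $J \subseteq J'$, both are unmixed of height $g$, and their multiplicities agree, so Lemma \ref{equalideals} forces $J=J'$, i.e. $J=(\underline\alpha):K$. For part (2) I would pass to homological duality: the change-of-rings isomorphism for the regular sequence $\underline\alpha$ identifies the canonical module $\omega_{S/J} \cong \operatorname{Ext}^g_S(S/J,S) \cong \operatorname{Hom}_R(R/\bar J,R) = \bar K$ (up to a twist), and symmetrically $\omega_{S/K}\cong \bar J$. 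Since $R$ is Gorenstein, $S/J=R/\bar J$ is CM iff it is a maximal Cohen--Macaulay (MCM) $R$-module iff $\operatorname{Ext}^{i}_R(R/\bar J,R)=0$ for all $i>0$. Dualizing $0\to \bar J \to R \to R/\bar J \to 0$ and using the depth lemma, when $R/\bar J$ is MCM the module $\bar J$ is MCM, its higher Ext into $R$ vanishes, and the dual sequence collapses to $R/\bar K \cong \operatorname{Hom}_R(\bar J,R)$; the $R$-dual of an MCM module over a Gorenstein ring is again MCM, so $S/K=R/\bar K$ is CM. The reverse implication is the same argument with $J$ and $K$ interchanged, which is legitimate precisely because part (1) has already established the symmetry of the link.

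I expect the main obstacle to be the homological bookkeeping in part (2): correctly invoking the change-of-rings identification of the canonical module, the vanishing $\operatorname{Ext}^i_R(R/\bar J,R)=0$ for MCM modules over the Gorenstein ring $R$, and the stability of the MCM property under $R$-duality. By contrast, parts (1) and (3) are comparatively routine once the unmixedness of colons and the Artinian Gorenstein length identity are in hand, since Lemma \ref{equalideals} and the associativity formula do the closing work.
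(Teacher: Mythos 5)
The paper offers no proof of this statement to compare against: Theorem \ref{PS} is quoted directly from Peskine--Szpiro \cite{PS74} as a known result, so your argument can only be judged on its own terms. Its architecture is the standard linkage-theoretic one, and it is essentially sound: pass to the Gorenstein complete intersection $R=S/(\underline\alpha)$; prove (3) by Matlis duality over the Artinian Gorenstein localizations $R_{\mathfrak p}$ at the minimal primes of $(\underline\alpha)$ combined with the associativity formula; deduce (1) from (3) applied to the second colon together with Lemma \ref{equalideals}; and prove (2) by dualizing $0\to\bar J\to R\to R/\bar J\to 0$ into $R$, using the Ext-vanishing characterization of MCM modules over a Gorenstein ring and the fact that $R$-duals of MCM modules are MCM. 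The observation that the reverse implication in (2) legitimately swaps $J$ and $K$ because (1) is already proved is also correct.

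There is, however, one genuinely flawed step: your ``unmixedness observation.'' You compute $\operatorname{Ass}_R\operatorname{Hom}_R(R/\bar J,R)=\operatorname{Supp}_R(R/\bar J)\cap\operatorname{Ass}R$ and conclude that $K$ is unmixed. But this computes the associated primes of the \emph{submodule} $\bar K\subset R$, which lie in $\operatorname{Ass}R$ automatically for every ideal of $R$, and it says nothing about unmixedness of $K$, which is a statement about $\operatorname{Ass}(S/K)=\operatorname{Ass}_R(R/\bar K)$ --- associated primes of a submodule and of the corresponding quotient are different things. The correct one-line repair uses the same formula with $N=\bar J$: since $\bar K=0:_R\bar J$ is the kernel of the natural map $R\to\operatorname{Hom}_R(\bar J,R)$, one gets an embedding $R/\bar K\hookrightarrow\operatorname{Hom}_R(\bar J,R)$, hence $\operatorname{Ass}(R/\bar K)\subseteq\operatorname{Supp}_R(\bar J)\cap\operatorname{Ass}R$, and these are minimal primes of $(\underline\alpha)$, all of height $g$. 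Fortunately the slip is not load-bearing for your main chain: Lemma \ref{equalideals} only requires the \emph{smaller} ideal $J$ to be unmixed, so (1) goes through once you know $\operatorname{ht}((\underline\alpha):K)=g$ and $e(S/((\underline\alpha):K))=e(S/J)$, and your proof of (3) never used unmixedness of $K$. What you should add instead is a word on the heights and the degenerate case: if $J=(\underline\alpha)$ then $K=S$ and all three assertions are trivial; otherwise the pointwise length identity shows some minimal prime of $(\underline\alpha)$ contains $K$ (and likewise $(\underline\alpha):K$), which is exactly what guarantees $\operatorname{ht}(K)=\operatorname{ht}((\underline\alpha):K)=g$ and licenses applying the associativity formula to them as height-$g$ ideals.
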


If $(\underline{\alpha})$ is a regular sequence of maximal length in an ideal $I$, then $(\underline{\alpha}):I=(\underline{\alpha}):I^{un}$ that is $I^{un}$ is linked to $(\underline{\alpha}):I$.

\begin{lemma} \cite[Lemma 2.6]{EB05}  \label{linked-ideals=pdim} Let $R$ be a Gorenstein local ring and let $I \subset R$ be an unmixed ideal. All ideals which are linked to $I$ have the same (finite or infinite) projective dimension. 
\end{lemma}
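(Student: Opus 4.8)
The plan is to show that for any direct link $K=(\underline{\alpha}):I$ the projective dimension $\projdim_R(R/K)$ is governed by a single invariant attached to $I$ alone, namely the module $\omega:=\operatorname{Ext}^g_R(R/I,R)$ with $g=\height(I)$. Since $\omega$ makes no reference to the regular sequence $\underline{\alpha}$, this forces every ideal linked to $I$ to have the same $\projdim_R(R/K)$, and hence the same $\projdim_R(K)=\projdim_R(R/K)-1$.

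First I would fix a regular sequence $\underline{\alpha}=\alpha_1,\dots,\alpha_g\subseteq I$ and set $A=R/(\underline{\alpha})$. As $\underline{\alpha}$ is regular, the Koszul complex resolves $A$, so $\projdim_R A=g$, and using that $R$ is Gorenstein one has $\operatorname{Ext}^j_R(A,R)=0$ for $j\neq g$ and $\operatorname{Ext}^g_R(A,R)\cong A$. The defining relation $K=(\underline{\alpha}):I$ says precisely that $K/(\underline{\alpha})=(0:_A\bar I)=\operatorname{Hom}_A(R/I,A)$, where $\bar I=I/(\underline{\alpha})$. Feeding the Koszul vanishing into the change-of-rings (Rees) isomorphism then gives
$$K/(\underline{\alpha})\;\cong\;\operatorname{Hom}_A(R/I,A)\;\cong\;\operatorname{Ext}^g_R(R/I,R)=\omega.$$
The crucial point is that the left-hand side depends on the link while the right-hand side does not, so the isomorphism type of the $R$-module $K/(\underline{\alpha})$ is independent of $\underline{\alpha}$.

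Next I would transfer this to projective dimensions via the short exact sequence $0\to K/(\underline{\alpha})\to A\to R/K\to 0$. Applying $\Tor^R_\bullet(-,k)$ and using $\projdim_R A=g$, the long exact sequence degenerates in high homological degree to isomorphisms $\Tor^R_i(R/K,k)\cong\Tor^R_{i-1}(\omega,k)$ for all $i\ge g+2$. Hence $\projdim_R(R/K)$ is infinite exactly when $\projdim_R\omega$ is, and when $\projdim_R\omega=p<\infty$ a depth count via Auslander--Buchsbaum (noting $\omega$ is an $R/I$-module, so $\operatorname{depth}\omega\le\dim R-g$) gives $p\ge g$, whence $\projdim_R(R/K)=p+1$ as soon as $p\ge g+1$. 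In each of these regimes the value depends only on $\omega$ and $g$, i.e. only on $I$.

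The main obstacle is the boundary case $\projdim_R\omega=g$, that is, when $R/I$ is Cohen--Macaulay: here the $\Tor$ computation alone only confines $\projdim_R(R/K)$ to $\{g,g+1\}$, since the connecting map $\Tor^R_g(\omega,k)\to\Tor^R_g(A,k)$ could a priori have nonzero kernel. I would resolve this using Theorem \ref{PS}(2): as $R/I$ is Cohen--Macaulay, so is its link $R/K$; and since the $\Tor$ isomorphisms already force $\projdim_R(R/K)\le g+1<\infty$, the Auslander--Buchsbaum formula (with $\operatorname{depth} R=\dim R$ for the Gorenstein ring $R$ and $\operatorname{depth}(R/K)=\dim R-g$) pins $\projdim_R(R/K)=g$ exactly. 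With this case settled, $\projdim_R(R/K)$ is in every case a function of the $\underline{\alpha}$-independent data $(\omega,g)$, so any two ideals linked to $I$ have the same projective dimension.
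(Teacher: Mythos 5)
The paper itself contains no proof of this lemma (it is quoted from Engheta's thesis), so your argument has to stand on its own. Its skeleton is sound, and it is in fact the standard route: the identification $K/(\underline{\alpha})\cong\operatorname{Hom}_A(R/I,A)\cong\operatorname{Ext}^g_R(R/I,R)=\omega$ via Rees's change-of-rings isomorphism is correct, and your $\Tor$ computation correctly settles every case with $\projdim_R\omega\geq g+1$, including $\projdim_R\omega=\infty$. The genuine gap is your identification of the boundary case: ``$\projdim_R\omega=g$, that is, when $R/I$ is Cohen--Macaulay.'' Only one implication is true there: if $R/I$ is CM then $\omega$ is maximal Cohen--Macaulay over $R/I$ and $\projdim_R\omega=g$. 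The converse is false, and your resolution of the boundary case rests on it. Concretely, take $R=k[[x,y,u,v]]$ and $I=(x,y)\cap(u,v)$: this is unmixed of height $g=2$ with $\operatorname{depth}(R/I)=1$, so $R/I$ is not CM; yet dualizing $0\to R/I\to R/(x,y)\oplus R/(u,v)\to R/\mathfrak{m}\to 0$ gives $\omega=\operatorname{Ext}^2_R(R/I,R)\cong R/(x,y)\oplus R/(u,v)$, whence $\projdim_R\omega=2=g$. For this $I$ your boundary-case conclusion fails outright: the link $K=(xu,yv):I=(x,v)\cap(y,u)$ has $\projdim_R(R/K)=3=g+1$, not $g$. (The lemma itself is not threatened---every link of this $I$ has projective dimension $3$---but your derivation asserts they all have projective dimension $2$.)

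The repair is short and uses the linkage symmetry you already invoke, just pointed in the right direction. In the boundary case your $\Tor$ isomorphisms do give $\projdim_R(R/K)\in\{g,g+1\}$, in particular finiteness. Since $K$ is unmixed of height $g$ and $R$ is Cohen--Macaulay, $\dim(R/K)=\dim R-g$, so Auslander--Buchsbaum yields: $\projdim_R(R/K)=g$ if and only if $R/K$ is Cohen--Macaulay. Because $I$ is unmixed, Theorem \ref{PS}(i) makes $I$ and $K$ genuinely linked, and Theorem \ref{PS}(ii) then says $R/K$ is Cohen--Macaulay if and only if $R/I$ is. The latter is a condition on $I$ alone, independent of $\underline{\alpha}$; hence in the boundary case either every link has projective dimension $g$ (when $R/I$ is CM) or every link has projective dimension $g+1$ (when it is not). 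Substituting this for your false equivalence closes the gap; everything else in your proof goes through verbatim, and the projective dimension of any link is then a function of the link-independent data $(\omega,g,\hbox{CM-ness of }R/I)$.
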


\begin{lemma} \label{pdim-linkage}  \cite[Theorem 7]{E07} 
Let $J$ be an almost complete intersection ideal of $S$. If $K$ is an ideal linked to $J^{un}$, then $pd (S/J) \leq pd(S/K)+1$
\end{lemma}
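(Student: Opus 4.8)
The plan is to reduce to one conveniently chosen link and then exploit the defining short exact sequence of an almost complete intersection. By Lemma~\ref{linked-ideals=pdim}, every ideal linked to $J^{un}$ has the same projective dimension, so it suffices to establish the bound for a single link. Since $S$ is Cohen--Macaulay and $g := \height(J) = \height(J^{un})$, the ideal $J$ contains a regular sequence $\underline{\alpha} = \alpha_1, \ldots, \alpha_g$ of maximal length; write $\mathfrak{a} = (\underline{\alpha})$. By the remark preceding Lemma~\ref{linked-ideals=pdim}, the ideal $K_0 := \mathfrak{a} : J = \mathfrak{a} : J^{un}$ is linked to $J^{un}$ via $\underline{\alpha}$, so $\projdim(S/K_0) = \projdim(S/K)$, and I may replace $K$ by $K_0$ throughout.

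Next I would use the almost complete intersection hypothesis. After a generic choice of the $\alpha_i$ among general combinations of the $g+1$ generators of $J$, one has $J = \mathfrak{a} + (f)$ for a single element $f$. Passing to $\bar S := S/\mathfrak{a}$, the module $J/\mathfrak{a}$ is the cyclic $\bar S$-module generated by the image $\bar f$, whose annihilator in $\bar S$ is exactly $K_0/\mathfrak{a}$; hence multiplication by $f$ gives an isomorphism $S/K_0 \xrightarrow{\sim} J/\mathfrak{a}$ of $S$-modules. Combining this with the canonical sequence $0 \to J/\mathfrak{a} \to S/\mathfrak{a} \to S/J \to 0$ yields the short exact sequence
$$0 \longrightarrow S/K_0 \longrightarrow S/\mathfrak{a} \longrightarrow S/J \longrightarrow 0.$$

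The standard projective-dimension estimate for this sequence gives $\projdim(S/J) \leq \max\{\projdim(S/K_0) + 1,\ \projdim(S/\mathfrak{a})\}$, and since $\mathfrak{a}$ is a complete intersection of $g$ elements, its Koszul resolution gives $\projdim(S/\mathfrak{a}) = g$. It therefore remains to verify that the complete-intersection term does not dominate, i.e. that $\projdim(S/K_0) \geq g - 1$. For this I would invoke the Auslander--Buchsbaum formula: linkage preserves height, so $K_0$ is unmixed of height $g$ and $\dim(S/K_0) = \dim S - g$; since $\operatorname{depth}(S/K_0) \leq \dim(S/K_0)$, we obtain $\projdim(S/K_0) = \dim S - \operatorname{depth}(S/K_0) \geq g$, which is more than enough. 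Substituting back, the maximum is realized by the first term and we conclude $\projdim(S/J) \leq \projdim(S/K_0) + 1 = \projdim(S/K) + 1$.

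I expect the only delicate points to be the identification $S/K_0 \cong J/\mathfrak{a}$ as $S$-modules and the verification that $K_0$ genuinely has height $g$ (so that the depth--dimension comparison applies); once the short exact sequence is in hand, the projective-dimension bookkeeping is routine.
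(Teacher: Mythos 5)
Your proposal is correct, and it is essentially the canonical proof of this result: the paper itself gives no proof (it cites \cite[Theorem 7]{E07}), and your argument --- choosing the maximal regular sequence $\underline{\alpha}$ generically among the $g+1$ generators so that $J=(\underline{\alpha})+(f)$, identifying $S/((\underline{\alpha}):J)\cong J/(\underline{\alpha})$ to get the exact sequence $0\to S/((\underline{\alpha}):J)\to S/(\underline{\alpha})\to S/J\to 0$, noting $\projdim(S/((\underline{\alpha}):J))\geq g=\projdim(S/(\underline{\alpha}))$ via Auslander--Buchsbaum, and finally using Lemma~\ref{linked-ideals=pdim} to replace the particular link $(\underline{\alpha}):J$ by the arbitrary link $K$ --- is precisely Engheta's. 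All the delicate points you flag (properness and unmixedness of height $g$ for $(\underline{\alpha}):J$, which follow since $J\neq(\underline{\alpha})$ for a minimally $(g+1)$-generated ideal and since $S/((\underline{\alpha}):f)$ embeds in the unmixed $S/(\underline{\alpha})$) indeed go through.
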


\subsection* {Basic Results} 
\begin{theorem} \label{SN} (Samuel \cite{S51}, Nagata \cite[Theorem 40.6]{N75})
Let $J$ be a homogeneous unmixed ideal of $S$. If $e(S/J)=1$,  then J is generated by
ht$(J)$ linear forms.
\end{theorem}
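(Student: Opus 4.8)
The plan is to reduce the statement to the case of a prime ideal and then invoke the fact that degree‑one (minimal‑degree) varieties are linear.

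\emph{Reduction to a prime.} First I would feed $J$ into the Associativity Formula. Since $J$ is unmixed of height $h=\height(J)$, its associated primes are exactly its minimal primes, all of height $h$, so the formula reads $1=e(S/J)=\sum_{\mathfrak q}e(S/\mathfrak q)\,\lambda(S_{\mathfrak q}/J_{\mathfrak q})$, the sum over minimal primes $\mathfrak q\supset J$ of height $h$. Each $e(S/\mathfrak q)$ is a positive integer, and each length $\lambda(S_{\mathfrak q}/J_{\mathfrak q})$ is a positive integer as well (it is finite because $\mathfrak q$ is minimal over $J$, and positive because $J_{\mathfrak q}\subseteq \mathfrak q S_{\mathfrak q}\subsetneq S_{\mathfrak q}$). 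A sum of positive integers equal to $1$ forces a single minimal prime $\mathfrak p\supset J$, with $e(S/\mathfrak p)=1$ and $\lambda(S_{\mathfrak p}/J_{\mathfrak p})=1$. Now $\mathfrak p$ contains $J$, satisfies $\height(\mathfrak p)=h=\height(J)$ and $e(S/\mathfrak p)=1=e(S/J)$, so Lemma \ref{equalideals} forces $J=\mathfrak p$. Hence it suffices to treat a homogeneous prime $\mathfrak p$ of height $h$ with $e(S/\mathfrak p)=1$ and show it is generated by $h$ linear forms.

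\emph{The prime case.} Set $A=S/\mathfrak p$, a standard graded domain of dimension $d=n-h$ with $e(A)=1$. Geometrically $X=\mathrm{Proj}(A)\subset\mathbb P(S_1)$ is an irreducible variety of dimension $d-1$ and degree $e(A)=1$, and I would argue it is a linear subspace. The key input is the classical minimal‑degree inequality: a nondegenerate irreducible variety in $\mathbb P^N$ over an algebraically closed field has degree at least $\codim+1$ (proved, e.g., by generic hyperplane sections reducing to points in general position). On the algebra side this says $e(A)\ge \dim_k A_1-\dim A+1=\codim A+1$. Since $e(A)=1$ we get $\dim_k A_1\le d$; as $A$ is generated by $A_1$ one always has $\dim A\le \dim_k A_1$, so in fact $\dim_k A_1=d$.

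\emph{Conclusion.} From the degree‑one part of the presentation, the sequence $0\to\mathfrak p_1\to S_1\to A_1\to 0$ gives $\dim_k\mathfrak p_1=n-\dim_k A_1=n-d=h$. I would then pick a basis $\ell_1,\dots,\ell_h$ of $\mathfrak p_1$; these are $h$ linearly independent linear forms with $(\ell_1,\dots,\ell_h)\subseteq\mathfrak p$. Since $S/(\ell_1,\dots,\ell_h)$ is a polynomial ring of dimension $n-h=d=\dim(S/\mathfrak p)$, the induced surjection $S/(\ell_1,\dots,\ell_h)\twoheadrightarrow S/\mathfrak p$ is a map of domains of equal dimension, hence injective, so $\mathfrak p=(\ell_1,\dots,\ell_h)$ is generated by $h$ linear forms. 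The main obstacle is entirely the prime case, namely the minimal‑degree bound $e(A)\ge\codim A+1$ (equivalently, that an irreducible variety of degree one is a linear space); the reduction and the final dimension count are routine bookkeeping with the Associativity Formula and Lemma \ref{equalideals}.
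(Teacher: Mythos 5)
Your proof is correct, but there is no internal proof to compare it with: the paper quotes Theorem \ref{SN} as a classical result of Samuel \cite{S51} and Nagata \cite{N75} and never proves it. What your argument shows is that the theorem is a formal consequence of the other preliminaries the paper also quotes: the reduction step uses the Associativity Formula and Lemma \ref{equalideals} exactly as stated there (alternatively, once you have $\lambda(S_{\mathfrak p}/J_{\mathfrak p})=1$ you can skip Lemma \ref{equalideals}, since an unmixed ideal whose only associated prime is $\mathfrak p$ is $\mathfrak p$-primary, hence $J=J_{\mathfrak p}\cap S=\mathfrak p S_{\mathfrak p}\cap S=\mathfrak p$), and the prime case is exactly Proposition \ref{mult-height} in disguise. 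One point you should make explicit: Proposition \ref{mult-height} is stated for primes of the fixed ring $S$, whereas you invoke the inequality $e(A)\ge \dim_k A_1-\dim A+1$ for the nondegenerate presentation of $A=S/\mathfrak p$ as a quotient of the polynomial ring $\mathrm{Sym}(A_1)$ in $m=\dim_k A_1$ variables; this is harmless because multiplicity is intrinsic to the graded ring $A$, but it is a re-embedding and deserves a sentence. An equivalent route that uses the proposition verbatim is induction on $\height(\mathfrak p)$: if $\mathfrak p$ were nondegenerate then $e(S/\mathfrak p)\ge \height(\mathfrak p)+1\ge 2$, a contradiction, so $\mathfrak p$ contains a linear form $\ell$, and one passes to the polynomial ring $S/(\ell)$, where the image of $\mathfrak p$ is prime of one smaller height and the same multiplicity. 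Your final dimension count (a surjection of affine domains of equal dimension is injective, because a proper quotient of an affine domain has strictly smaller dimension) is sound, as is the edge case $\dim A=0$, where the argument degenerates gracefully to $\mathfrak p=(x_1,\dots,x_n)$. In short: a valid, essentially self-contained proof of a statement the paper treats as a black box, built from the same toolkit (Proposition 2.1, Lemma \ref{equalideals}, Proposition \ref{mult-height}) that the paper assembles in its preliminaries.
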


A  homogeneous ideal $I$ is called degenerate if $I$ contains at least one linear form, otherwise $I$ is said to be non-degenerate. We need the next result on a lower bound for the multiplicity of non-degenerate prime ideals.
\begin{proposition} \label{mult-height} \cite[Corollary 18.12]{H92} Let $\mathfrak p$ be a homogeneous prime ideal of $S$. If $\mathfrak p$ is a non-degenerate prime ideal, then $e(S/\mathfrak p) \geq ht (\mathfrak p)+1$
\end{proposition}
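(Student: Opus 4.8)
The plan is to translate the statement into projective geometry and then prove the corresponding \emph{minimal degree} bound by induction on dimension via hyperplane sections. Write $S=k[x_0,\dots,x_n]$ and let $V=\operatorname{Proj}(S/\mathfrak p)\subseteq \mathbb{P}^n$. Since $\mathfrak p$ is prime and $k$ is algebraically closed, $V$ is integral; it is non-degenerate exactly when $\mathfrak p$ contains no linear form, i.e. when $V$ lies on no hyperplane. Under this dictionary $\height(\mathfrak p)=\codim V=:c$ and the multiplicity $e(S/\mathfrak p)$ equals the degree $\deg V$, so the statement becomes: an integral non-degenerate $V\subseteq\mathbb P^n$ of codimension $c$ satisfies $\deg V\ge c+1$. (When $c=1$ this is already immediate from Theorem \ref{SN}, since a non-degenerate prime contains no linear form and hence cannot have $e=1$; the argument below handles all $c$ uniformly.)

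First I would record the base case: a zero-dimensional subscheme $Z\subseteq\mathbb P^c$ that is non-degenerate has $\deg Z\ge c+1$. Non-degeneracy says the restriction $H^0(\mathbb P^c,\mathcal O(1))\to H^0(Z,\mathcal O_Z(1))$ is injective, so $h^0(\mathcal O_Z(1))\ge c+1$; but for a finite scheme $h^0(\mathcal O_Z(1))=\operatorname{length}(Z)=\deg Z$, giving the claim. Geometrically this is just the remark that one needs at least $c+1$ points to span $\mathbb P^c$.

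The engine of the induction is the lemma that a general hyperplane section of an integral non-degenerate variety is again non-degenerate, of the same degree. Degree preservation, $\deg(V\cap H)=\deg V$ for general $H$, is standard. For non-degeneracy I would argue cohomologically: choosing $H$ so that its equation $h$ is a nonzerodivisor on $V$, the sequence
$$0 \to \mathcal{O}_V \xrightarrow{\,h\,} \mathcal{O}_V(1) \to \mathcal{O}_{V\cap H}(1) \to 0$$
shows that any linear form $\ell$ vanishing on $V\cap H$ restricts on $V$ to an element of the image of $H^0(\mathcal O_V)\xrightarrow{\,h\,}H^0(\mathcal O_V(1))$. As $V$ is integral, $H^0(\mathcal O_V)=k$, so $\ell|_V=\lambda\, h|_V$ for a scalar $\lambda$; then $\ell-\lambda h$ is a linear form vanishing on $V$, which by non-degeneracy is zero, i.e. $\ell\equiv 0$ on $H$. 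Hence the linear forms of $H$ restrict injectively to $V\cap H$, which is exactly non-degeneracy of $V\cap H$ inside $H\cong\mathbb P^{n-1}$.

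Finally I would run the induction on $\dim V$. Taking general hyperplane sections, Bertini keeps the successive sections integral while $\dim\ge 2$, the lemma propagates non-degeneracy at every step (including the last, applied to the resulting integral curve), and the degree stays equal to $\deg V$ while the codimension is preserved. After $\dim V$ sections one reaches a zero-dimensional non-degenerate scheme in $\mathbb P^{c}$ of degree $\deg V$, and the base case yields $\deg V\ge c+1$. The main obstacle is precisely this section lemma together with the bookkeeping that the sections remain integral: the cohomological injectivity is clean but relies on $H^0(\mathcal O_V)=k$, so one must invoke Bertini irreducibility to keep the intermediate sections integral down to a curve (delicate in positive characteristic, though the inequality itself is characteristic-free). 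Everything else is routine once degree- and non-degeneracy-preservation are in hand.
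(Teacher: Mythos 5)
You should note at the outset that the paper contains no proof of this proposition: it is quoted verbatim with the citation \cite[Corollary 18.12]{H92}, so the only argument to compare yours with is the classical one in that source --- and your proof is essentially that proof. The dictionary (prime ideal $\leftrightarrow$ integral $V=\operatorname{Proj}(S/\mathfrak p)$, height $\leftrightarrow$ codimension, multiplicity $\leftrightarrow$ degree, non-degeneracy $\leftrightarrow$ lying on no hyperplane), the reduction to dimension zero by successive general hyperplane sections, and the base case that a finite non-degenerate subscheme of $\mathbb P^c$ has length at least $c+1$ are exactly the skeleton of the textbook argument. Each step you give is correct: degree preservation and the base case follow from the Hilbert-polynomial and length identities you state, your parenthetical remark that $c=1$ already follows from Theorem \ref{SN} is right, and your cohomological lemma propagating non-degeneracy to $V\cap H$ (using only $H^0(\mathcal O_V)=k$ and that $h$ is a nonzerodivisor) is clean, correct, and characteristic-free --- in fact tidier than the general-position arguments of \cite{H92}, which require characteristic $0$.

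The one genuine delicacy is the one you flag yourself: iterating the lemma requires the intermediate sections to stay integral (so that $H^0(\mathcal O)=k$ persists), i.e.\ a Bertini input. In characteristic $0$ this is standard and your proof is complete; in characteristic $p$, irreducibility of general sections is still fine (Bertini irreducibility holds in all characteristics), but reducedness of the sections is the subtle point, so strictly speaking your proof --- exactly like the cited corollary in Harris, whose book works in characteristic $0$ --- does not by itself cover the characteristic-$p$ case of the statement, and the paper imposes no characteristic hypothesis. If you want a characteristic-free argument, the standard device is to replace hyperplane sections by inner projections and induct on the codimension $c$: for a general closed point $p\in V$ (note $V$ is not a linear space once $c\ge 1$), projection from $p$ maps $V$ onto an integral non-degenerate variety $V'\subseteq\mathbb P^{n-1}$ of codimension $c-1$, and the degree formula for projection from a point of $V$ gives $\deg V \ge \deg(\pi_p)\deg V'+\operatorname{mult}_p V \ge \deg V'+1 \ge c+1$ by induction. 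This avoids Bertini altogether, uses only that images of integral schemes are integral, and proves the proposition over any algebraically closed field.
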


\begin{corollary}\label{prime} Let $\mathfrak p$ be a homogeneous prime ideal of $S$ of height four. If $e(S/ \mathfrak p)=2$, then there exists linear forms $x$, $y$, $z$  and a quadric $q$ such that $\mathfrak p = (x,y,z,q)$
\end{corollary}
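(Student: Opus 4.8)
The plan is to first show that $\mathfrak p$ cannot be non-degenerate, and then to strip off linear forms one at a time, lowering the height while keeping the multiplicity fixed, until the image of $\mathfrak p$ becomes a height one prime in a polynomial ring; such a prime is principal, and I will argue that its generator is exactly the desired quadric. To begin, since $\height(\mathfrak p)=4$, Proposition~\ref{mult-height} shows that if $\mathfrak p$ were non-degenerate we would have $e(S/\mathfrak p)\ge \height(\mathfrak p)+1=5$, contradicting $e(S/\mathfrak p)=2$. Hence $\mathfrak p$ contains a nonzero linear form $x$. After a linear change of coordinates I may take $x$ to be a variable, so $S/(x)$ is again a polynomial ring and $\overline{\mathfrak p}:=\mathfrak p/(x)$ is a homogeneous prime of height $3$ in it. The key point is that $S/\mathfrak p \cong (S/(x))/\overline{\mathfrak p}$ as graded rings, so the multiplicity is unchanged: $e\bigl((S/(x))/\overline{\mathfrak p}\bigr)=e(S/\mathfrak p)=2$.

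Next I would iterate this reduction. At height $3$ the inequality of Proposition~\ref{mult-height} would force $e\ge 4$ were the image non-degenerate, so it again contains a linear form $y$; peeling $y$ off leaves a height two prime of multiplicity $2$, which by the bound $e\ge 3$ for non-degenerate primes must likewise contain a linear form $z$. Lifting back to $S$, the forms $x,y,z$ are linearly independent, since $x\neq 0$, $y\notin(x)$, and $z\notin(x,y)$. After removing all three we are left with $\widetilde{\mathfrak p}:=\mathfrak p/(x,y,z)$, a homogeneous prime of height one and multiplicity $2$ in the polynomial ring $\widetilde S:=S/(x,y,z)$.

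To finish, I use that $\widetilde S$ is a UFD, so the height one homogeneous prime $\widetilde{\mathfrak p}$ is principal, say $\widetilde{\mathfrak p}=(\bar q)$ with $\bar q$ an irreducible homogeneous form. Since the multiplicity of the quotient of a polynomial ring by a single homogeneous form equals the degree of that form, we get $\deg \bar q = e(\widetilde S/(\bar q))=2$, so $\bar q$ is a quadric. Lifting $\bar q$ to a quadric $q\in S$ and taking preimages under $S\to\widetilde S$ yields $\mathfrak p=(x,y,z,q)$, as claimed.

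I do not expect a genuine obstacle here: the only point requiring care is the invariance of the multiplicity when passing from $S/\mathfrak p$ to $(S/(x))/\overline{\mathfrak p}$, and this is immediate because the ring $S/\mathfrak p$ is literally unchanged by the operation. The one substantive input, rather than a difficulty, is the standard fact that a height one homogeneous prime in a polynomial ring is generated by a single irreducible form whose degree equals the multiplicity of the quotient.
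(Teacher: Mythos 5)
Your proposal is correct and follows essentially the argument the paper intends: the corollary is stated without proof as a direct consequence of Proposition~\ref{mult-height}, and your reduction---repeatedly using that bound to extract linear forms $x,y,z$ and then identifying the residual height-one homogeneous prime in the UFD $S/(x,y,z)$ as principal, generated by an irreducible form whose degree equals the multiplicity $2$---is exactly the standard deduction. All the technical points (invariance of multiplicity under the graded isomorphism $S/\mathfrak p\cong (S/(x))/\overline{\mathfrak p}$, the height dropping by one at each step, and the degree of the principal generator) are handled correctly.
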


The following consequence of a result of Ananyan and Hochster \cite [Lemma 3.3]{AH12} is essential to our paper.

\begin{lemma}\label{extend1}\cite[Lemma 3.3]{AH12} Let $f_1 \ldots f_t$ be a regular sequence of forms in $S$ and set $A=k [f_1,\ldots, f_t]$. Then for any ideal $I$ of $S$ extended from $A$, one has $\mbox{pd}(S/I) \leq t$. More generally for any finite $S$-module $M$ presented by a matrix with entries in $A$, one has pd$(M) \leq t$. In particular, any ideal $I$ whose generators can be written in terms of at most $t$ variables satisfies pd$(S/I)\leq t$. 
\end{lemma}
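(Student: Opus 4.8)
The plan is to reduce the entire statement to the single structural fact that $S$ is flat over the subring $A=k[f_1,\ldots,f_t]$, and then to transport free resolutions from $A$ to $S$ by base change. First I would record that, since $f_1,\ldots,f_t$ is a regular sequence of positive-degree forms, these forms are algebraically independent over $k$ (a standard fact for regular sequences of forms), so that $A\cong k[T_1,\ldots,T_t]$ is a polynomial ring in $t$ variables. In particular $A$ has global dimension $t$, so by Hilbert's syzygy theorem every finitely generated $A$-module has projective dimension at most $t$ and admits a finite free $A$-resolution of length at most $t$.

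The crucial step is to prove that $S$ is flat over $A$. Let $\mathfrak m_A=(f_1,\ldots,f_t)A$ be the homogeneous maximal ideal and $k=A/\mathfrak m_A$. Because the $f_i$ are the variables of the polynomial ring $A$, the Koszul complex $K_\bullet(f_1,\ldots,f_t;A)$ is a free resolution of $k$ over $A$; tensoring it with $S$ gives $\Tor_i^A(k,S)=H_i\big(K_\bullet(f_1,\ldots,f_t;S)\big)$. Since $f_1,\ldots,f_t$ is a regular sequence in $S$, this Koszul homology vanishes for $i>0$, so in particular $\Tor_1^A(k,S)=0$. As $S$ is a nonnegatively graded $A$-module with finite-dimensional graded pieces, the graded Nakayama lemma applied to a minimal graded free presentation then forces the first syzygy to vanish, so $S$ is free, hence flat, over $A$. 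This is the heart of the argument, and the only place the regular sequence hypothesis is used.

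With flatness in hand the remaining steps are formal. For an ideal $I=I_0S$ extended from an ideal $I_0\subseteq A$, choose a finite free $A$-resolution $F_\bullet\to A/I_0$ of length $\le t$. Applying the functor $-\otimes_A S$, which is exact by flatness, yields a complex $F_\bullet\otimes_A S$ of free $S$-modules that resolves $(A/I_0)\otimes_A S=S/I_0S=S/I$ and has length $\le t$; hence $\projdim(S/I)\le t$. The general statement is identical in spirit: if $M=\operatorname{coker}(\phi)$ for a matrix $\phi$ with entries in $A$, write $\phi=\phi_0\otimes_A S$ with $\phi_0$ the same matrix viewed over $A$ and set $M_0=\operatorname{coker}(\phi_0)$; right-exactness of tensor gives $M=M_0\otimes_A S$, and tensoring a length-$\le t$ free $A$-resolution of the finite module $M_0$ with the flat algebra $S$ produces a free $S$-resolution of $M$ of length $\le t$, so $\projdim_S M\le t$.

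Finally, the ``in particular'' clause is just a special case: if the generators of $I$ can be written using only $t$ of the variables of $S$, those variables form a regular sequence of linear forms, $I$ is extended from the polynomial subring they generate, and the first part applies. The main obstacle is establishing the flatness of $S$ over $A$; once that is secured, the projective-dimension bounds fall out of base change together with the syzygy theorem over $A$.
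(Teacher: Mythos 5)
Your proof is correct. The paper itself offers no argument for this lemma---it is quoted verbatim from Ananyan--Hochster \cite[Lemma 3.3]{AH12}---and your proof reconstructs essentially the argument of that cited source: one shows $S$ is free (hence faithfully flat) over $A=k[f_1,\ldots,f_t]$, which you do correctly via vanishing of the Koszul homology $H_i(K_\bullet(f_1,\ldots,f_t;S))$ for $i>0$ together with graded Nakayama (correctly invoked in its bounded-below form, since $S$ need not be finitely generated over $A$), and then transports a length-$\le t$ free resolution over the polynomial ring $A$ to $S$ by flat base change.
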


The following Corollary of Lemma  \ref {extend1} applies to almost complete intersections.
\begin{corollary} \label{extend}  \cite[Corollary 3.9]{HMMS13} Let $I$ be an almost complete intersection of height $h$. Suppose $I^{un}$ is extended from  $A =k[f_1, \ldots, f_t]$ where $f_1, \ldots f_t$ form a regular sequence then $pd(S/I) \leq max(h+2,t)$
\end{corollary}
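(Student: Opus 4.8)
The plan is to combine the extension bound of Lemma \ref{extend1} with the linkage bound of Lemma \ref{pdim-linkage}, choosing the link cleverly so that the linked ideal is again extended from $A$. Write $I^{un} = J_A S$ for $J_A = I^{un} \cap A$, where $A = k[f_1,\ldots,f_t]$. Since $f_1,\ldots,f_t$ is a regular sequence, $S$ is faithfully flat over the polynomial subring $A$, and grade is preserved under this base change, so $\operatorname{grade}(J_A, A) = \operatorname{grade}(J_A S, S) = \operatorname{ht}_S I^{un} = h$. As $A$ is Cohen--Macaulay, I can therefore choose a homogeneous regular sequence $\underline{\alpha} = \alpha_1,\ldots,\alpha_h$ lying in $J_A \subseteq A$; by flatness it remains a regular sequence in $S$, it is contained in $I^{un}$, and it has maximal length $h = \operatorname{ht} I^{un}$.

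First I would set $K = (\underline{\alpha}) :_S I^{un}$. Since $\underline{\alpha}$ is a maximal regular sequence in the unmixed ideal $I^{un}$, Theorem \ref{PS} shows $K$ is linked to $I^{un}$, whence by Lemma \ref{pdim-linkage} (applied to the almost complete intersection $I$) we get $\operatorname{pd}(S/I) \le \operatorname{pd}(S/K) + 1$. The key point is that $K$ is again extended from $A$: because $\underline{\alpha} \subseteq A$ and $I^{un} = J_A S$, and because colon ideals commute with the flat extension $A \hookrightarrow S$, one has $K = \big((\underline{\alpha}) :_A J_A\big) S =: K_A S$.

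Next I would bound $\operatorname{pd}(S/K)$. The crude bound $\operatorname{pd}(S/K) \le t$ from Lemma \ref{extend1} already gives $\operatorname{pd}(S/I) \le t+1$, which settles every case with $t \le h+1$, since then $t+1 \le h+2$. For the remaining case $t \ge h+2$ (so in particular $h < t$) I would sharpen it: $K_A = (\underline{\alpha}):_A J_A$ is linked to $J_A^{un}$ inside the regular ring $A$, hence unmixed of height $h$. As $h < t = \dim A$, the maximal homogeneous ideal of $A$ is not associated to $A/K_A$, so $\operatorname{depth}_A(A/K_A) \ge 1$ and Auslander--Buchsbaum gives $\operatorname{pd}_A(A/K_A) \le t-1$. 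Flatness of $S$ over $A$ preserves this, so $\operatorname{pd}(S/K) \le \operatorname{pd}_A(A/K_A) \le t-1$ and therefore $\operatorname{pd}(S/I) \le t$. Combining the two cases yields $\operatorname{pd}(S/I) \le \max(h+2, t)$, in fact the slightly stronger $\max(h+1, t)$.

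The main obstacle is the flat base change bookkeeping in the middle step: verifying that $A \hookrightarrow S$ is flat, that a regular sequence and its grade descend and ascend correctly, and above all that the colon ideal $K$ is genuinely extended from $A$, so that Lemma \ref{extend1} applies to it. Once these standard but essential flatness facts are in place, the unmixedness of the linked ideal $K_A$ and the depth count are routine, and the two Lemmas \ref{extend1} and \ref{pdim-linkage} assemble directly into the stated bound.
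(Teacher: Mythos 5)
Your proposal is correct in substance, but it takes a genuinely different route from the proof behind this statement. The paper does not prove Corollary \ref{extend} itself; it quotes \cite[Corollary 3.9]{HMMS13}, and the argument there is precisely why Lemma \ref{extend1} (in its \emph{module} form) and Proposition \ref{coker} are stated side by side: since $I^{un}$ is extended from $A$ and $S$ is free over $A$, the minimal free resolution of $S/I^{un}$ is the base change of a minimal resolution over $A$, so every differential $\partial_i$ --- and hence the presentation matrix of $\operatorname{Coker}(\partial_{h+1}^*)$ --- has entries in $A$; the module form of Lemma \ref{extend1} gives $\projdim(\operatorname{Coker}(\partial_{h+1}^*))\le t$, and Proposition \ref{coker} then yields $\projdim(S/I)\le\max(h+2,t)$. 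You instead link $I^{un}$ by a regular sequence chosen inside $A$, verify via flat base change of colon ideals that the link $K$ is again extended from $A$, apply the ideal form of Lemma \ref{extend1} together with Lemma \ref{pdim-linkage}, and then shave the $+1$ by an unmixedness/Auslander--Buchsbaum depth count in $A$. Your flatness bookkeeping is correct ($S$ is in fact free over $A$ because the $f_i$ form a homogeneous regular sequence; grade is preserved; and $(\mathfrak{a}:_A J)S=(\mathfrak{a}S:_S JS)$ for finitely generated $J$), and your route buys a marginally stronger bound, $\projdim(S/I)\le\max(h+1,t)$, at the cost of this bookkeeping; the cited route is shorter but needs the module version of Ananyan--Hochster and the dual-differential proposition.

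One point does need patching: your argument breaks down in the degenerate case $I^{un}=(\underline{\alpha})$, i.e., when the maximal regular sequence you chose inside $J_A$ already generates $I^{un}$. Then $K=(\underline{\alpha}):_S I^{un}=S$, and Lemma \ref{pdim-linkage} cannot be invoked (linkage is a relation between proper ideals, and with $K=S$ the inequality $\projdim(S/I)\le\projdim(S/K)+1$ is meaningless); likewise your sharpened step tacitly assumes $K_A\neq A$ when it asserts $K_A$ is unmixed of height $h$. The fix is one line and stays within your framework: replace $\alpha_h$ by $\alpha_h^2$, which is still a homogeneous regular sequence contained in $J_A$ whose ideal is properly contained in $I^{un}$, so the colon is a proper ideal and the rest of the argument (extension from $A$, unmixedness of the link, the depth count) runs verbatim. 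Alternatively, note that in this degenerate case $I^{un}$ is a complete intersection, hence Cohen--Macaulay, so any proper link of it is Cohen--Macaulay by Theorem \ref{PS} and $\projdim(S/I)\le h+1$ follows directly from Lemma \ref{pdim-linkage}.
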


\begin{proposition}\label{coker} \cite[Proposition 3.7]{HMMS13} Let $I$ be an almost complete intersection of height $h$. Let $F_{\bullet}$ be the minimal free resolution of the unmixed part $I^{un}$ of $I$. Let $\partial_i$ denote the $i^{th}$ differential in $F_{\bullet}$. Then
$$\mbox{pd}(S/I) \leq max \{h+2, \mbox{pd}(Coker(\partial^*_{h+1}))$$ 
\end{proposition}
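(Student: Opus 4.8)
The plan is to pass from $I$ to an ideal linked to $I^{un}$, propagate projective dimension through one short exact sequence, and then reduce everything to the projective dimension of a single $\operatorname{Ext}$-module which I finally bound by $\projdim(\operatorname{Coker}(\partial^*_{h+1}))$. I fix conventions by taking $F_\bullet\colon \cdots \to F_1 \to F_0 = S \to S/I^{un} \to 0$ to be the minimal free resolution of $S/I^{un}$, so that $\partial_{h+1}\colon F_{h+1}\to F_h$ and, in the dual complex $F_\bullet^* = \operatorname{Hom}_S(F_\bullet, S)$, $\partial^*_{h+1}\colon F_h^* \to F_{h+1}^*$.

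First I would choose a regular sequence $\underline{\alpha} = \alpha_1,\ldots,\alpha_h$ of maximal length in $I$ and set $K = (\underline{\alpha}):I$. Since $I$ is an almost complete intersection of height $h$, this sequence has length $h$, and $(\underline{\alpha}):I = (\underline{\alpha}):I^{un}$, so $K$ is linked to $I^{un}$ via $\underline{\alpha}$. By Lemma \ref{pdim-linkage}, $\projdim(S/I) \le \projdim(S/K)+1$, and it suffices to control $\projdim(S/K)$. For this I use the short exact sequence $0 \to K/(\underline{\alpha}) \to S/(\underline{\alpha}) \to S/K \to 0$. As $S/(\underline{\alpha})$ is a complete intersection of height $h$, its Koszul resolution gives $\projdim(S/(\underline{\alpha})) = h$, so the sequence yields $\projdim(S/K) \le \max\{h,\, \projdim(K/(\underline{\alpha}))+1\}$, and hence $\projdim(S/I) \le \max\{h+1,\, \projdim(K/(\underline{\alpha}))+2\}$.

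The next step is to identify $K/(\underline{\alpha})$ homologically. Writing $\bar S = S/(\underline{\alpha})$, a Gorenstein ring, Rees's change-of-rings isomorphism for the regular sequence $\underline{\alpha}$ gives, for every finitely generated $\bar S$-module $M$, $\operatorname{Ext}^h_S(M,S) \cong \operatorname{Hom}_{\bar S}(M,\bar S)$ up to a degree twist. Applying this to $M = S/I^{un}$ and using $\operatorname{Hom}_{\bar S}(\bar S/\overline{I^{un}}, \bar S) \cong (0:_{\bar S}\overline{I^{un}}) = K/(\underline{\alpha})$, I obtain $K/(\underline{\alpha}) \cong \operatorname{Ext}^h_S(S/I^{un},S)$ up to twist; in particular $\projdim(K/(\underline{\alpha})) = \projdim(\operatorname{Ext}^h_S(S/I^{un},S))$, so the problem reduces to bounding this last projective dimension.

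Finally I would read $\operatorname{Ext}^h_S(S/I^{un},S) = \ker\partial^*_{h+1}/\operatorname{im}\partial^*_h$ off the dual complex $F_\bullet^*$ and compare it with $\operatorname{Coker}(\partial^*_{h+1})$ through the natural syzygy sequences. Since $\height(I^{un}) = h$ and $S$ is Cohen--Macaulay, $\operatorname{grade}(I^{un}) = h$, so $\operatorname{Ext}^i_S(S/I^{un},S) = 0$ for $i < h$; hence $0 \to F_0^* \to \cdots \to F_{h-1}^* \to \operatorname{im}\partial^*_h \to 0$ is exact and $\projdim(\operatorname{im}\partial^*_h) \le h-1$. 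Writing $Z = \ker\partial^*_{h+1}$ and $M = \operatorname{im}\partial^*_{h+1}$, the sequences $0 \to M \to F_{h+1}^* \to \operatorname{Coker}(\partial^*_{h+1}) \to 0$ and $0 \to Z \to F_h^* \to M \to 0$ exhibit $Z$ as a second syzygy of $\operatorname{Coker}(\partial^*_{h+1})$, while $0 \to \operatorname{im}\partial^*_h \to Z \to \operatorname{Ext}^h_S(S/I^{un},S) \to 0$ bounds the $\operatorname{Ext}$-module by $Z$ and $\operatorname{im}\partial^*_h$. Tracking projective dimensions through these three sequences, and treating separately the case $\operatorname{Coker}(\partial^*_{h+1}) = 0$ (equivalently $S/I^{un}$ Cohen--Macaulay, where $\operatorname{Ext}^h$ is maximal Cohen--Macaulay of projective dimension $h$), yields $\projdim(\operatorname{Ext}^h_S(S/I^{un},S)) \le \max\{h,\, \projdim(\operatorname{Coker}(\partial^*_{h+1}))-2\}$; substituting into the bound of the second paragraph gives $\projdim(S/I) \le \max\{h+2,\, \projdim(\operatorname{Coker}(\partial^*_{h+1}))\}$. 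The hard part will be precisely this non--Cohen--Macaulay bookkeeping: the whole content lies in packaging the higher $\operatorname{Ext}$ of $S/I^{un}$, and I would need to verify that the projective-dimension inequalities through the syzygy sequences remain valid in the degenerate cases where the intermediate modules are free or zero.
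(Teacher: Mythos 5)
The paper itself gives no proof of this proposition---it is quoted directly from \cite{HMMS13}---but your argument is correct and is essentially the standard proof of that result: link $I^{un}$ to $K=(\underline{\alpha}):I$ via a maximal regular sequence, identify $K/(\underline{\alpha})$ with $\operatorname{Ext}^h_S(S/I^{un},S)$ up to twist, and bound its projective dimension by bookkeeping on the dualized resolution. Your decomposition through $\ker\partial^*_{h+1}$ and $\operatorname{im}\partial^*_{h+1}$ is just a repackaging of the four-term exact sequence $0\to\operatorname{Ext}^h_S(S/I^{un},S)\to\operatorname{Coker}(\partial^*_h)\to F^*_{h+1}\to\operatorname{Coker}(\partial^*_{h+1})\to 0$ used in the source, and the degenerate cases you flag (zero or free cokernel, the former forced by minimality to mean $S/I^{un}$ is Cohen--Macaulay) do check out, yielding $\projdim(\operatorname{Ext}^h_S(S/I^{un},S))\le\max\{h,\projdim(\operatorname{Coker}(\partial^*_{h+1}))-2\}$ and hence the stated bound.
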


\begin{lemma}\label{intersection}\cite[Lemma 3.6]{HMMS13} Suppose $I_1, I_2$ are two ideals. Then $$\mbox{pd}(S/(I_1 \cap I_2)) \leq \mbox{max}\{\mbox{pd}(S/I_1),\mbox{pd}(S/I_2), \mbox{pd}(S/(I_1+I_2)-1) \}$$
\end{lemma}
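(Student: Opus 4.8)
The plan is to exhibit $S/(I_1 \cap I_2)$ as the left-hand term of a short exact sequence whose remaining terms have controllable projective dimension, and then to apply the standard estimate for projective dimension along such a sequence. The natural choice is the Mayer--Vietoris sequence
$$0 \to S/(I_1 \cap I_2) \to S/I_1 \oplus S/I_2 \to S/(I_1+I_2) \to 0,$$
in which the first map is the diagonal $\bar x \mapsto (\bar x, \bar x)$ and the second is the difference $(\bar a, \bar b) \mapsto \bar a - \bar b$. First I would check exactness directly. Injectivity of the diagonal map is precisely the statement that an element lying in both $I_1$ and $I_2$ lies in $I_1 \cap I_2$; the composite of the two maps is visibly zero; and surjectivity of the difference map is immediate. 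The one step requiring a short argument is exactness in the middle: if $a - b \in I_1 + I_2$, write $a - b = u + v$ with $u \in I_1$ and $v \in I_2$ and set $x = a - u = b + v$, so that $(\bar a, \bar b) = (\bar x, \bar x)$ lies in the image of the diagonal.

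Next I would recall the general behaviour of projective dimension along a short exact sequence $0 \to A \to B \to C \to 0$, namely $\mbox{pd}(A) \le \max\{\mbox{pd}(B), \mbox{pd}(C) - 1\}$. This is read off from the long exact sequence in $\Tor$: for every module $N$ the portion
$$\cdots \to \Tor_{i+1}(C,N) \to \Tor_i(A,N) \to \Tor_i(B,N) \to \cdots$$
shows that whenever $i > \mbox{pd}(B)$ and $i+1 > \mbox{pd}(C)$ both flanking groups vanish, forcing $\Tor_i(A,N) = 0$. Taking the supremum over $i$ and $N$ gives the inequality.

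Finally I would combine the two ingredients. Since the minimal free resolution of a direct sum is the direct sum of the resolutions, $\mbox{pd}(S/I_1 \oplus S/I_2) = \max\{\mbox{pd}(S/I_1), \mbox{pd}(S/I_2)\}$. Substituting $A = S/(I_1 \cap I_2)$, $B = S/I_1 \oplus S/I_2$ and $C = S/(I_1+I_2)$ into the estimate yields exactly the asserted bound. I do not expect any genuine obstacle here: the whole content is the construction and exactness of the Mayer--Vietoris sequence, and the rest is routine homological bookkeeping. The only point deserving care is the index shift producing $\mbox{pd}(C)-1$, which arises because $\Tor_i(A,N)$ is adjacent to $\Tor_{i+1}(C,N)$, rather than $\Tor_i(C,N)$, in the long exact sequence.
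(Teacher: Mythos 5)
Your proof is correct and is essentially the argument behind the cited result \cite[Lemma 3.6]{HMMS13}: the paper itself gives no proof (it only quotes the lemma), and the standard proof is exactly your Mayer--Vietoris sequence $0 \to S/(I_1 \cap I_2) \to S/I_1 \oplus S/I_2 \to S/(I_1+I_2) \to 0$ combined with the bound $\projdim(A) \leq \max\{\projdim(B), \projdim(C)-1\}$ from the long exact sequence in $\Tor$. All the steps you flag (exactness in the middle, the index shift giving the $-1$) are handled correctly, so there is nothing to add.
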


\section {$(x,y,z,w)$-primary of multiplicity 2}

In this section, we give a characterization of the degree $2$ component of an ideal primary  to the linear prime $\mathfrak{p}=(x,y,z,w)$ of multiplicity $2$, similar to the characterization found by C.~Huneke et~al.  \cite [Proposition 4.3]{HMMS13}. Before we give our main result, we prove lemmas on matrices of linear forms that are essential to the characterization of these ideals.  For any $m \times n$ matrix $M$  with $(m \leq n)$, we denote $I_j(M)$ to be the ideal generated by the $j \times j$ minors of $M$. $I_j(M)$ is unchanged by linear row and column operations. A matrix $M$ is $1$-generic if it does not have a zero entry called a generalized zero, after row/column operations. If $M$ is $1$- generic, then the ideal $I_{m}(M)$ generated by the maximal minors of $M$ is prime and of codimension $n-m+1$,  \cite[Theorem 6.4]{E05}.

\begin{lemma} \label {lemmaI_2} Let $M= \begin{pmatrix}a&b&c&d\\ e&f&g&h \end{pmatrix}$ where $a, \ldots h$ are linear forms, such that ht$(I_2(M))=1$. Then, after row and column operations, $M$ is one of the following
\begin{enumerate}
\item \label {anot0} $M=\begin{pmatrix}a&0&0&0\\ e&f&g&h \end{pmatrix}$ with $a \neq 0$. 
\item \label {afbe} $M= \begin{pmatrix}a&b&0&0\\ e&f&0&0 \end{pmatrix}$ with $af-be \neq 0$.
\item \label {ht(a,b,e)=3} $M= \begin{pmatrix}a&b&0&0\\ e&0&b& 0\end{pmatrix}$ with ht$(a,b,e)=3$. 
\end{enumerate}
\end{lemma}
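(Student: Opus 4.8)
The plan is to read the classification off from the common factors of the $2\times 2$ minors, using that $S$ is a UFD. Write $C_i=(p_i,q_i)$ for the $i$-th column and $\Delta_{ij}=p_iq_j-p_jq_i$ for the minor on columns $i,j$. Since $\height(I_2(M))=1$ is finite and positive, the minors are not all zero; let $g=\gcd_{i,j}\Delta_{ij}$ and factor $I_2(M)=g\cdot J$, where $J$ is generated by the $\Delta_{ij}/g$ and has trivial content. As $V(I_2(M))=V(g)\cup V(J)$, one has $\height(I_2(M))=\min\{\height(g),\height(J)\}$; and $\gcd=1$ forces $\height(J)\ge 2$, since a height-one prime is principal and would then divide every generator of $J$. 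Hence $\height(I_2(M))=1$ is equivalent to $\deg g\ge 1$, and because each $\Delta_{ij}$ has degree two we must have $\deg g\in\{1,2\}$. This dichotomy drives the argument; throughout I use only row and column operations over $k$, which preserve $I_2(M)$.

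First I treat $\deg g=2$. Then every minor is a scalar multiple of the quadric $g$, so after a column permutation I may assume $\Delta_{12}=g\ne 0$, whence $C_1,C_2$ are linearly independent over the fraction field $K$ of $S$. For $j\in\{3,4\}$ Cramer's rule writes $C_j$ as a $K$-linear combination of $C_1,C_2$ whose coefficients are ratios $\Delta_{\bullet j}/\Delta_{12}$, each a ratio of scalar multiples of $g$ and therefore an element of $k$; the identity then holds in $S^2$. Column operations over $k$ clear columns $3$ and $4$, leaving $M$ in the form of case (\ref{afbe}), with $af-be=g\ne 0$.

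The substantive case is $\deg g=1$, say $g=\ell$ a linear form dividing every minor. Reducing modulo $\ell$ kills all minors, so the image $\bar M$ over the domain $S/(\ell)$ satisfies $I_2(\bar M)=0$. Here I invoke a short auxiliary lemma: a $2\times n$ matrix of linear forms with vanishing $2\times 2$ minors can be put, by row and column operations over $k$, into a shape with at most one nonzero row or at most one nonzero column. Its proof is an easy UFD computation: if the first row survives, then for a nonzero entry $q_1$ the relations $p_1q_j=p_jq_1$ force either that the two rows are proportional over $k$ or that every column is a $k$-multiple of $C_1$. Since these operations commute with reduction modulo $\ell$, I apply the corresponding lifts to $M$. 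In the one-nonzero-row outcome the complementary row of $M$ equals $\ell$ times a constant vector, which one column operation normalizes to $(\ell,0,0,0)$; a row swap gives case (\ref{anot0}). In the one-nonzero-column outcome, columns $2,3,4$ of $M$ are $\ell$ times constant vectors spanning a space of dimension $d\le 2$: when $d\le1$ one lands in case (\ref{afbe}), and when $d=2$ one normalizes to $\begin{pmatrix}a&\ell&0&0\\ e&0&\ell&0\end{pmatrix}$.

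The final point, and the one to get right, is that this last matrix is exactly case (\ref{ht(a,b,e)=3}) precisely when $\height(a,\ell,e)=3$, and that otherwise it degenerates. If $a$ or $e$ lies in $\langle\ell\rangle$, or more generally if $a\in\langle e,\ell\rangle$, then a single row operation makes one row equal to $\ell$ times a constant vector, returning us to case (\ref{anot0}); and $\height(a,\ell,e)\le1$ cannot occur because the surviving column is nonzero modulo $\ell$. Thus the genuinely new form (\ref{ht(a,b,e)=3}) appears exactly in the height-three situation, while the side conditions $a\ne0$, $af-be\ne0$, $\height(a,b,e)=3$ hold automatically in each branch since $I_2(M)\ne0$. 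The only real obstacle is the bookkeeping in the degree-one case, namely correctly separating the subcases by the rank of the constant part and checking that the low-rank ones fold back into (\ref{anot0}) and (\ref{afbe}); once the auxiliary rank-one lemma is in place, each branch is routine.
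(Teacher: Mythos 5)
Your proof is correct, and it takes a genuinely different route from the paper's. The paper argues geometrically: since $\height(I_2(M))=1<3$, the matrix $M$ is not $1$-generic and hence has a generalized zero (citing \cite[Theorem 6.4]{E05}); it then splits into cases according to $\height(a,b,c)\in\{1,2,3\}$, using explicit decompositions such as $I_2(M)=(a,b)\cap(af-be,g,h)$ and, in the height-three case, \cite[Lemma 4.1]{HMMS13}. You instead work arithmetically in the UFD: writing $I_2(M)=g\cdot J$ with $g$ the gcd of the minors, noting $\height(I_2(M))=\min\{\height(g),\height(J)\}$ and $\height(J)\ge 2$ because height-one primes are principal, you reduce the lemma to the dichotomy $\deg g\in\{1,2\}$; the quadric-gcd case falls to Cramer's rule (columns $3,4$ are $k$-combinations of columns $1,2$, giving case~(\ref{afbe})), and the linear-gcd case to the structure of rank-one matrices of linear forms over $S/(\ell)$, whose two shapes yield case~(\ref{anot0}) and, after the dimension count on the constant parts of columns $2,3,4$, cases~(\ref{afbe}) and (\ref{ht(a,b,e)=3}), with the degenerate height situations folding back into (\ref{anot0}). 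Your route buys self-containedness (no $1$-genericity, no external lemma) and in fact more robustness: the paper's $\height(a,b,c)=2$ subcase asserts that $\height(e,f,g,h)\ge 3$ forces $\height(I_2(M))>1$, which as written fails for $M=\begin{pmatrix}x&y&0&0\\ x&y+z&z&0\end{pmatrix}$, where $\height(e,f,g,h)=3$ yet $I_2(M)=(xz,yz)$ has height one; this matrix is still equivalent to case~(\ref{anot0}), so the lemma is unaffected, but the paper's case analysis passes over it, whereas in your argument it lands squarely in the linear-gcd, one-nonzero-row branch. What the paper's route buys is economy within its own toolkit, since the generalized-zero technique is reused verbatim in Lemma \ref{lemmaI_3(M)}. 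Two small points to tidy in your write-up: in the one-nonzero-column outcome the subcase $d=0$ cannot occur at all (it would give $I_2(M)=0$) rather than landing in case~(\ref{afbe}); and your claim that the surviving column is nonzero modulo $\ell$ deserves its one-line reason, namely that otherwise every entry of $M$ would lie in $(\ell)$ and every minor would be divisible by $\ell^2$, contradicting $\deg g=1$.
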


\begin{proof} Since ht$(I_2(M))=1$ then $M$ is not $1$-generic \cite[Theorem 6.4]{E05} and has a generalized zero, say $d=0$. If ht$(a,b,c)=1$ then $M$ has the form of \ref{anot0}. If  ht$(a,b,c)=2$, then we may  assume  $M= \begin{pmatrix}a&b&0&0\\ e&f&g&h \end{pmatrix}$. If ht$(e,f,g,h)\geq 3$ then, since $I_2(M)=(af-be,ag, ah,bg, bh)= (a,b) \cap (af-be, g, h)$, we get ht$(I_2(M))>1$. Hence, we may assume ht$(e,f,g,h)=2$ (when it is equal to $1$, we $M$ has the form of \ref{anot0}). So the only two possibilities for $M$ would be
$$M= \begin{pmatrix}a&b&0&0\\ e&f&0&0 \end{pmatrix} \hspace{1cm} \mbox{or} \hspace{1cm}    M = \begin{pmatrix}a&b&0&0\\ e&0&f&0 \end{pmatrix}$$
In the latter case, we have $I_2(M)= (be, af,bf)$ which implies $f \in (b)$. This puts us in case \ref{ht(a,b,e)=3}. \\
Finally, when ht$(a,b,c)=3$, then $I_2(M)= (af-be,  ag-ce, bg-cf, ah, bh, ch )=(af-be, ag-ce, bg-cf,h) \cap (a,b,c)$. This implies that ht$\left(I_2 \begin{pmatrix}a&b&c\\ e&f&g \end{pmatrix}, h \right)=1$. Hence, by \cite[Lemma 4.1]{HMMS13}, we get ht$(e,f,g,h)=1$, which puts us in \ref{anot0}. \end{proof}

\begin{lemma} \label{lemmaI_3(M)} Let $M =\begin{pmatrix} a&b&c&d\\
e&f&g&h\\k&l&m&n \end{pmatrix}$  be a $3 \times 4$ matrix of generic forms. Suppose $ht(I_3(M))=1$ then, after row and column operations, $M$ has one of the following form
\begin{enumerate}
 \item  \label {ht(a,b,c)=1}    $M= \begin{pmatrix} a&0&0&0\\
e&f&g&h\\k&l&m&n \end{pmatrix}$ with $a \neq 0$ and ht$\left(I_2\begin{pmatrix} f&g&h\\ l&m&n \end{pmatrix}\right)>0$. \\
\item \label{ht(a,b)=2} $M=\begin{pmatrix} a&b&0&0\\
e&f&0&0\\k&l&m&n \end{pmatrix}$ with $af-be \neq 0$ and ht$(m,n) >0$.
\item \label{ht(a,e,g)=3} $M=\begin{pmatrix} a&b&0&0\\
e&0&b&0\\k&l&m&n \end{pmatrix}$ with $n \neq0$ and ht$(a,b,e)=3$.
\item \label{ht(a,b)=2,nnot0} $M=\begin{pmatrix} a&b&0&0\\
e&f&n&0\\k&l&0&n \end{pmatrix}$ with $n \neq 0$ and ht$(a,b)=2$.
\item \label{det} $M=\begin{pmatrix} a&b&c&0\\
e&f&g&0\\k&l&m&0 \end{pmatrix}$  with det$\begin{pmatrix} a&b&c \\
e&f&g\\k&l&m \end{pmatrix} \neq 0$.
\item\label{form} The form of $M$ is determined by at most four variables.
\end{enumerate}
\end{lemma}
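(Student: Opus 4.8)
The plan is to imitate the proof of Lemma~\ref{lemmaI_2}, organizing the argument by the number of independent linear forms in the top row. The generic value of $\height(I_3(M))$ for a $3\times 4$ matrix is $n-m+1 = 2$, so the hypothesis $\height(I_3(M)) = 1$ already tells me, via \cite[Theorem 6.4]{E05}, that $M$ is not $1$-generic. Hence after row and column operations $M$ has a generalized zero, which I place in the top right corner, so $d = 0$. The first row cannot be identically zero (otherwise every maximal minor vanishes and $\height(I_3(M)) = 0$), so $\height(a,b,c) \in \{1,2,3\}$, and I split into these three cases.

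If $\height(a,b,c) = 1$, then $a,b,c$ are proportional, and scalar column operations among the first three columns let me assume $b = c = 0$ with $a \neq 0$. Expanding each maximal minor along the first row gives the factorization $I_3(M) = (a)\,I_2\!\left(\begin{smallmatrix} f & g & h \\ l & m & n\end{smallmatrix}\right)$, so that $\height(I_3(M)) = \min\{1, \height(I_2(\cdots))\}$; the hypothesis then forces $\height(I_2(\cdots)) > 0$, which is exactly form~\ref{ht(a,b,c)=1}.

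The case $\height(a,b,c) = 2$ is the heart of the matter and the step I expect to be hardest. After clearing $c$ I may take $c = 0$ and $\height(a,b) = 2$, so columns $3$ and $4$ live only on rows $2,3$, in the block $B = \left(\begin{smallmatrix} g & h \\ m & n\end{smallmatrix}\right)$. A direct expansion gives $I_3(M) = \left(m\alpha - g\beta,\ n\alpha - h\beta,\ a\delta,\ b\delta\right)$ with $\alpha = af - be$, $\beta = al - bk$, and $\delta = \det B$. I would then normalize $B$ and the third row by further row and column operations, carefully checking that these preserve $\height(I_3(M))$, according to the generalized-zero structure of $B$ and the heights of its entries. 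When the top row of $B$ vanishes, $I_3(M) = (\alpha)(m,n)$ and the height-one hypothesis becomes $af - be \neq 0$ together with $\height(m,n) > 0$, i.e.\ form~\ref{ht(a,b)=2}; the remaining non-vanishing configurations of $B$, analyzed together with $\height(a,b) = 2$, yield forms~\ref{ht(a,e,g)=3} and~\ref{ht(a,b)=2,nnot0}; and any leftover configuration involves at most four variables, which is form~\ref{form}. The delicate point is purely combinatorial bookkeeping: showing that the height of the ideal generated by these four explicit cubics forces precisely the listed normal forms, and that the required simplifications are realizable by height-preserving operations.

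Finally, if $\height(a,b,c) = 3$ the forms $a,b,c$ are independent. Because $S$ is a UFD, $\height(I_3(M)) = 1$ forces all four maximal minors to share a common irreducible factor; in particular $\det N$, where $N = \left(\begin{smallmatrix} a&b&c\\ e&f&g\\ k&l&m\end{smallmatrix}\right)$, is divisible by it. Using this divisibility together with \cite[Lemma 4.1]{HMMS13} to control the last column, I would show that either the last column clears entirely, leaving $I_3(M) = (\det N)$ with $\det N \neq 0$, which is form~\ref{det}, or else all entries can be written in at most four variables, which is again form~\ref{form}. Exhausting the three cases produces exactly the six listed normal forms.
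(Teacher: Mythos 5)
Your opening reductions are correct and coincide with the paper's: \cite[Theorem 6.4]{E05} gives the generalized zero $d=0$; the factorization $I_3(M)=(a)\,I_2\left(\begin{smallmatrix} f&g&h\\ l&m&n\end{smallmatrix}\right)$ settles $\height(a,b,c)=1$; and in the case $\height(a,b,c)=2$ your formula $I_3(M)=(m\alpha-g\beta,\ n\alpha-h\beta,\ a\delta,\ b\delta)$ is right, so the subcase $g=h=0$ does give form (ii). But everything after that is a promise rather than an argument, and what is promised is precisely the content of the lemma. The paper's proof consists almost entirely of the step you defer: it first observes that the lower $2\times 4$ submatrix $M_1=\begin{pmatrix} e&f&g&h\\ k&l&m&n\end{pmatrix}$ must itself have a generalized zero (otherwise $\height(I_3(M))\neq 1$), which pins $M$ to one of two shapes, and then carries out a long containment analysis --- $I_3(M)\subset(a)$, $I_3(M)\subset(al-bk)$, $\det N\in(n)$, $I_3(M)$ contained in a height-one prime generated by a form of degree $1$, $2$, or $3$ --- repeatedly quoting \cite[Lemma 4.1]{HMMS13} and Lemma \ref{lemmaI_2} to land in forms (iii), (iv), (v), or (vi). Your proposal offers no mechanism for this beyond ``combinatorial bookkeeping,'' so forms (iii) and (iv), and the exhaustiveness of the six-item list, are simply not established.

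There is also a concrete error, not just an omission, in your $\height(a,b,c)=3$ case: the dichotomy you assert (either the last column clears and $M$ has form (v), or $M$ involves at most four variables, form (vi)) is false. In the paper's analysis, matrices with $\height(a,b,c)=3$ routinely exit this case into the \emph{earlier} normal forms: for instance, writing $N$ for the matrix of the first three columns, the branch $\det N\in(n)$ splits into subcases that force $k,l,m\in(n)$ (form (i)) or force certain $2\times 2$ minors of the top two rows together with entries of the third row into $(n)$ (forms (iii) or (iv)); and in the paper's Case 2(ii) the containment $I_3(M)\subset(c)$ forces $m,n,g\in(c)$, which is again form (iv). So an argument organized, as yours is, by the height of the first row must explicitly allow the $\height(a,b,c)=3$ analysis to fall back into the $\height(a,b,c)\leq 2$ cases; that fallback is where most of the work lives, and as stated the conclusion you claim for this case cannot be proved.
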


\begin{proof} Since ht$(I_3(M))=1$ then $M$ is not $1$- generic \cite[Theorem 6.4]{E05} and has a generalized zero. So we may assume that $M$ has the form of
$M =\begin{pmatrix} a&b&c&0\\
e&f&g&h\\k&l&m&n \end{pmatrix}$.
 If ht$(a, b, c) = 1$, then $M$ has the form of \ref{ht(a,b,c)=1}. Suppose ht$(a,b,c) \geq 2$ and consider $M_1= \begin{pmatrix} e&f&g&h\\k&l&m&n \end{pmatrix}$.  $M_1$ is not $1$-generic and has a generalized zero, or else ht$(I_3( M))\neq1$. Hence,  after a column operation and a linear change of variables, $M$ is equal to either  $M =\begin{pmatrix} a&b&c&0\\
e&f&g&0\\k&l&m&n \end{pmatrix}$ or $M =\begin{pmatrix} a&b&c&0\\
e&f&0&h\\k&l&m&n \end{pmatrix}$.  So for the rest of the proof, we assume ht$(e,f,g,h) \leq 3$.\\

 \underline{Case 1.} Suppose $M =\begin{pmatrix} a&b&c&0\\
e&f&g&0\\k&l&m&n \end{pmatrix}$. If $n=0$ and det$(N) \neq 0$ where $N$ is the matrix obtained from $M$ by deleting the last column, then $M$ has the form of \ref{det}. We assume $n \neq 0$, and we write $I_3(M)= ($det$ N, n \delta_1, n\delta_2, n\delta_3)$  where $(\delta_1, \delta_2,\delta_3)=I_2(M')=I_2 \begin{pmatrix}  a&b&c\\e&f&g \end{pmatrix}$. Hence $I_3(M)=(\delta_1, \delta_2,\delta_3) \cap (n, det N)$. We also write $det(N)=k(bg-fc)-l(ag-ec)+m(af-be)= k \delta_1-l\delta_2+m\delta_3$. Since ht$(I_3(M))=1$, then either ht$(\delta_1, \delta_2, \delta_3)=$ht$(I_2(M'))=1$ or det$N \in (n)$ (when det$(N) \in k,l $ or $m$, we get back to one of the two cases). If ht$(I_2(M'))=1$ then by \cite[Lemma 4.1]{HMMS13},  we get either \ref {ht(a,b,c)=1}, \ref {ht(a,b)=2}, or  \ref {ht(a,e,g)=3}. If det$(N) \in (n)$, then we get either $\delta_1, \delta_2,\delta_3 \in (n)$ or $k,l,m \in (n)$ or $\delta_1, \delta_2, m \in (n)$ or $\delta_1, l, m \in (n)$, and the remaining cases are identical. The first case implies ht$(I_2(M'))=1$, which was treated above. The second case puts us in \ref {ht(a,b,c)=1}, the third one in either \ref{ht(a,b,c)=1} or \ref{ht(a,b)=2,nnot0}, and the last one in either \ref{ht(a,e,g)=3} or \ref{ht(a,b)=2,nnot0}.

 \underline{Case 2.} Suppose $M =\begin{pmatrix} a&b&c&0\\ e&f&0&g\\k&l&m&n \end{pmatrix}$. In this case, we study the two possible values of  ht$(a,b,c)$.
 \begin{enumerate}
\item Suppose ht$(a,b,c)=2$, then either $M =\begin{pmatrix} a&b&0&0\\
e&f&g&0\\k&l&m&n \end{pmatrix}$ or  $M =\begin{pmatrix} a&b&0&0\\
e&0&f&g\\k&l&m&n \end{pmatrix}$. The former case was already treated in Case 1, so we discuss the latter case.\\
 $\bullet$  If ht $(e, f, g) \leq 1$ then we are back to case \ref{ht(a,b,c)=1}. \\
 $\bullet$  If ht$(e, f, g)=2$, then $M$ have one of the following forms:
 $$ a)M =\begin{pmatrix} 
a&b&0&0\\ 0&e&f&0\\k&l&m&n \end{pmatrix} \mbox{or} \hspace{0.3cm} b) M =\begin{pmatrix} 
a&b&0&0\\ 0&0&e&f\\k&l&m&n \end{pmatrix}  \hspace{0.3cm} \mbox{with ht$(a,b)=$ht$(e,f)=2$.}$$
-- If $M$ has the form of $a)$, then we get back to Case 1. \\
-- Suppose $M$ has the form of $b)$. We have $I_3(M)= \left( e(al-bk), f(al-bk), a(en-fm), b(en-fm)\right)= (a,b) \cap (e,f) \cap (al-bk, en-fm)$ with height  equal to $1$. We only study the cases when $I_3(M) \subset (a)$ or $I_3(M) \subset (al-bk)$, since the other ones are obtained in a similar manner after a linear change of variables. If $I_3(M) \subset (a)$, then $ebk, fbk, ben, bfm \in (a)$. This implies that $k \in (a)$, and either $e,m \in (a)$ or $m,n \in (a)$. We may assume $k=0$ after a row operation. If $e,m \in (a)$, then we get back to \ref{ht(a,b)=2,nnot0}. If $m, n \in (a)$, then we get back to  \ref{ht(a,e,g)=3} after row/column operations and a linear change of variables. When $I_3(M)=(e(al-bk), f(al-bk), a(fm-en), b(fm-en)) \subset (al-bk)$, then $(en-fm) \in (al-bk)$  and the form of $M$ is defined by $4$ variables which puts us in \ref{form}.\\
$\bullet$  Suppose ht$(e, f, g)=3$ and write  $M =\begin{pmatrix}  a&b&0&0\\ e&0&f&g\\k&l&m&n \end{pmatrix}$. We have $I_3(M)= (afl-b(em-fk), agl-b(en-gk), a(fn-gm), b(fn-gm))=(a,b) \cap (fn-gm, afl-b(em-fk), agl-b(en-gk))$ with height $1$. We may assume that  ht$(k,l,m,n) \geq 3$, since otherwise this would be identical to the case when ht$(e, f, g)=2$ . If $I_3(M) \subset (a)$, then $(em-fk), (en-gk), (fn-gm) \in (a)$. This implies that ht$\left(I_2\begin{pmatrix} e&f&g\\k&m&n \end{pmatrix} \right)=1$. By \cite[Lemma 4.1]{HMMS13}, we get either ht$(e,f,g) \leq 2$ or ht$(k,m,n) =1 $, which is impossible. If $I_3(M) \subset (b)$, then  $l, fn-gm \in (b)$. We may assume $l=0$ after a row operation and we are back to Case 1. If $I_3(M) \not\subset (a)$ nor in $(b)$, then ht$((fn-gm, afl-b(em-fk), agl-b(en-gk))=1$. We get  ht$\left(I_2\begin{pmatrix} e&f&g\\k&m&n \end{pmatrix}, fl, gl \right)=1$. Again, by \cite[Lemma 4.1]{HMMS13}, we get ht$(e,f,g) \leq 2$ or ht$(k,m,n) =1$ which is impossible. 

\vskip 0.3cm
\item  Suppose ht$(a,b,c)=3$, and $M =\begin{pmatrix} 
a&b&c&0\\ e&f&0&g\\k&l&m&n \end{pmatrix}$. Again, we assume that ht$(e,f,g)=3$ and ht$(k,l,m,n) \geq 3$, since otherwise these cases would be identical to whenever ht$(a,b,c)\leq 2$. We have $I_3(M)= (afm-bem+ c(el-fk), a(fn-gl)-b(en-gk), agm+c(en-gk), bgm+c(fn-gl))$. Since ht$(I_3(M))=1$, then $I_3(M)$ is contained in a prime ideal $P$ of height $1$. Every prime ideal of height $1$ in a UFD is principal, then $P$ is generated by a either a degree $1$, a degree $2$ or a degree three element. If $P$ is generated by a degree one element $u$, then $u$ would be one of the variables $a, b, c, e,f ,g, k, l, m$ or $n$. The cases when $I_3(M) \subset (a)$,$(b)$, $(e)$ or $(f)$ are identical, so we treat one of them. If $I_3(M) \subset (a)$, then $em, gm, (el-fk), (en-gk), (fn-gl) \in (a)$. It implies that ht$\left( I_2\begin{pmatrix}  e&f&g\\k&l&n \end{pmatrix}, em, gm\right)=1$. By \cite[Lemma 4.1]{HMMS13}, we get ht$(e,f,g) \leq 2$ or ht$(k,m,n)=1$ which is impossible. Again the cases of $I_3(M) \subset (c)$ or $(g)$ are identical.   If $I_3(M) \subset (c)$, then $fm, em, gm, (fn-gl), (en-gk) \in (c)$. Hence either  $m, n, g \in (c)$ and $M$ has the form of \ref{ht(a,b)=2,nnot0}, or $k,l,m \in (n)$  which is impossible in this case (we actually get back to \ref {ht(a,b,c)=1} but we assumed that $ht(k,l,m,n) \geq 3$).
 If $I_3(M) \subset (k), (l), (m)$ or $(n)$, then we again get back to the case when ht$(a,b,c)\leq 2$. Suppose $I_3(M)$ is contained in a prime ideal generated by a degree two form $u$. Since every term in $I_3(M)$ is a product of three linear forms of $M$, we may assume $u \in I_2(M)$. We either get ht $\left( I_2\begin{pmatrix}  e&f&0&g\\k&l&m&n \end{pmatrix}\right)= 1$ or ht $\left( I_2\begin{pmatrix}  a&b&c&0\\k&l&m&n \end{pmatrix}\right)= 1$ which are both impossible by Lemma \ref{lemmaI_2}, or we obtain cases that lead to either ht$(e,f,g)$ or ht$(k,l,m,n)$ less than $3$ which is also a contradiction.
 Finally, if  $u$ is a form of degree three, then $u$ must be one of the generators of $I_3(M)$. We again get to ht$(e,f,g) \leq 3$ or ht$(k,l,m,n) \leq 3$. \end{enumerate}

\end{proof} 
\begin{remark}\label{4variables}We record the matrices of \ref{lemmaI_3(M)}{\color{blue}-}\ref{form} as details are needed to complete the proof of Proposition \ref{multiplicity2}:
\begin{enumerate}
\item $M =\begin{pmatrix} a&b&0&0\\ 0&0&k&l\\k&l&a&b \end{pmatrix}$ with ht$(al-bk)=1$.
\item $M =\begin{pmatrix} a&b&0&0\\ 0&0&k&a\\k&l&l&b \end{pmatrix}$ with ht$(al-bk)=1$.
\end{enumerate}
\end{remark}

We then get to our first result on the characterization of $(x,y,z,w)$-primary ideals.

\begin{proposition} \label{multiplicity2} Let $J$ be an $(x,y,z,w)$-primary with $e(S/J) =2$ then one of the following holds:

\begin{itemize} \item[I-] If $J$ is degenerate then 
\begin{enumerate}
\item \label{linearform1} $J = (x,y,z,w^2)$.
\item \label{linearform2} $J= (x,y)+(z,w)^2+ (az+bw )$, ht$(x,y,z,w,a,b) =6$.
\item \label{linearform3} $J=(x)+ (y,z,w)^2+ (by+cz+dw, ey+fz+gw)$ with ht$\left(x,y,z,w,I_2\begin{pmatrix}b&c&d\\ e&f&g \end{pmatrix}\right) \geq 6$.
\item \label{vars21} All quadrics in $J$ can be written in terms of at most of $8$ variables.
\end{enumerate}

\item[II-]  If $J$ is non-degenerate then 
\begin{enumerate}
\item \label{I3} $J = (x, y, z,w)^2+ (ax+by+cz+dw, ex+fy+gz+hw, kx+ly+mz+nw)$  with ht$(x,y,z,w,I_3(M)\geq 6$, where $M=\begin{pmatrix} a&b&c&d\\
e&f&g&h\\k&l&m&n \end{pmatrix}$.
\item \label{I3=1}$J = (x,y,z,w)^2+(ax+by, ex+fy+nz, kx+ly+nw, (af-be)w-(al-kb)z)$ with ht$(a,b)=2$ and $n \neq 0$.
\item \label{I3=12} $J = (x,y,z,w)^2+(ax+by, ex+bz, ey-az, kx+ly+mz+nw)$ with ht$(a,b,e)=3$  and $n \neq 0$.
\item \label{vars2} All quadrics in $J$ can be written in terms of at most of $8$ variables.
\end{enumerate}

\end{itemize}

\end{proposition}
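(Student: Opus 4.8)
The plan is to reduce the entire classification to the structure of $J$ modulo $\mathfrak{p}^2$. Since $J$ is $\mathfrak{p}$-primary and $S/\mathfrak{p}$ is a polynomial ring, $e(S/\mathfrak{p})=1$, so the associativity formula gives $2=e(S/J)=\lambda(S_{\mathfrak{p}}/J_{\mathfrak{p}})$. A local ring of length two has square-zero maximal ideal, whence $\mathfrak{p}^2 S_{\mathfrak{p}}\subset J_{\mathfrak{p}}$; as $\mathfrak{p}$ is a complete intersection of linear forms, $\mathfrak{p}^2$ is $\mathfrak{p}$-primary, and contracting this inclusion back from $S_{\mathfrak{p}}$ yields $\mathfrak{p}^2\subset J\subset\mathfrak{p}$. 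Thus $J$ is determined by the graded submodule $\bar J:=J/\mathfrak{p}^2$ of the free module $\mathfrak{p}/\mathfrak{p}^2\cong (S/\mathfrak{p})^4$ on the basis $x,y,z,w$, and the length computation says precisely that the quotient $\mathfrak{p}/J$ has rank one over $S/\mathfrak{p}=k[u_1,\dots,u_s]$, where $u_1,\dots,u_s$ are the variables complementary to $x,y,z,w$. Everything below is bookkeeping of this rank-one quotient.

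Next I would stratify by $r:=\dim_k J_1$, the number of independent linear forms of $J$; since $J\subsetneq\mathfrak{p}$ one has $0\le r\le 3$, with $r\ge 1$ the degenerate branch and $r=0$ the non-degenerate one. In the degenerate branch I change coordinates so that the linear forms are $x$ (if $r=1$), $x,y$ (if $r=2$), or $x,y,z$ (if $r=3$), and pass to $S/J_1$, where the problem becomes the analogous multiplicity-two classification for the smaller linear prime. For $r=3$ this leaves a $(w)$-primary ideal of multiplicity two in one relevant variable, forcing $(w^2)$ and hence case \ref{linearform1}. For $r=2$ it leaves a $(z,w)$-primary ideal which modulo $(z,w)^2$ is cut out by a single quadric $az+bw$, giving case \ref{linearform2}. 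For $r=1$ it leaves a $(y,z,w)$-primary ideal cut out modulo $(y,z,w)^2$ by two quadrics with coefficient matrix $\begin{pmatrix}b&c&d\\ e&f&g\end{pmatrix}$, giving case \ref{linearform3}. In each situation the stated height hypothesis on the coefficient forms is exactly what keeps $J$ genuinely $\mathfrak{p}$-primary of multiplicity two; when it fails, the coefficient forms are supported on few of the $u_i$, so all quadrics of $J$ involve at most eight variables and we are in case \ref{vars21} (using the variable count of Lemma \ref{extend1}).

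The heart of the matter is $r=0$. Here $\bar J$ begins in degree two, and its lowest syzygies present $\mathfrak{p}/J$ by a $3\times 4$ matrix $M=\begin{pmatrix}a&b&c&d\\ e&f&g&h\\ k&l&m&n\end{pmatrix}$ of linear forms in the $u_i$, whose rows are the coefficients of the three quadric generators $ax+by+cz+dw$, $ex+fy+gz+hw$, $kx+ly+mz+nw$; the rank-one condition is $\height(I_3(M))\ge 1$. If $\height(x,y,z,w,I_3(M))\ge 6$, the matrix is as generic as possible and $J=\mathfrak{p}^2+(q_1,q_2,q_3)$ is $\mathfrak{p}$-primary of multiplicity two, which is case \ref{I3}. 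Otherwise $\height(I_3(M))=1$ and Lemma \ref{lemmaI_3(M)} puts $M$ into one of its normal forms: the forms \ref{ht(a,b)=2} and \ref{ht(a,e,g)=3}, after adjoining the single $2\times2$-minor quadric needed to excise the spurious height-four component (guided by Remark \ref{4variables}), give cases \ref{I3=1} and \ref{I3=12} under the genericity conditions $\height(a,b)=2$, respectively $\height(a,b,e)=3$, together with $n\ne 0$; the remaining normal forms are governed by at most four linear forms, so together with $x,y,z,w$ all quadrics of $J$ lie in at most eight variables, which is case \ref{vars2}. Throughout, I would certify each explicit candidate ideal equals $J$ by checking that it is contained in $J$ with the same height and multiplicity and invoking Lemma \ref{equalideals}.

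The step I expect to be the main obstacle is the degenerate matrix stratum $\height(I_3(M))=1$. Once the minors drop to height one, the naive ideal $\mathfrak{p}^2+(q_1,q_2,q_3)$ generally acquires an extra height-four primary component, and the work is to locate that component, produce the precise extra quadric (a minor combination such as $(af-be)w-(al-kb)z$) that removes it, and confirm that the enlarged ideal is $\mathfrak{p}$-primary of multiplicity exactly two. Synchronizing the primariness and multiplicity conditions with each normal form of Lemma \ref{lemmaI_3(M)}, and verifying that no surviving configuration escapes the listed cases \ref{I3}--\ref{vars2}, is where essentially all the care is required.
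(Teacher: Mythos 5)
Your overall route matches the paper's: establish $\mathfrak{p}^2\subset J\subset \mathfrak{p}$, split on degeneracy, reduce the degenerate branch to the lower-height classification (the paper does this in one stroke by writing $J=(x)+J'$ and quoting \cite[Proposition 4.3]{HMMS13}), and in the non-degenerate branch extract the $3\times 4$ coefficient matrix $M$, settle the generic case $\height(x,y,z,w,I_3(M))\geq 6$ by an unmixedness computation, and otherwise run through the normal forms of Lemma \ref{lemmaI_3(M)}, adjoining the extra quadric in the two special cases and concluding equality from equal height and multiplicity via Lemma \ref{equalideals} (with the Appendix lemmas supplying the needed unmixedness). Up to that point the plan is sound.

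There is, however, a genuine gap in how you dispose of the remaining normal forms of Lemma \ref{lemmaI_3(M)}. You claim that every form other than the two yielding cases \ref{I3=1} and \ref{I3=12} is ``governed by at most four linear forms,'' hence lands in case \ref{vars2}. That is true only of form \ref{form}. Forms \ref{ht(a,b,c)=1}, \ref{ht(a,b)=2} and \ref{det} are not few-variable configurations at all: in form \ref{ht(a,b,c)=1}, for instance, the second and third rows of $M$ are unconstrained, so the three quadrics can involve far more than eight variables, and no variable-count argument can place them in case \ref{vars2}. The paper's actual argument is that these three forms \emph{cannot occur} in the non-degenerate branch: in each of them an element of $J$ factors as (linear form in $\mathfrak p$)$\times$(form outside $\mathfrak p$) --- namely $ax$ in form \ref{ht(a,b,c)=1}, $(af-be)y$ in form \ref{ht(a,b)=2}, and $a\det(N)\,z$ in form \ref{det} --- so $\mathfrak p$-primariness forces $x$ (resp.\ $y$, $z$) into $J$, contradicting non-degeneracy. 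Without this primariness step your case analysis is not exhaustive and the proof fails on those three strata. Two smaller slips of the same kind: the form that produces case \ref{I3=1} is \ref{ht(a,b)=2,nnot0} of Lemma \ref{lemmaI_3(M)}, not \ref{ht(a,b)=2}; and Remark \ref{4variables} pertains to the four-variable form \ref{form}, not to the construction of the extra quadrics --- those come from the very same primariness mechanism (e.g.\ $an\bigl((al-bk)z-(af-be)w\bigr)\in J$ with $an\notin\mathfrak p$ forces $(al-bk)z-(af-be)w\in J$, and $b(ey-az)\in J$ forces $ey-az\in J$), which your write-up never invokes.
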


\begin{proof} The proof of the first three cases goes along the same line as the first cases of \cite [Proposition 4.3]{HMMS13}. Since $e(S/J)=2$, we have $(x,y,z,w)^2 \subset J \subset (x,y,z,w)$. If $J$ contains a linear form, then $J$ can be written as $J = (x)+(J')$ where $J'$ is $(y, z, w)$-primary with multiplicity $2$. By \cite [Proposition 4.3]{HMMS13}, the authors obtained $4$ different cases for $J'$, and hence $J$ will have the form of  {\color{blue}I-}\ref{linearform1},  {\color{blue}I-}\ref{linearform2},  {\color{blue}I-}\ref{linearform3} or  {\color{blue}I-}\ref{vars21}. If $J$ does not contain a linear form, then since $e(S/(x,y,z,w)^2)=5$, there exists three generators of the form $ax+by+cz+dw, ex+fy+gz+hw$ and $kx+ly+mz+nw$. We may assume that these generators are linearly independent or else $e(S/J) >2$. By lemma \ref {lemma1}, the ideal $(x, y, z,w )^2+ (ax+by+cz+dw, ex+fy+gz+hw, kx+ly+mz+nw)$ is unmixed with multiplicity $2$, and hence equal to $J$ if ht$\left(x,y,z,w,I_3\begin{pmatrix} a&b&c&d\\
e&f&g&h\\k&l&m&n \end{pmatrix}\right) \geq 6$. This puts us in case  {\color{blue}II-}\ref{I3}. We may suppose that ht$(I_3(M))+(x,y,z,w))/(x, y, z, w)=1$. By lemma $\ref{lemmaI_3(M)}$, $M$  can take six different forms. When $M$ has the form of \ref{lemmaI_3(M)}{\color{blue}.}\ref{ht(a,b,c)=1}, we have $ax \in J$ but $a \notin (x,y,z,w)$ and $J$ is $(x,y,z,w)$-primary then $x \in J$ and $J$ contains a linear form. When $M$ has the form of \ref{lemmaI_3(M)}{\color{blue}.}\ref{ht(a,b)=2}, we have $(af-be)y \in J$ and $(af-be) \notin (x,y,z,w)$ hence $y \in J$, also a contradiction. Similarly, when $M$ has the form of \ref{lemmaI_3(M)}{\color{blue}.}\ref{det}, we  get $((af-be)(am-kc)- (al-kb)(ag-ce))z=a(det(N))z \in J$, with $N$ obtained from $M$ by deleting the $4^{th}$ column.  However, $adet(N) \notin (x,y,z,w)$ hence $z \in J$, which is a contradiction as well. Suppose $M$ has the form of \ref{lemmaI_3(M)}{\color{blue}.}\ref{form}. By Remark \ref{4variables}{\color{blue}-i}, either  $J =(x,y,z,w)^2+(ax+by, kz+lw, ly+az, kx+bw, bz -lx, aw -ky) $ by Lemma \ref{lemma40} or $J$ contains a linear form. When $(x,y,z,w)^2+(ax+by, kz+aw, kx+ly+lz+bw)$ in Remark \ref{4variables}{\color{blue}-ii}, we get $(al-bk)(z+y)\in J$ and $z+y\in J$ . Hence $J$ contains a linear form, which is impossible. We conclude that, when $M$ has the form of \ref{lemmaI_3(M)}{\color{blue}.}\ref{form}, all quadrics are written in terms of $8$ variables which puts us in case {\color{blue}II-}\ref{vars2}. The remaining cases for $M$ are \ref{lemmaI_3(M)}{\color{blue}.}\ref{ht(a,e,g)=3} and \ref{lemmaI_3(M)}{\color{blue}.}\ref{ht(a,b)=2,nnot0}. In the latter case, we get $J = (x,y,z,w)^2+ (ax+by, ex+fy+nz, kx+ly+nw)$ and $an((al-bk)z-(af-be)w) \in J$. Since $an \notin (x,y,z,w)$, then $(al-bk)z-(af-be)w \in J$. Consider the following ideal $K=(x,y,z,w)^2+ (ax+by, ex+fy+nz, kx+ly+nw, (al-bk)z-(af-be)w)$ with ht$\left(x,y,z,w,n,I_2\begin{pmatrix} a&b\\ e&f \\ k&l\end{pmatrix}\right)=7$, or else $K$ contains a linear form. By lemma \ref {lemma2}, $K$ is unmixed of multiplicity $2$ and is equal to $J$, leaving us in case {\color{blue}II-}\ref{I3=1}. When $M$ has the form of \ref{lemmaI_3(M)}{\color{blue}.}\ref{ht(a,e,g)=3}, we have $ax+by$ and $ex+bz \in J$. We get, $eby-abz \in J$, since $b \notin (x,y,z,w)$ and $J$ is primary then $ey-az \in J$. We have  ht$(a,b,e)=3$, or else $J$ contains a linear form. By Lemma  \ref{lemma3}, the ideal $(x,y,z,w)^2+(ax+by, ex+bz, ey-az,kx+ly+mz+nw)$ is unmixed of height $4$ and multiplicity $2$, hence it is equal to $J$. This  leaves us in {\color{blue}II-}\ref{I3=12}.  
\end{proof}

\section {$(x,y,z,w)$-primary of multiplicity 3}
In this section, we give a characterization of the degree $2$ component of an ideal primary  to the linear prime $\mathfrak{p}=(x,y,z,w)$ of multiplicity $3$. Before stating our result, we again prove a lemma on matrices with linear forms.
We start with the following:

\begin{lemma} \label{I_2(M_1)}Let $M= \begin{pmatrix}a&b&c&d\\e&f&g&h\end{pmatrix}$ with ht$(I_2(M))=2$ then $M$ is one of the following:
\begin{enumerate}
\item \label{ab} $M= \begin{pmatrix}a&b&0&0\\e&f&g&h\end{pmatrix}$ with ht$(a,b)=2$, ht$(g,h)>0$ and ht$(e,f,g,h) \geq 2$.
\item \label {h=0} $M= \begin{pmatrix}a&b&c&0\\e&f&g&0\end{pmatrix}$ with ht$(a,b,c)=$ht$(e,f,g) =3$.
\item \label{c} $M= \begin{pmatrix}a&b&c&0\\e&f&0&c\end{pmatrix}$ with ht$(af-be, c)=2$, ht$(a,b,c)=$ht$(e,f,c)=3$.
\item \label {bc} $M= \begin{pmatrix}a&b&c&0\\e&0&b&c\end{pmatrix}$ with ht$(a,b,c,e)=4$.
\end{enumerate}
\end{lemma}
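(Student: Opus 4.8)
The plan is to mirror the proof of Lemma~\ref{lemmaI_2}, replacing its height-one bookkeeping by a height-two analysis and feeding in the classification of $2\times 3$ matrices of linear forms with $\operatorname{ht}(I_2)=1$ from \cite[Lemma 4.1]{HMMS13}. For a $2\times 4$ matrix the generic height of $I_2$ is $4-2+1=3$, so the hypothesis $\operatorname{ht}(I_2(M))=2<3$ forces $M$ not to be $1$-generic by \cite[Theorem 6.4]{E05}; hence $M$ has a generalized zero and, after row and column operations, I may assume $d=0$. I would first dispose of a small first row: if $\operatorname{ht}(a,b,c)\le 1$ then column operations give $b=c=0$ and $I_2(M)=a\,(f,g,h)\subseteq (a)$ has height at most $1$, a contradiction, so $\operatorname{ht}(a,b,c)\in\{2,3\}$.

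If $\operatorname{ht}(a,b,c)=2$, scalar column operations among the first three columns make $c=0$, putting $M$ into the shape of \ref{ab}. The three asserted conditions are read off as necessary consequences of $\operatorname{ht}(I_2(M))=2$: $\operatorname{ht}(a,b)=2$ holds by construction; if $\operatorname{ht}(g,h)=0$ then $I_2(M)=(af-be)$ has height $\le 1$; and if $\operatorname{ht}(e,f,g,h)\le 1$ then the second row is a scalar multiple of a single linear form $\ell$, every $2\times2$ minor is divisible by $\ell$, and $I_2(M)\subseteq(\ell)$ again has height $\le 1$. Both degeneracies contradict the hypothesis, so we land exactly in \ref{ab}.

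If $\operatorname{ht}(a,b,c)=3$, a linear change of variables lets me take $a,b,c$ to be coordinates. When $h=0$, $I_2(M)$ is the ideal of maximal minors of the core $\begin{pmatrix}a&b&c\\e&f&g\end{pmatrix}$; the row-degeneracy argument above shows $\operatorname{ht}(e,f,g)\ge 2$, and if $\operatorname{ht}(e,f,g)=2$ I swap the two rows to return to the case $\operatorname{ht}(a,b,c)=2$, hence to \ref{ab}, while $\operatorname{ht}(e,f,g)=3$ is precisely \ref{h=0}.

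The main obstacle is the remaining case $\operatorname{ht}(a,b,c)=3$ with $h\ne 0$. Writing $I_2(M)=I_2\!\begin{pmatrix}a&b&c\\e&f&g\end{pmatrix}+h\,(a,b,c)$, every height-two minimal prime $P$ of $I_2(M)$ must contain $h$, since $ha,hb,hc\in P$ and $P$ cannot contain $a,b,c$; thus $\sqrt{I_2(M)}=\sqrt{J+(h)}$, where $J$ is the core $I_2$, and in particular $\operatorname{ht}(J+(h))=2$. I would first show $h\in\langle a,b,c\rangle$: otherwise $\operatorname{ht}(a,b,c,h)=4$ and, reducing modulo the nonzerodivisor $h$, the core becomes a $2\times 3$ matrix whose top row still has height $3$ while its $I_2$ has height $1$, which is impossible because every form produced by \cite[Lemma 4.1]{HMMS13} has a degenerate top row. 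Rechoosing the basis of $\langle a,b,c\rangle$ so that $h=c$, the matrix becomes $\begin{pmatrix}a&b&c&0\\e&f&g&c\end{pmatrix}$, and it remains to normalize the second row. Here I expect the real work: applying \cite[Lemma 4.1]{HMMS13} to the core and computing the minors directly, the second row either degenerates enough that, after a row swap or an extra column operation, $M$ returns to the shape \ref{ab}; or it can be cleared to $\begin{pmatrix}a&b&c&0\\e&f&0&c\end{pmatrix}$, giving \ref{c}, when $\operatorname{ht}(e,f,c)=3$; or to $\begin{pmatrix}a&b&c&0\\e&0&b&c\end{pmatrix}$, giving \ref{bc}, when $\operatorname{ht}(a,b,c,e)=4$. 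The accompanying conditions, such as $\operatorname{ht}(af-be,c)=2$ in \ref{c}, then follow from $\operatorname{ht}(I_2(M))=2$ via the factorization $I_2(M)=(af-be)+c\,(a,b,c,e,f)$. Sorting out exactly which of these three outcomes occurs, and checking that the degenerate possibilities really do collapse onto \ref{ab} rather than yielding a new normal form, is the delicate heart of the argument.
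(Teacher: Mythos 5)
Your opening matches the paper's proof: non-$1$-genericity gives a generalized zero, so $d=0$; ht$(a,b,c)\leq 1$ is impossible; ht$(a,b,c)=2$ lands in \ref{ab}; and $h=0$ gives \ref{h=0} or, after a row swap, \ref{ab}. But in the only substantive case, ht$(a,b,c)=3$ with $h\neq 0$, you do not give a proof: you list the three expected outcomes and explicitly defer ``the delicate heart of the argument.'' That deferred step is precisely the content of the lemma. The paper carries it out by decomposing $I_2(M)=(a,b,c)\cap\bigl(h,\, I_2\begin{pmatrix}a&b&c\\ e&f&g\end{pmatrix}\bigr)$, reducing to the situation ht$(a,b,c)=3$ and ht$(e,f,g,h)\geq 3$ (if ht$(e,f,g,h)\leq 2$ one swaps rows and falls back to the earlier cases), and then analyzing which height-two primes can contain $I_2(M)$: up to symmetry they are $(a,c)$ or $(h,af-be)$, and chasing the resulting membership constraints yields \ref{bc}, respectively \ref{ab} or \ref{c}.

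Moreover, the one concrete argument you do offer in this case is wrong. You claim $h\notin\langle a,b,c\rangle$ is impossible because, modulo $h$, the core would be a $2\times 3$ matrix with ht$(I_2)=1$ whose top row has height $3$, ``impossible since every form in \cite[Lemma 4.1]{HMMS13} has a degenerate top row.'' The normal forms in that lemma are reached using row as well as column operations, so the height of the \emph{original} top row is not readable from them. Concretely, take
$$M=\begin{pmatrix}a&b&c&0\\ 0&0&a&h\end{pmatrix}$$
with $a,b,c,h$ independent variables. Then $I_2(M)=(a^2,ab,ah,bh,ch)$, whose zero locus is $V(a,h)\cup V(a,b,c)$, so ht$(I_2(M))=2$, while ht$(a,b,c)=3$, $h\neq 0$ and ht$(a,b,c,h)=4$: no contradiction arises. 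This $M$ is simply of type \ref{ab} after a row swap and a column permutation, and its mod-$h$ core $\begin{pmatrix}a&b&c\\ 0&0&a\end{pmatrix}$ is one of the forms of \cite[Lemma 4.1]{HMMS13} with the two rows interchanged. The possibility your argument overlooks is exactly ht$(e,f,g,h)\leq 2$, i.e.\ a degenerate \emph{second} row, which must be eliminated by a row swap before any analysis of height-two primes; since your case split is keyed only to the top row, such matrices break the step $h\in\langle a,b,c\rangle$, and the remainder of the hard case is left unexecuted. As it stands, the proposal does not establish the lemma.
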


\begin{proof} Since ht$(I_2(M))=2$, then $M$ is not $1$-generic and has a generalized zero. So we may assume that $d=0$ and $M= \begin{pmatrix}a&b&c&0\\e&f&g&h\end{pmatrix}$. We get $I_2(M) =(af-be, ag-ec, bg-fc, ah, bh, ch)=(a,b,c) \cap \left(h, I_2\begin{pmatrix} a&b&c\\ e&f&g \end{pmatrix}\right)$. Hence the following cases may occur:
\begin{itemize}
\item ht$(a,b,c)=2$, which puts us in \ref{ab}
\item $h=0$, which is case \ref{h=0}
\item ht$(a,b,c)=3$, ht$(e,f,g,h) \geq 3$ and $I_2(M)$ is contained in a prime of height two.
\end{itemize} 

We study the last case in detail. The primes of height two that contain $I_2(M)$ are either obtained from two linear forms or from one linear form and one quadric. The linear forms are namely coming from $a,b,c,e,f,g,h$  and the quadric one from $I_2 \begin{pmatrix} a&b&c\\ e&f&g \end{pmatrix}$. Since ht$(a,b,c)=3$ and ht$(e,f,g,h) \geq 3$, we only study the cases when the primes containing $I_2(M)$ are $(a,c)$ or $(h, af-be)$. All other cases lead us back to these two cases, either by a linear change of variables or by studying the generators of $I_2(M)$. For instance, if we are studying the case when $I_2(M) \subset (a,e)$ then $bg, fc, bh, ch \in (a,e)$. Since ht$(a,b,c)=3$ and ht$(e,f,g,h) \geq 3$, then either $b$ or $c$ are in $(e)$. We may assume $c=e$. We show below that if $I_2(M) \subset (a,c)$ then either $e=c$ or $e=0$.  Hence, since ht$(I_2(M))=2$, then the case when $I_2(M) \subset (a,e)$ is be equivalent to the case when $I_2(M) \subset (a,c)$.\\
Suppose  $I_2(M) \subset (a,c)$. Since ht$(a,b,c)=3$, we get $e,g,h \in (a,c)$. We assume $h \neq 0$ or else we are in  \ref{h=0}. The cases when $h=a$ and $h=c$ are identical so we study one of them in detail. 
Suppose $h=a$, then we may assume that $g=0$ by row/column operations and $e=c$. Hence we get to \ref{bc} by row/column operations. We note, that when $h=c$, we get $e=0$ and $g=a$. We again obtain \ref{bc} by row/column operations.
 When $I_2(M) \subset (af-be, h)$, we have $bg-cf, ag-ce \in (af-be, h)$. This puts us  either in \ref{ab} or in \ref{c} when $g =0$ and $c\in h$. We note that we consider ht$(a,b,c)=$ht$(e,f,c)=3$ in \ref{c} and ht$(a,b,c,e)=4$ in \ref{bc}, or else we go back to \ref{ab}.
\end{proof}

 \begin{lemma} \label{lemmaI_3(M)=2} Let $M =\begin{pmatrix} a&b&c&d\\
e&f&g&h\\k&l&m&n \end{pmatrix}$ with ht$(I_3(M))=ht(I_2(M))=2$. Suppose ht$(a,b,c) \geq 2$ and $d=0$, then $M$ is one of the following:

\begin{enumerate}
\item \label {abab} $M = \begin{pmatrix} a&b&0&0\\ 0&a&b&0 \\ k&l&m&n \end{pmatrix}$ with $n \neq 0$, ht$(a,b,k,l,m,n)\geq 5$.
\item \label {abab1} $M=\begin{pmatrix} a&b&0&0\\ 0&0&a&b \\ k&l&m&n \end{pmatrix}$ with ht$(a,b,k,l,m,n) \geq 5$.
\item \label {bg} $M = \begin{pmatrix} a&b&c&0\\ e&f&g&0 \\ k&0&f&g \end{pmatrix}$ or $M = \begin{pmatrix} a&b&c&0\\ e&f&g&0 \\ k&0&g&f \end{pmatrix}$ with ht$(a,e,f,g,k)=5$ and $b,c \in (f,g)$ (one of $b$ or $c \neq 0$). 
 \item \label {form1} The form of $M$ is determined by at most four variables.
  \end{enumerate}

\end{lemma}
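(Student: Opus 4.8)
The plan is to reduce the classification of the $3\times 4$ matrix $M$ to the already-established classification of $2\times 4$ matrices in Lemma \ref{I_2(M_1)} and Lemma \ref{lemmaI_2}, applied to a well-chosen pair of rows, and then to analyze the remaining row using the two height hypotheses. Since $d=0$ is assumed, write $N=\begin{pmatrix}a&b&c&0\\ e&f&g&h\end{pmatrix}$ for the top $2\times 4$ block and $M_1=\begin{pmatrix}e&f&g&h\\ k&l&m&n\end{pmatrix}$ for the bottom one. Because the $2\times 2$ minors of any sub-block are among the $2\times 2$ minors of $M$, one has $I_2(N)\subseteq I_2(M)$ and $I_2(M_1)\subseteq I_2(M)$, so both have height $\leq 2$. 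In particular $\height(I_2(M_1))\leq 2<3$, so by \cite[Theorem 6.4]{E05} the block $M_1$ is not $1$-generic and carries a generalized zero; placing it by row and column operations, exactly as in the proof of Lemma \ref{lemmaI_3(M)} (Cases 1 and 2 there), brings $M$ into a shape with a second controlled zero in the lower rows. I would then organize the remaining work around the two values $\height(a,b,c)=2$ and $\height(a,b,c)=3$ allowed by the hypothesis $\height(a,b,c)\geq 2$.

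First I would treat $\height(a,b,c)=2$, where a column operation makes the first row $(a,b,0,0)$ with $\height(a,b)=2$. Here the smallness of $\height(I_2(M))=2$ is the driving constraint. The minors involving the first row generate $(af-be,ag,ah,bg,bh)=(a,b)\cap(af-be,g,h)$, all of which lie in $(a,b)$; the minors of the lower rows in the last two columns, however, need not lie in $(a,b)$, so keeping the total height equal to $2$ forces these to align with the ideal $(a,b)$. Tracking which entries are thereby pushed into $(a,b)$ and clearing them by row and column operations produces either the Hankel pattern $\begin{pmatrix}a&b&0&0\\ 0&a&b&0\\ k&l&m&n\end{pmatrix}$ of \ref{abab} or the block pattern $\begin{pmatrix}a&b&0&0\\ 0&0&a&b\\ k&l&m&n\end{pmatrix}$ of \ref{abab1}; in each the inclusion $I_2(M)\subseteq(a,b)$ makes $\height(I_2(M))=2$ automatic, while the conditions $n\neq 0$ and $\height(a,b,k,l,m,n)\geq 5$ are exactly what is needed to keep $\height(I_3(M))=2$ and to prevent a further column dependence (which would drop a linear form or collapse the variables). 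When no such shift survives, the entries fall into the span of four fixed forms and we land in \ref{form1}.

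For $\height(a,b,c)=3$ the first row is $(a,b,c,0)$ with $a,b,c$ independent, and I would feed the top $2\times 3$ block into the mechanism used repeatedly in the proof of Lemma \ref{lemmaI_3(M)}: since each $3\times 3$ minor of $M$ is a sum of products of linear forms, membership of $I_3(M)$ or of a truncated $I_2$ in a height-two prime forces, via \cite[Lemma 4.1]{HMMS13}, either one row of a $2\times 3$ submatrix to have height $\leq 2$ or the two rows to be aligned. Iterating this (and using Lemma \ref{I_2(M_1)} forms \ref{h=0}, \ref{c}, \ref{bc}, or Lemma \ref{lemmaI_2} forms \ref{afbe}, \ref{ht(a,b,e)=3}, on $N$, with \ref{anot0} excluded by $\height(a,b,c)\geq 2$) forces the column dependence $b,c\in(f,g)$ together with $\height(a,e,f,g,k)=5$, which after normalization is precisely the two matrices of \ref{bg}; the residual branches again collapse into four variables and give \ref{form1}.

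The main obstacle I anticipate is not any single height computation but the bookkeeping at the final stage. After the submatrix reductions there are many branches according to which entry of the third row lies in which partial ideal, and for each surviving candidate one must (a) verify that it actually realizes both $\height(I_2(M))=2$ and $\height(I_3(M))=2$, so that the hypotheses are consistent and no spurious case is introduced, and (b) check that the stated side conditions ($n\neq0$, the height-$5$ and height-$\geq 5$ constraints, and $b,c\in(f,g)$) are exactly the thresholds that separate the three genuine normal forms \ref{abab}, \ref{abab1}, \ref{bg} from one another and from the degenerate collapse \ref{form1}. This separation, together with confirming that the two earlier lemmas cover every shape of the chosen $2\times 4$ block, is where the care will be required.
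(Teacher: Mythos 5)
Your high-level strategy---normalize a $2\times 4$ block of $M$ using Lemmas \ref{lemmaI_2} and \ref{I_2(M_1)}, then pin down the remaining row by asking which height-two prime contains $I_2(M)$---is the same one the paper follows, so everything turns on whether the resulting case analysis is actually carried out, and in your proposal it is not: it is exactly the part you set aside as ``bookkeeping at the final stage,'' and that case analysis \emph{is} the content of the lemma. Worse, the one concrete mechanism you do commit to, in the $\height(a,b,c)=2$ branch, is wrong as stated. From $\height(I_2(M))=2$ you conclude that the minors of the lower rows are ``forced to align with the ideal $(a,b)$.'' The hypothesis only gives that $I_2(M)$ lies in \emph{some} height-two prime $P$, and $P$ need not be $(a,b)$: since the generators of $I_2(M)$ are sums of products of linear forms, $P$ can be a mixed linear prime such as $(a,g)$ or $(b,g)$, or a prime generated by one linear form and one irreducible quadric such as $(el-fk,n)$ or $(em-gk,n)$. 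The paper's proof spends most of its length running through precisely these alternatives (its subcases $I_2(M)\subset (a,b)$, $(a,g)$, $(b,g)$, and $I_2(M_3)\subset (e,f)$, $(e,g)$, $(el-fk,n)$, $(em-gk,n)$, etc.), and it is from these branches---not from the $(a,b)$ branch---that case \ref{form1} and the two matrices of \ref{bg} actually emerge. Until every such prime is either eliminated or shown to lead to one of the four listed forms, there is no argument that the list is complete; a priori one of the branches you discard could produce a fifth normal form.

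Two further points. First, your normalization order is inconsistent: you begin by placing a generalized zero in the lower $2\times 4$ block by row and column operations, but those column operations will in general destroy the normalization $d=0$, $\height(a,b,c)\geq 2$ of the first row, on which your subsequent split into $\height(a,b,c)=2$ versus $3$ depends. The paper avoids this by fixing the top block in one of the four forms of Lemma \ref{I_2(M_1)} first, and afterwards constraining the third row only through containment statements about $I_2(M_3)$ and $I_2(M)$, never through further column normalization. Second, before Lemma \ref{I_2(M_1)} can be applied to a pair of rows one needs $\height(I_2(M_i))=2$ exactly, not merely $\leq 2$; the paper secures this at the outset by showing that $\height(I_2(M_i))=1$ would force $\height(I_3(M))=1$ via Lemma \ref{lemmaI_2}. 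Your proposal instead carries the height-one forms \ref{afbe} and \ref{ht(a,b,e)=3} of Lemma \ref{lemmaI_2} along as live possibilities for the top block; those branches are in fact vacuous, and treating them as live both misses this structural first step and adds yet more unanalyzed cases.
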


\begin{proof}  Since ht$(I_3(M))=\mbox{ht}(I_2(M))=2$, we first show that ht$(I_2(M_i))=2$ for all $i$ where $M_1=\begin{pmatrix} a&b&c&d\\e&f&g&h \end{pmatrix}$, $M_2=\begin{pmatrix} a&b&c&d\\ k&l&m&n \end{pmatrix}$, $M_3=\begin{pmatrix} e&f&g&h\\ k&l&m&n \end{pmatrix}$.  For that, if there exists an $i$ such that ht$(I_2(M_i))=3$, then ht$(I_2(M) )\geq $ ht$(I_2(M_i))=3$ which is impossible. If there exists an $i$ such that ht$(I_2(M_i))=1$, then $M_i$ is one of Lemma \ref{lemmaI_2} which implies that ht$(I_3(M))=1$. \\
Since ht$(I_2(M_1))=2$, then $M_1$ is one of the cases of Lemma \ref{I_2(M_1)} and $M$ takes one of the following forms:\\
 1) $M = \begin{pmatrix} a&b&0&0\\ e&f&g&h \\ k&l&m&n \end{pmatrix}$, 2) $M = \begin{pmatrix} a&b&c&0\\ e&f&g&0 \\ k&l&m&n \end{pmatrix}$ , 3) $M = \begin{pmatrix} a&b&c&0\\ e&f&0&c \\ k&l&m&n \end{pmatrix}$ or  4) $M = \begin{pmatrix} a&b&c&0\\ e&0&b&c \\ k&l&m&n \end{pmatrix}$ \\
\underline {Case 1):} -Suppose $M = \begin{pmatrix} a&b&0&0\\ e&f&g&h \\ k&l&m&n \end{pmatrix}$. Since ht$(I_2(M_3))=2$, then $M_3$ has a generalized zero and we get two forms for the matrix $M$: either $M = \begin{pmatrix} a&b&0&0\\ e&f&g&0 \\ k&l&m&n \end{pmatrix}$ or $M = \begin{pmatrix} 0&0&a&b\\ e&f&g&0 \\ k&l&m&n \end{pmatrix}$.  
We divide the rest of the proof into two parts:\begin{enumerate} \item[1a)] ht$(e,f,g)=2$.  \item [1b)]  ht$(e,f,g)=3$ and ht$(k,l,m,n) \geq 3$. \end{enumerate} 

\noindent 1a) \underline{ht$(e,f,g)=2$:} We suppose $M=\begin{pmatrix} a&b&0&0\\ e&f&g&0 \\ k&l&m&n \end{pmatrix}$ with $n \neq 0$ and $g \neq0$ or else ht$(I_3(M))=1$. We can also assume that either $e =0$ or $f=0$ by a column operation and a linear change of variables. We write $M=\begin{pmatrix} a&b&0&0\\ 0&f&g&0 \\ k&l&m&n \end{pmatrix}$.
 
 $\bullet$ If ht$(a,b,f,g)=2$, then $M=\begin{pmatrix} a&b&0&0\\ 0&a&b&0 \\ k&l&m&n \end{pmatrix}$ with $n \neq 0$. We may assume that ht$(a,b,k,l,m,n) \geq 5$ or else we are back to \ref{form1}. 
 
  $\bullet$  If ht$(a,b,f,g)=3$, then the cases $f \in (a)$ or $g \in (b)$ are identical after a linear change of variable, which reduces our study to the following three case: $f \in (a)$, $f\in (b)$ or $g \in (a)$. When $g \in (a)$, we get ht$(I_3(M))=1$, which is impossible. If $ f \in (a)$, then $M = \begin{pmatrix} a&b&0&0\\ 0&a&g&0 \\ k&l&m&n \end{pmatrix}$ with $I_3(M)=(a^2n, agn, bgn, a^2m-agl+bgk)=(a,b)\cap (a^2,g) \cap (n, a^2m-agl+bgk)$ and $I_2(M)=(a^2, al-bk, ak, ag, bg, gk, gl, am, bm, an, bn, gn) = (a,b,g) \cap (a^2, k^2, ak, g,m,n, al-bk) \cap(a,b,k,l,n)$. Since ht$(a,b,g)=3$ and $a^2, bg \in I_2(M)$, then the only two cases to study are whenever $I_2(M) \subset (a,b)$ or $(a,g)$. The other cases give us either   ht$(a,b,g)<3$ or ht$(I_3(M))=1$. If $I_2(M) \subset (a,b)$, then $gk, gl, gn \in (a,b)$ which implies $k,l,n \in (a,b)$. Similarly when $I_2(M) \subset (a,g)$, we get $k,m,n \in (a,g)$. Both cases put us in \ref{form1}.
 Now, we suppose that $f \in (b)$, then $M = \begin{pmatrix} a&b&0&0\\ 0&b&g&0 \\ k&l&m&n \end{pmatrix}$. We repeat the same study as above. We have $I_3(M)=(bgk - agl + abm, abn, agn, bgn)=(a,b)\cap(a,g) \cap (b,g) \cap (n, abm-agl+bgk)$ and $I_2(M)=(ab, al-bk, bk, ag, am, gk, bg, bm, bm-gl,  an, bn, bn, gn)= (a, b, g)\cap (a,g,k,m,n) \cap (b,g,l,m,n)\cap (a,b,k,l,n)$. Again, since ht$(a,b,g)=3$ and $ag, bg \in I_2(M)$ then we study the cases when $I_2(M) \subset (a,b), (a,g)$ or $(b,g)$. We get $k,l,n \in (a,b)$, $k,m,n \in (a,g)$ or $l,m,n \in (b,g)$ respectively. This puts us one more time in \ref{form1}. 
  
 $\bullet$ If ht$(a,b,f,g)=4$, then $M = \begin{pmatrix} a&b&0&0\\ 0&f&g&0 \\ k&l&m&n \end{pmatrix}$ with $I_2(M) = (af, al-bk, fk, ag, am, gk, bg, bm, fm-gl, an, bn, fn, gn)= (a,b,f,g)\cap (a,b,k,n  ,fm-gl) \cap (a,g,k,m,n) \cap (al-bk, f,g,m,n)$ Since ht$(a,b,f,g)=4$ and $af,ag, bg \in I_2(M)$ then the only cases to study are when $I_2(M) \subset (a,b), (a,g)$ or $(b,g)$, we note that it would be impossible for $I_2(M)$ to be a subset of $(a,f), (b,g)$ nor $(b,f)$. If $I_2(M) \subset (a,b)$, then $fk, fm, fn, gl, gk, gn \in (a,b)$ which implies $k,l,m,n \in (a,b)$. This puts us in \ref{form1}. If $I_2(M) \subset (a,g)$, then $ fk, fm, fn, bk, bm,  bn, \in (a,g)$ and $k,m,n \in (a,g)$. Hence the only two possibilities for $M$ are $M = \begin{pmatrix} a&b&0&0\\ 0&f&g&0 \\ 0&l&a&g \end{pmatrix}$ or  $M = \begin{pmatrix} a&b&0&0\\ 0&f&g&0 \\ g&l&0&a \end{pmatrix}$, otherwise ht$(I_3(M))=1$. We can rewrite both matrices $M$, after row/column operations and a linear change of variables, as $M = \begin{pmatrix} b&a&0&0\\ f&a&g&0 \\ l&0&a&g \end{pmatrix}$ or $M = \begin{pmatrix} b&0&a&0\\ f&g&a&0 \\ l&0&g&a \end{pmatrix}$  respectively. Both matrices are reduced to \ref{bg}. Finally, the case when $I_2(M) \subset (f,g)$ is identical to $I_2(M) \subset (a,b)$ by a linear change of variables.   \\
 -We then study the case when $M = \begin{pmatrix} 0&0&a&b\\ e&f&g&0 \\ k&l&m&n \end{pmatrix}$. Since ht$(e,f,g)=2$, we assume $g=0$. We write $M = \begin{pmatrix} a&b&0&0\\ 0&0&e&f \\ k&l&m&n \end{pmatrix}$.\\
  $\bullet$  If ht$(a,b,e,f)=2$, then $M=\begin{pmatrix} a&b&0&0\\ 0&0&a&b \\ k&l&m&n \end{pmatrix}$ with ht$(a,b,k,l,m,n)\geq 5$ which is case \ref{abab1}. \\
  $\bullet$ If ht$(a,b,e,f)=3$, then we may assume $e\in (a)$, and $M=\begin{pmatrix} a&b&0&0\\ 0&0&a&f \\ k&l&m&n \end{pmatrix}$. Since $ab,af$ and $a^2 \in I_2(M)=(al-bk, a^2 , am, ak, ab, bm, al, af, an, fk, bf, bn, fl, an-fm)= (a,b,f) \cap (a,b,k,l,m) \cap (a,f,k,m,n) \cap (a^2, b, k, f, l, m, n)$ and ht$(a,b,f)=3$ then the only cases to study are whenever  $I_2(M) \subset (a,b)$ or $(a,f)$. If $I_2(M) \subset (a,b)$ or $(a,f)$, then $fm, fk, fl \in (a,b)$ or $bk, bm,  bn \in (a,f)$ respectively. Hence $k,l,m \in (a,b)$ or $k, m, n \in (a,f)$, and we get back to \ref{form1}. \\
  $\bullet$ If ht$(a,b,e,f)=4$, then we write $M=\begin{pmatrix} a&b&0&0\\ 0&0&e&f \\ k&l&m&n \end{pmatrix}$. Similarly to the above cases, we have $ae, af, be, bf \in I_2(M) =(al-bk, ae, am, ek, be, bm, el, af, an, fk, bf,  bn, fl, en-fm)= (a,b,e,f)\cap (al-bk, e, f, m, n) \cap (fm-en, a, b, k, l)$. We only study the case when $I_2(M ) \subset (a,b)$. We get $k,l,m,n \in (a,b)$. This puts us in case \ref{form1}. \\
 \noindent  1b) \underline{ht$(e,f,g)=3$ and ht$(k,l,m,n)\geq 3$:} --Suppose $M = \begin{pmatrix} a&b&0&0\\ e&f&g&0 \\ k&l&m&n \end{pmatrix}$ with $n \neq 0$ and $I_2(M)=(af-be, al-bk, el-fk,em-gk,fm-gl, ag, am, bg, bm,  an,  bn,en, fn, gn)$.
 We begin by simplifying the matrix $M$ by looking at $M_3$. We know that  ht$(I_2(M_3))=2$, so $I_2(M_3)$ should be included in a height two prime ideal which is either generated by two linear forms or by one linear and one quadric. Looking at the form of the matrix $M$, we study the cases when $I_2(M_3) \subset (e,f),(e,g), (el-fk, n)$ and $(em-gk,n)$. All other cases put us in these mentioned cases or back to 1a) . Suppose $I_2(M) \subset (e,f)$, then $n \in (e,f)$ and $gk,gl \in (e,f)$. We only study the case when $n=e$, since when $n=f$ is similar. Suppose $n=e$, then $k,l \in (e,f)$. We may assume that $l=0$ by a row/column operations, and the only possibility for $M$ is $M = \begin{pmatrix} a&b&0&0\\ e&f&g&0 \\ f&0&m&e \end{pmatrix}$. We write $M=\begin{pmatrix} 0&0&a&b\\ m&e&f&0 \\ g&0&e&f \end{pmatrix}$ after row/column operations, and we look at $I_2(M)$. We have $ag, bg \in I_2(M)$, which implies that $a,b \in (e,f)$. Hence we either get back to 1a) or we get ht$(I_2(M))=1$ after row/column operations. Suppose $I_2(M_3) \subset (e,g)$, then $fk, fm, fn \in (e,g)$. Suppose $n=e$. We may assume that $m=0$ and $k=g$. Hence $M=\begin{pmatrix} a&b&0&0\\ e&f&g&0 \\ g&l&0&e\end{pmatrix}$, which can be written as $M=\begin{pmatrix} b&0&a&0\\ l&e&g&0 \\ f&0&e&g \end{pmatrix}$ after row/column operations. Since $af, al \in I_2(M)$ then $a \in (e,g)$ which put us in \ref{bg}. When $n=g$, we get $m=e$ and $k=0$. Hence $M=\begin{pmatrix} b&a&0&0\\ f&e&g&0 \\ l&0&e&g \end{pmatrix}$ after row/column operations, and again $a \in (e,g)$. This puts us in \ref{bg}. When $I_2(M_3) \subset (el-fk,n)$, then either $M = \begin{pmatrix} a&b&0&0\\ e&f&n&0 \\ k&l&0&n \end{pmatrix}$ or $M = \begin{pmatrix} a&b&0&0\\ e&n&f&0 \\ k&0&l&n \end{pmatrix}$. In the former case, ht$(I_2(M))=1$ and we are done. In the latter case we get $n^2 \in I_2(M)$. So we study the cases when $I_2(M) \subset (a,n), (b,n)$ or $(el-fk,n)$. When $I_2(M) \subset (a,n)$ then $el-fk \in (a,n)$. We may assume $f=k=a$ but $bl \in (a,n)$. Hence either $b=n$ or $l \in (a,n)$. We get back to case 1a. Similarly when $I_2(M) \subset (b,n)$, we get back to case 1a.
  When $I_2(M) \subset (el-fk,n)$, we get $ af, bf \in I_2(M)$. Hence $a,b \in (n)$ and ht$(I_2(M))=1$. The case when $I_2(M) \subset (em-gk, n)$ is treated the same way as whenever $I_2(M) \subset (el-fk, n)$, so we omit the details.\\
 
 \noindent -- Now we suppose $M = \begin{pmatrix} 0&0&a&b\\ e&f&g&0 \\ k&l&m&n \end{pmatrix}$. We conduct the same proof as above. Since ht$(I_2(M_3))=2$ and ht$(e,f,g)=3$, we study the cases when $I_2(M_3) \subset (e,f), (e,g)$, $(em-gk,n)$ and $(el-fk, n)$. When $I_2(M_3) \subset (e,f)$, we get $gl, gk, gn \in (e,f)$ and hence $k,l,n \in (e,f)$. We may assume that $n \neq 0$ since otherwise we go back to case 1a) after row/column operations because $k,l \in (e,f)$. We assume $n=e$, the case when $n=f$ is similar and we will omit the details. When $n=e$, the two possibilities for the matrix $M$ would be $M = \begin{pmatrix} 0&0&a&b\\ e&f&g&0 \\ f&0&m&e \end{pmatrix}$ or $M = \begin{pmatrix} 0&0&a&b\\ e&f&g&0 \\ 0&f&m&e \end{pmatrix}$. These matrices can be written as $M = \begin{pmatrix} a&b&0&0\\ m&e&f&0 \\ g&0&e&f \end{pmatrix}$ and $M = \begin{pmatrix} a&b&0&0\\ m&e&f&0 \\ g&0&f&e \end{pmatrix}$ after row/column operation. Since $bg, bm \in (e,f)$ then $b \in (e,f)$, and this puts us in \ref{bg}. When $I_2(M_3 ) \subset (e,g)$,  then $k,m,n \in (e,g)$. Again $n \neq 0$ and we study the case when $n =g$ first. The possibilities for the matrix $M$ are either $M = \begin{pmatrix} 0&0&a&b\\ e&f&g&0 \\ 0&l&e&g \end{pmatrix}$ or $M = \begin{pmatrix} b&0&a&b\\ e&f&g&0 \\ 0&l&e&g \end{pmatrix}$. In the former case we get $(a,b) \subset (e,g)$ and hence we are either back to case 1a) after row/column operations or we get ht$(I_2(M))=1$. In the latter case, we also get $(a,b) \subset (e,g)$ and back to case 1a) after row/column operations.   When $n=e$, we get $M = \begin{pmatrix} 0&0&a&b\\ e&f&g&0 \\ g&l&0&e \end{pmatrix}$ and $(a,b) \subset (e,g)$. We get back to case 1a)  after row/column operations.\\
  When $I_2(M_3) \subset (el-fk,n)$, then either $M = \begin{pmatrix} 0&0&a&b\\ e&n&f&0 \\ k&0&l&n \end{pmatrix}$ or $M = \begin{pmatrix} 0&0&a&b\\ e&f&n&0 \\ k&l&0&n \end{pmatrix}$. In both cases, we have $n^2 \in I_2(M)$. So we study the cases when $I_2(M) \subset (a,n), (b,n)$ and $(el-fk,n)$. In both matrices, when $I_2(M) \in (a,n)$ or $(b,n)$, we get to case back 1a. When $I_2(M) \subset (el-fk,n)$, we get
  $a,b \in (n)$ and we get ht$(I_2(M))=1$. The case when $I_2(M_3) \subset (em-gk,n)$ is identical and treated the same way as $I_2(M_3) \subset (el-fk,n)$, so we omit the details.

\noindent  \underline {Case 2):} Suppose $M = \begin{pmatrix} a&b&c&0\\ e&f&g&0 \\ k&l&m&n \end{pmatrix}$. Again we have ht$(I_2(M_1))=$ht$(I_2(M_3))=2$. We may assume that ht$(a,b,c)=$ ht$(e,f,g)=3$ and ht$(k,l,m,n) \geq 3$, or else we are back to Case 1. Since ht$(I_2(M_3))=2$, then we study the cases when $I_2(M_3) \subset (e,f)$ and $(el-fk, n)$. We follow the same argument as before. If $I_2(M_3) \subset (e,f)$, then $k,l,n \in (e,f)$ and we study the case when $n=e$ only (the case $n=f$ is identical). Hence, $M = \begin{pmatrix} a&b&c&0\\ e&f&g&0 \\ f&0&m&e \end{pmatrix}$ written as $M = \begin{pmatrix} c&b&a&0\\ g&f&e&0 \\ m&0&f&e \end{pmatrix}$. We get $ag, am, bg, bm \in (e,f)$ and hence $a,b \in (e,f)$. This puts us in \ref{bg}.  When $I_2(M_3) \subset (el-fk,n)$, we write $M = \begin{pmatrix} a&b&c&0\\ e&f&n&0 \\ k&l&0&n \end{pmatrix}$. We have $n^2 \in I_2(M)$. So we investigate the cases whenever $I_2(M) \subset (a,n), (c,n), (af-be, n)$ and $(el-fk, n)$.  When $I_2(M) \in (a,n)$, $(a,c)$ or $(af-be, n)$, we get that $el-fk$ belongs each of these prime ideals respectively. All these cases put us back in Case 1. When $I_2(M) \in (el-fk,n)$, we get $c \in (n)$ since $ck, cl \in (n)$ and ht$(I_2(M))=1$.

\noindent  \underline {Case 3):} Suppose $M = \begin{pmatrix} a&b&c&0\\ e&f&0&c \\ k&l&m&n \end{pmatrix}$. We may also assume ht$(a,b,c)=$ht$(e,f,c)=3$ and ht$(k,l,m,n) \geq 3$, or else we get back to Case 1. We have ht$(I_2(M_3))=2$, and again we study the cases when $I_2(M_3) \subset (e,f), (e,c)$ and $(el-fk, c)$. Suppose $I_2(M_3) \subset  (e,f)$, then since $c^2 \in I_2(M)$ we get $c \in (e,f)$ which puts is back in Case 1. If $I_2(M_3) \subset (e,c)$, then $fk, fm, fn \in (e,c)$ and $k,m,n \in (e,c)$. Suppose $m=c$ then $m=0$ after row operation and a linear change of variables. If $n=c$ then ht$(I_3(M))=1$, so we assume $n =e$. We also assume that $k=c$ or else we get back to Case 1. Hence $M = \begin{pmatrix} a&b&c&0\\ e&f&0&c \\ c&l&0&e \end{pmatrix}$ and again we go back to Case 1 after row/column operation.  We now suppose $m=e$, then $n=0$ by row/column operations and $k=c$ or else we get back to Case 1. Hence  $M = \begin{pmatrix} a&b&c&0\\ e&f&0&c \\ c&l&e&0 \end{pmatrix}$. Since $al, af  \in (e,c)$ and ht$(e,f,c)=$ht$(c,e,l)=3$, then $a \in (e,c)$ which puts us in any of the two forms of \ref{bg}.  
Finally suppose $I_2(M_3) \subset (el-fk, m)$, then since $c^2 \in I_2(M)$ we get $c=m$. We only study the case when $I_2(M) \subset (el-fk, m)$ since otherwise we get back to Case 1. We have $bn \in I_2(M)$ and ht$(a,b,c)=3$, so $n=m$ and hence $n=0$. So ht$(I_3(M))=1$ whenever $M=\begin{pmatrix} a&b&m&0\\ e&f&0&m \\ k&l&m&0 \end{pmatrix}$.\\

 \noindent  \underline {Case 4:)} Suppose $M = \begin{pmatrix} a&b&c&0\\ e&0&b&c \\ k&l&m&n \end{pmatrix}$. In this case, we do not need to study the cases when $I_2(M_3) \subset (e,b), (e,c), (b,c), (bn-mc,l) $ since $b^2, c^2 \subset I_2(M)=$. Hence we study the case when $I_2(M) \subset (b,c)$. We get $l,m,n \in (b,c)$ and we are in \ref{bg} after row/column operations. 
 
  \end{proof}
  
  \begin{remark}\label{4vars} We record the matrices of \ref{lemmaI_3(M)=2}{\color{blue}-}\ref{form1} as details are needed to complete the proof of Proposition \ref{mult3}:
  \begin {enumerate}
  \item $M = \begin{pmatrix} a&b&0&0\\ 0&a&b&0 \\ k&l&m&n \end{pmatrix}$ or $M = \begin{pmatrix} a&b&0&0\\ 0&0&a&b \\ k&l&m&n \end{pmatrix}$ with ${ \small \left\{ \begin{matrix} \mbox{ht}(a,b,n)=3, \hspace {0.2 cm} \mbox {and} \hspace {0.2 cm}  3 \leq \mbox{ht}(a,b,k,l,m,n) \leq 4 \hspace {0.2 cm} \mbox {or}  \\ n=a, \mbox{ht}(k,n)=2, \hspace {0.2 cm} \mbox {and} \hspace {0.2 cm}  3 \leq \mbox{ht}(a,b,k,l,m,n) \leq 4  \hspace {0.2 cm} \mbox {or} \\ n=b, \mbox{ht}(m,n)=2, \hspace {0.2 cm} \mbox {and} \hspace {0.2 cm}  3 \leq \mbox{ht}(a,b,k,l,m,n) \leq 4.  \end{matrix} \right.}$
  \item $M = \begin{pmatrix} a&b&0&0\\ 0&a&b&0 \\ 0&0&a&b \end{pmatrix}$ with ht$(a,b)=2$.
   \item $M = \begin{pmatrix} d&m&0&0\\ c&b&a&0 \\ 0&0&b&a \end{pmatrix}$ or  $M = \begin{pmatrix} d&m&0&0\\ c&0&b&a \\ 0&b&a&0 \end{pmatrix}$ with $m \in (a,b)$ and ht$(a,b,c,d)=4$
  \item $M = \begin{pmatrix} a&b&0&0\\ c&b&a&0 \\ d&0&b&a \end{pmatrix}$ or  $M = \begin{pmatrix} a&0&b&0\\ c&b&a&0 \\ d&0&b&a \end{pmatrix}$  with ht$(a,b,c)=$ht$(a,b,d)=3$
  \item $M = \begin{pmatrix} d&a&m&0\\ c&b&a&0 \\ 0&0&a&b \end{pmatrix}$ or $M = \begin{pmatrix} d&b&m&0\\ c&0&b&a \\ 0&a&b&0 \end{pmatrix}$ with $m \in (a,b)$ and ht$(a,b,c,d)=4$.
  \end{enumerate}
  \end {remark}

\begin{proposition} \label{mult3} Let $J$ be an $(x,y,z,w)$-primary with $e(S/J)=3$. Then one of the following holds
\begin{itemize} 
\item[I-] If $J$ is degenerate, then 
 \begin{enumerate}
  \item \label{3lf} \small $J=(x,y,z,w^3)$.
\item \label{2lf1}$J = (x,y)+(z,w)^2$.
 \item   \label{2lf2} $J=(x,y)+ (z^2, wz, w^3, w^2+az)$  with ht$(x,y,z,w,a)=5.$
 \item   \label{3lf1} $J=(x, y)+ (z^3, z^2w, zw^2, w^3, az+bw ) $ with ht$(x,y,z,a,b)=5.$
\item  \label{lf1} $J=(x)+ (y,z,w)^2+(by+cz+dw)$ with ht$(x,y,z,w,b,c,d) \geq 6$.
\item \label{lf2} $J=(x)+ (y,z)^2+( wy, wz, w^2+by+cz, ey+fz)$ with ht$(x,y,z,w,e,f)=6$
\item \label {lf3}$J=(x)+ y(y, z,w)+ (z,w)^3+(  ay+bz+cw, dy+z^2 )$ with { {ht$(x,y,z,w,b,c)=\\
 $ht$(x,y,z,w,c,d)=6$.}}
\item \label {lf4}$J=(x)+y(y, z, w)+(z,w)^3+  (ay+bz+cw, cy+z^2,by-zw)$ with \mbox {ht$(x,y,z,w,b,c)=6$}.
\item \label {lf5}$ J=(x)+y(y, z, w)+ (z,w)^3+(a y+bz+cw, dy+zw )$ with ht$(x,y,z,w,d,c,d)=7.$
\item \label {lf6}$J=(x)+y(y, z, w)+ (z,w)^3 +(a y+bz+cw, by+zw, cy-z^2 )$ with \mbox{ht$(x,y,z,w,b,c)=6$.}
\item \label {lf7}$J=(x)+y(y, z, w)+ (z,w)^3+  (a y+bz+cw, cy+zw, by-w^2 )$ with \mbox{ ht$(x,y,z,w,b,c)=6$.}
 \item \label {height2} All quadrics in $J$ generate an ideal of height at most three.
 \item \label {varss} All quadrics in $J$ can be expressed in terms of at most  $8$ variables.
 
\end{enumerate}
\vspace{0,5cm}
\item[II-] If $J$ is non-degenerate, then 
\begin{enumerate}
\item \label{Case1a} $J=(x,y,z,w)^2+(ax+by+cz+dw, ex+fy+gz+hw)$ with \small {ht$\left(I_2 \begin{pmatrix}a&b&c&d\\ e&f&g&h \end{pmatrix}\right) \geq 2$.}
\item \label{Case1b} $J=(x,y,z)^2+w(x,y,z)+(w^2+ ax+by+cz, ex+fy+gz, kx+ly+mz)$ with \mbox{ht$\left(I_2\begin{pmatrix} e&f&g\\k&l&m\end{pmatrix}\right) \geq 2$.} 
\item  \label{Case1c} $J=(x,y,z)^2+w(x,y,z,w)+( ax+by+cz, ex+fy, kx+fz, ky-ez)$ with ht$(e,f,k)=3$.
\item  \label{wz}  $J= (x,y)^2+(x,y)(z,w)+( ax+by+cz+dw, ex+fy+z^2+\alpha w^2, gx+hy+zw)$ with ht$(x,y,z,w, c,d)=6$, ht$(c,g,h) \geq 2$, ht$(d,g,h) \geq 2$, ht$(e,f,g,h) \geq 2$ and ht$\left(I_2  \begin{pmatrix}e&f\\ g&h \end{pmatrix}\right) =1$, ($\alpha \in k$).
\item \label {e=h} $J=(x,y)^2+ ( x,y)(z,w)+( ax+by+cz+dw, ex+z^2+ w^2, ey+zw, c^2y+cdx+d^2y)$ with ht$(c,d,e)=3$.
\item \label {e=h,0} $J=(x,y)^2+ ( x,y)(z,w)+(ax+by+cz+dw, ex+z^2, ey+zw, cx+dy)$ with ht$(c,d,e)=3$.
\item  \label {g=h=a} $J=(x,y)^2+ ( x,y)(z,w)+(  ax+by+cz+dw, ex+fy+ z^2+\alpha w^2,  cx+zw, dx-z^2 )$ with \mbox{ ht$(c,d)=$ht$(e,f)= 2$.}
\item \label {z^2, w^2=0} $J=(x,y)^2+ ( x,y)(z,w)+(ax+by+cz+dw, ex+fy, gx+hy+z^2)$ with ht$(c,d)=$ht$(e,f)=2$.
\item \label {z^2} $J=(x,y)^2+ ( x,y)(z,w)+ (ax+by+cz+dw, ex+fy+w^2, gx+hy+z^2)$ with ht$(x,y,z,w, c,d)=6$, ht$(c,e,f) \geq 2$, ht$(d,g,h) \geq 2$ and ht$\small \left( I_2 \begin{pmatrix}e&f\\ g&h \end{pmatrix} \right)=1$.
\item \label{z^2,e=a} $J=(x,y)^2+ ( x,y)(z,w)+ (ax+by+cz+dw, cx+w^2, gx+hy+z^2, dx-zw)$ with ht$(x,y,z,w,c,d)=6$ and ht$(d,g,h) \geq 2$.
 \item \label {height} All quadrics in $J$ generate an ideal of height at most three.
 \item \label {vars} All quadrics in $J$ can be expressed in terms of at most  $8$ variables.
 \end{enumerate}
 \end{itemize}
 \end{proposition}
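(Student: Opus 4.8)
The plan is to follow the architecture of Proposition~\ref{multiplicity2}: split on whether $J$ is degenerate, dispatch the degenerate case by reduction to smaller height, and treat the non-degenerate case by analyzing the local structure of $J$ at $\mathfrak p$ through matrices of linear forms. If $J$ contains a linear form I may assume after a change of coordinates that $x\in J$ and write $J=(x)+J'$, where $J'$ is the image of $J$ in $S/(x)$, a $(y,z,w)$-primary ideal of multiplicity $3$. I would then quote the height-three classification of the degree-two component for $(y,z,w)$-primary ideals of multiplicity at most $4$ from \cite{HMMS13} to read off $J'$ and adjoin $x$ to each resulting form; when $J$ carries two or three independent linear forms the reduction is iterated into the $(z,w)$-primary and $(w)$-primary cases of multiplicity $3$. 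This produces items I-\ref{3lf} through I-\ref{lf7}, with the catch-alls I-\ref{height2} and I-\ref{varss} absorbing the sub-cases of the \cite{HMMS13} list that are themselves height- or variable-bounded, and it is essentially bookkeeping once the reduction is set up.

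The real work is the non-degenerate case, and the organizing invariant is the colength at $\mathfrak p$. Since $e(S/\mathfrak p)=1$ we have $\lambda(S_{\mathfrak p}/J_{\mathfrak p})=3$, so the local Hilbert function of $S_{\mathfrak p}/J_{\mathfrak p}$ is either $(1,2)$ or $(1,1,1)$; equivalently, either $\mathfrak p^2\subseteq J$ or $\mathfrak p^2\not\subseteq J$. In the first case $J=(x,y,z,w)^2+(f_1,f_2)$ for two independent mixed quadrics $f_1=ax+by+cz+dw$ and $f_2=ex+fy+gz+hw$ with $2\times 4$ coefficient matrix $M$. If $\height(I_2(M))\ge 2$ the candidate ideal is unmixed of height four and multiplicity three (verified by the corresponding appendix lemma and identified with $J$ through Lemma~\ref{equalideals}), giving II-\ref{Case1a}; if $\height(I_2(M))=1$ I would run through the normal forms of Lemma~\ref{lemmaI_2}, each of which either forces an honest linear form into $J$, forces a third independent mixed quadric and hence multiplicity below $3$, or confines all quadrics to at most eight variables, landing in II-\ref{vars}.

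In the second case $\mathfrak p^2\not\subseteq J$, so exactly one pure-quadric direction survives modulo $J$, and the first task is to normalize the quadratic part: after a linear change of variables the degree-two monomials of $\mathfrak p^2$ lying outside $J$ are governed either by a single distinguished variable, giving the base $(x,y,z)^2+w(x,y,z)$ that feeds II-\ref{Case1b} and II-\ref{Case1c}, or by two distinguished variables, giving the base $(x,y)^2+(x,y)(z,w)$ that feeds II-\ref{wz} through II-\ref{z^2,e=a}. With the base fixed, $J$ is determined by three mixed-quadric directions together with the surviving quadric; I would reduce their $3\times 4$ coefficient matrix via Lemma~\ref{lemmaI_3(M)=2} (which itself rests on the $2\times 4$ analysis of Lemma~\ref{I_2(M_1)}), invoke the recorded matrices of Remark~\ref{4vars}, and cross the output with the possibilities for the surviving quadric, namely a square such as $w^2$, a product such as $zw$, or a rank-two combination $z^2+\alpha w^2$. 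At each node I would either match the configuration to one of II-\ref{Case1b} through II-\ref{z^2,e=a} and confirm height four and multiplicity three via the appendix and Lemma~\ref{equalideals}, derive a contradiction with non-degeneracy by invoking \cite[Lemma 4.1]{HMMS13} to exclude impossible height patterns, or collapse into the catch-alls II-\ref{height} and II-\ref{vars}.

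The main obstacle is precisely this last case. The proliferation of matrix normal forms in Lemma~\ref{lemmaI_3(M)=2}, the simultaneous tracking of several height hypotheses already visible in Remark~\ref{4vars}, and the coupling between the surviving quadric and the mixed-quadric matrix make it both the longest and the most error-prone part of the argument. I expect the proof to become manageable only by keeping the case tree disjoint and exhaustive and by isolating each surviving candidate's unmixedness and multiplicity verification into its own appendix lemma, so that the main text merely applies Lemma~\ref{equalideals} at each successful node.
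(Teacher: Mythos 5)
Your top-level split is, in substance, the same as the paper's: the dichotomy between local Hilbert function $(1,2)$ and $(1,1,1)$ is exactly the paper's dichotomy $e(S/(J:\mathfrak p))=1$ versus $e(S/(J:\mathfrak p))=2$, and your treatment of the $(1,2)$ branch (two mixed quadrics, $I_2$ of the $2\times 4$ matrix, Lemma \ref{lemmaI_2}, appendix verification plus Lemma \ref{equalideals}) and of the degenerate case (reduce modulo a linear form and quote the height-three classification of \cite{HMMS13}) both match or legitimately replace what the paper does. The gap is in the $(1,1,1)$ branch. You assert that, after a linear change of variables, the constant-coefficient quadrics of $\mathfrak p^2$ lying in $J$ form one of two bases, $(x,y,z)^2+w(x,y,z)$ or $(x,y)^2+(x,y)(z,w)$. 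This conflates the local statement (the degree-two piece of the associated graded of $S_{\mathfrak p}/J_{\mathfrak p}$ has length one, where coefficients range over units of $S_{\mathfrak p}$) with a global statement about quadrics with coefficients in $k$, and it is false in general: the ideal $J=(x,y,z,w)^3+(ax+by+cz+dw,\, ex+fy+gz+hw,\, kx+ly+mz+nw)$ of Lemma \ref{lemma22} is non-degenerate, $(x,y,z,w)$-primary of multiplicity $3$ with Hilbert function $(1,1,1)$, yet contains no nonzero constant-coefficient quadric whatsoever, so it fits neither of your bases and is unreachable in your case tree. Relatedly, you attach Lemma \ref{lemmaI_3(M)=2} and Remark \ref{4vars} to your two bases, whereas in the paper they are deployed precisely in this missing situation, where the base inside $J$ is $(x,y,z,w)^3$ together with three mixed quadrics.

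The idea your proposal is missing is the bootstrap through the colon ideal: $J:\mathfrak p$ is itself $(x,y,z,w)$-primary of multiplicity $2$, so the already-proved Proposition \ref{multiplicity2} classifies it into six shapes, and the inclusions $\mathfrak p\cdot(J:\mathfrak p)\subseteq J\subseteq J:\mathfrak p$ then dictate, for each shape, the large ideal guaranteed to sit inside $J$. Your two bases arise exactly from the two shapes in which $J:\mathfrak p$ is degenerate with three, respectively two, linear forms (the paper's Cases I and II); but the shapes where $J:\mathfrak p$ contains one linear form or is non-degenerate (the paper's Cases III--VI) produce bases of the form $(x,y,z,w)^3$ plus mixed quadrics, lead to the outcomes II-\ref{height} and II-\ref{vars} through entirely different matrix analyses (including the Hilbert-function subcases $(1,1,1)$ versus $(1,2,0)$ for the lifts of the generators of $J:\mathfrak p$), and cannot be recovered from a normalization of constant-coefficient quadrics because there need not be any. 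Without invoking the multiplicity-$2$ classification of $J:\mathfrak p$ (or supplying an equivalent argument), the non-degenerate $(1,1,1)$ branch of your proof is not exhaustive, and the hardest part of the proposition is exactly what falls through.
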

 
 \begin{proof} \normalsize Since $J \subset J:\mathfrak p $ then $e(S/J) > e(S/(J:\mathfrak p))$. For that $e(S/(J:\mathfrak p))=1$ or $2$.\\
  If $e(S/(J: \mathfrak p))=1$, then $(J:\mathfrak p)=(x,y,z,w)$ and $(x,y,z,w)^2 \subset J$. If $J$ contains three linear forms then $ (x,y,z,w^2) \subset J$ and $e(S/(x,y,z,w^2))= 2 \geq e(S/J)$, which is a contradiction. If $J$ contains two linear forms then $J=(x,y)+(z,w)^2$, which puts us in case {\color{blue}I-}\ref{2lf1}. If $J$ contains one linear form then $(x)+(y,z,w)^2 \subset J$, and $J$ has at least one more element of the form $q=by+cz+dw$.  The ideal $(y,z,w)^2+(x, by+cz+dw)$ is unmixed whenever ht$(x,y,z,w,b,c,d) \geq 6$ by Lemma \ref{lemma4}, which is case {\color{blue}I-}\ref{lf1}. If ht$(x,y,z,w,b,c,d)=5 $, then we can modify $c,d$ modulo $(x,y,z,w)$ to assume that $c$ and $d$ are multiple of $b$. Hence $q=bl$ for some linear form $l$, and $J$ contains another linear form, which is a contradiction. If $J$ does not contain any linear form, then since $e(S/(x,y,z,w)^2)=5$, $J$ must contain at least two generators of the form $(ax+by+cz+dw,ex+fy+gz+hw)$. If ht$\left(I_2 \begin{pmatrix}a&b&c&d\\ e&f&g&h \end{pmatrix}\right) \geq 2$, then $(x,y,z,w)^2+(ax+by+cz+dw, ex+fy+gz+hw)$ is unmixed of multiplicity $3$ and hence equal to $J$ by Lemma \ref{lemma5}, which puts us in case {\color{blue}II-}\ref{Case1a}. When ht$\left(I_2 \begin{pmatrix}a&b&c&d\\ e&f&g&h \end{pmatrix}\right)= 1$ modulo $(x,y,z,w)$, then $M$ has one of the form of Lemma \ref{lemmaI_2}. In the first two cases, $J$ contains a linear form. In the last case, $(x,y,z,w)^2+(ax+by, ex+bz)\subset J$ and $b(ey-az) \in J$. Since $b \notin J$ then $ey-az \in J$. Hence by lemma  \ref{lemma6}, $J=(x,y,z,w)^2+(ax+by, ex+bz, ey-az)$, which puts us in {\color{blue}II-}\ref{vars}.  \\
  
  Suppose $e(S/(J: \mathfrak p)=2$ then there are $6$ cases to consider according to Proposition \ref{multiplicity2}. The case when all quadrics $J: \mathfrak p$ are written in terms of $8$ variables put us in {\color{blue}I-}\ref{varss} or {\color{blue}II-}\ref{vars} since $J \subset J: \mathfrak p$. \\
  
  \underline{Case I. $J: \mathfrak p  = (x,y,z,w^2)$}\\
  In this case, we have $(x,y,z)^2+(wx,wy,wz,w^3) \subset J \subset (x,y,z,w^2)$. If $J$ contains three independent linear forms then $J = (x,y,z,w^3)$, which is case {\color{blue}I-}\ref{3lf}. If $J$ contains two linear forms say $x, y$ then $(x,y,z^2, wz, w^3) \subset J$. Since this ideal on the left has multiplicity $4$, then we need one additional generator of the form $w^2+az$. By Lemma \ref{lemma7}, the ideal $(x,y,z^2, wz, w^3, w^2+az)$ is unmixed of multiplicity $3$, and hence equal to $J$. If ht$(x,y,z,w,a)=5$ we are in case {\color{blue}I-}\ref{2lf2}, otherwise when $a \in (x,y,z,w)$ we are in case {\color{blue}I-}\ref{2lf1}. If $J$ contains a linear form $x$, then $(x)+(y,z)^2+(wy, wz, w^3) \subset J$. Since the multiplity of the ideal on the left is $5$, then either $J$ has a generator of the form $w^2+by+cz+dw$ with $d\neq 0$, or two generators of the form $w^2+by+cz$ and $ey+fz$. In the first case, we have $dw^2 \in J$ since $d \notin (x,y,z,w)$ then $w^2 \in J$. We get the ideal  $(x,by+cz+dw)+(w,y,z)^2$, which by lemma \ref{lemma8}, is unmixed of height four and multiplicity 3 when ht$(x,y,z,w,b,c,d)\geq 6$. This puts us in case {\color{blue}I-}\ref {lf1}, or back to {\color{blue}I-}\ref{2lf1} when ht$(x,y,z,w,b,c,d)=5$.  In the second case, one of the quadrics must contain $w^2$, or else $J$ contains a second linear form. The ideal $(x)+(y,z)^2+( wy, wz, w^2+by+cz, ey+fz)$ with ht$(x,y,z,w,e,f)=6$ is umixed of multiplicity $3$ by Lemma \ref {lemma9}, and hence is equal to $J$. This puts us in case {\color{blue}I-}\ref{lf2}. If ht$(x,y,z,w,e,f)=5$, then $J$ contains a second linear form $y$ or $z$.  Suppose $J$ doesn't contain a linear form. Since the multiplicity of the ideal  $(x,y,z)^2+(wx,wy,wz,w^3)$ is $6$, then $J$ must contain either two generators of the form $w^2+ax+by+cz+dw$ and $ex+fy+gz+hw$ with $d \neq 0$ or $h \neq 0$, or three linearly independent quadrics of the from  $w^2+ ax+by+cz$, $ex+fy+gz$ and $kx+ly+mz$. In the first case $w^2 \in J$, and by lemma \ref{lemma5}, we are in case {\color{blue}II-}\ref{Case1a} when ht$\left(I_2 \begin{pmatrix}a&b&c&d\\ e&f&g&h \end{pmatrix} \right) \geq 2$. Otherwise, either $J$ contains a linear form or $J$ is in {\color{blue}II-}\ref{vars}. In the second case, by Lemma  \ref{lemma10}, the ideal $(x,y,z)^2+(wx,wy,wz,w^2, ax+by+cz, ex+fy+gz, kx+ly+mz)$ is unmixed of multiplicity $3$ when ht$\left(I_2\begin{pmatrix} e&f&g\\k&l&m\end{pmatrix}\right) \geq 2$ modulo $(x,y,z,w)$,  and hence is equal to $J$. This puts us in case {\color{blue}II-}\ref{Case1b}. If ht$\left(I_2\begin{pmatrix} e&f&g\\k&l&m\end{pmatrix}\right) =1$, then by \cite[Lemma 4.1]{HMMS13}, $M= \begin{pmatrix} e&f&0\\k&0&f\end{pmatrix}$ with ht$(e,f,k)=3$ or else $J$ contains a linear form. In that case, $(x,y,z)^2+(wx,wy,wz,w^2+ ax+by+cz, ex+fy, kx+fz)\in J$ and $f(ky-ez) \in J$. Since $f \notin (x,y,z,w)$, then $ky-ez \in J$. By Lemma \ref{lemma11}, $(x,y,z)^2+(wx,wy,wz,w^2+ ax+by+cz, ex+fy, kx+fz, ky-ez)$ is unmixed of multiplicity $3$, and hence equal to J. This puts us in case {\color{blue}II-}\ref{Case1c}.\\
  
   \underline{Case II. $J: {\mathfrak p} = (x,y,z^2, zw, w^2, az+bw )$, ht$(x,y,z,w,a,b) =6$.} \\
 In this case, $(x^2, xy, xz, xw, y^2, yz, yw, z^3, z^2w, zw^2,  w^3, z(az+bw),  w(az+bw)) \subset J \subset J: \mathfrak{p}$. 
   If $J$ contains the two linear forms $x$ and $y$ then $(x, y, z^3, z^2w, zw^2, w^3, z(az+bw),  w(za+bw)) \subset J$. There is at least one generator of the form $\alpha z^2+\beta zw + \gamma w^2 + \delta (az+bw)$, which we may assume to be quadric, otherwise $J$ is in {\color{blue} I-}\ref{height2}. If $\delta=0$, then $(x,y,z,w)^2_{\mathfrak{p}} \subset J_{\mathfrak{p}}$ and we get $(x,y)+(z,w)^2 \subset J$ since $J$ is $\mathfrak{p}$-primary. Both ideals have the same height and multiplicity, and hence are equal. So $J=(x,y)+(z,w)^2 $, which is impossible since $J:\mathfrak{p}= (x,y,z^2, zw, w^2, az+bw )$. So we assume $ \delta=1$. Let $a' = a+\alpha z+ \beta w$ and $b'=  b+ \gamma w$ we get $(x, y, z^3, z^2w, zw^2, w^3, az+bw ) \subset J$. The ideal on the left is unmixed by Lemma  \ref{lemma12} whenever ht$(x,y,z,w,a,b)=6$. Since both ideals have the same height and multiplicity then they are equal. Our primary ideal falls into  {\color{blue} I-}\ref{3lf1}. If ht$(x,y,z,w,a,b) =5$ then $J$ contains the linear form $z+w$, and $J$ will have the form of {\color{blue} I-}\ref{3lf} after a linear change of variables. 
   
  Suppose $x \in J $ and $y \notin J$, then $(x,y^2, yz, yw, z^3, z^2w, zw^2,  w^3, z(az+bw),  w(az+bw)) \subset J$. Since the multiplicity of the ideal on the left is $5$, then there exists at least $2$ additional generators of the form $\alpha y+\beta z^2+\gamma wz+\theta w^2+\delta (az+bw)$, $cy+dz^2+ewz+fw^2$  with $\deg c =\deg \alpha =1$. The remaining coefficients are in $k$ otherwise we will be in case  {\color{blue} I-}\ref{height2}. 
   Suppose $\delta =0$ for all such quadrics, then since there are at most three of them in $J$, we get  $gy \in J$ for some $g \notin (x,y,z,w)$ which implies $y \in J$ impossible. Hence, we may assume that $\delta =1$. After the following linear change of variables $(a'= a+\beta z +\gamma w $ and $b'= b+ \theta w)$, we can consider $\beta=\gamma=\theta=0$. After a further linear change of variables,  the second quadric has either one of the two following forms $cy+ z^2$ or $cy+zw$  since any polynomial with two variables over an algebraically closed field splits. 
Suppose the second quadric is $cy+ z^2$ then $(x,y^2, yz, yw, z^3, z^2w, zw^2,  w^3, \alpha y+az+bw, cy+z^2 ) \subset J$. Note that ht$(x,y,z,w,a,b)=6$. If ht$(x,y,z,w,b,c)=6$, then by Lemma \ref{lemma13}, we get $J=(x,y^2, yz, yw, z^3, z^2w, zw^2,  w^3, \alpha y+az+bw, cy+z^2 )$ which is case {\color{blue} II-}\ref {lf3}.  When  ht$(x,y,z,w,b,c)=5$, then we may assume either $c=0$ or $b=c$. If $c=0$, then $(y,z,w)^2+(x,\alpha y + az+bw) \subset J$.  Since the ideal on the left is unmixed by lemma \ref{lemma4} of multiplicity $3$ then it is equal to $J$, which is case {\color{blue} I-}\ref{lf1}. If $b=c$ then $aby -bzw \in J $ with $b \notin \mathfrak{p} $, then $ay-zw \in J$. By lemma \ref{lemma15} $J=(x,y^2, yz, yw, z^3, z^2w, zw^2,  w^3, \alpha y+az+bw, by+z^2, ay-zw)$, which is {\color{blue} I-}\ref{lf4}.\\
Similarly, if the second quadric is $cy+zw$, then $(x,y^2, yz, yw, z^3, z^2w, zw^2,  w^3, \alpha y+az+bw, cy+zw ) \subset J$. If ht$(x,y,z,w,a,b,c)=7$, then by lemma \ref{lemma14}, both ideals are equal and this falls in case {\color{blue} I-}\ref{lf5}. Otherwise, we may assume that $c=a$, $c=b$ or $c=0$. If $c=a$ or $c=b$ then by lemma \ref{lemma15}, $J$ has the form of {\color{blue} I-}\ref{lf6} or {\color{blue} I-}\ref{lf7} . If $c=0$, then $z^2$ and $w^2 \in J$, and $J$ has the form of {\color{blue} I-}\ref{lf1}  by lemma  \ref{lemma4}. \\

If $x,y \notin J$ then $(x^2, xy, xz, xw, y^2, yz, yw, z^3, z^2w, zw^2,  w^3, z(az+bw),  w(az+bw)) \subset J$. The multiplicity of the ideal on the left is $6$, so  there exists at least three generators of the form $cx + d y+a_1 z^2+a_2 wz+a_3 w^2+\delta (az+bw)$, $ex + fy+\alpha_1z^2+\alpha_2wz+b_3w^2$ and $gx+ hy+c_1z^2+c_2wz+c_3w^2$. We may assume that $c, d, e, f, g, h  \in S_1$ and the rest of the coefficients are in $k$, or else we get to case {\color{blue}II-\ref{height}}. We may also assume that $\delta \neq 0$ since otherwise when $\delta = 0$ for all such quadrics, then $(x, y, z^2, zw, w^2)_{\mathfrak {p}} \subset J_{\mathfrak{p}}$. We get $(x, y, z^2, zw, w^2) \subset J$ since $J$ is $\mathfrak{p}$-primary,  which is a contradiction. Take $\delta = 1$.  After the following linear change of variables: $a'=a+a_1z+a_2w$ and $b'= b+a_3w$, we may assume that $a_1=a_2=a_3=0$. After a further linear change of variables, we may assume that the last quadric has the form of either $gx+ hy+wz$ or $gx+ hy+z^2$. Suppose the last quadric is $gx+ hy+wz$. Hence $(x^2, xy, xz, xw, y^2, yz, yw, z^3, z^2w, zw^2, w^3, cx+dy+az+bw, ex+fy+\alpha_1z^2+\alpha_2w^2, gx+hy+zw ) \subset J$. Further, we may assume that $\alpha_1$ or $\alpha_2 \neq 0$, or else the multiplicity of the ideal on the left is $4$. We consider $\alpha_1=1$.
If ht$(a,g,h) \geq 2$, ht$(b,g,h) \geq 2$, ht$(e,f,g,h) \geq 2$ and ht$\left(I_2  \begin{pmatrix}e&f\\ g&h \end{pmatrix}\right) =1$ then, by Lemma \ref{lemma16}, $J= (x^2, xy, xz, xw, y^2, yz, yw, cx + dy+az+bw, ex+fy +  z^2+\alpha w^2, gx+hy+zw)$ with $\alpha \in k$.  This is case {\color{blue} II-}\ref{wz}.
If $I_2  \begin{pmatrix}e&f\\ g&h \end{pmatrix} =0$, then either $J$ is degenerate or we get back to case {\color{blue} II-}\ref{Case1a}.  If ht$(e,f,g,h)=1$, then the only case to consider is whenever $f=g=0$ and $h=e$, or else $J$ is degenerate.  By Lemma \ref{lemma17}, if ht$(a,b,e)=3$ then $J=(x^2, xy, xz, xw, y^2, yz, yw, cx+dy+az+bw, ex+z^2+\alpha_2 w^2, ey+zw,  \alpha_2 a^2y+abx+b^2y)$ or $J=(x^2, xy, xz, xw, y^2, yz, yw, cx+dy+az+bw, ex+z^2+\alpha_2 w^2, ey+zw,  \alpha_2 a^2y+abx+b^2y)$ when $\alpha =0$. This puts us in case  {\color{blue} II-}\ref{e=h} or  {\color{blue} II-}\ref{e=h,0} when $\alpha=0$. Both cases ht$(a,g,h)=1$ or ht$(b,g,h)=1$ are equivalent by a linear change of variables. Suppose ht$(a,g,h)=1$ and $g=h=a$.  By a linear change of variables, we may assume $g=a$ and $h=0$. Hence, by Lemma \ref{lemma18}, $J=(x^2, xy, xz, xw, y^2, yz, yw, cx+dy+az+bw, ex+fy+\alpha_1 z^2+ \alpha_2 w^2,  ax+zw, bx-z^2 )$ with ht$(e,f)= 2$ or else $J$ is degenerate . This puts us in case  {\color{blue}II-\ref {g=h=a}}.\\ Suppose the second quadric is $gx+hy+z^2$, then $(x^2, xy, xz, xw, y^2, yz, yw, z^3, z^2w, zw^2, w^3, cx+dy+az+bw, ex+fy+\beta zw+ \alpha w^2, gx+hy+z^2 ) \subset J$. We may assume $\beta=0$ or else we get back to the first case by a linear change of variables. If $\alpha=0$ then $J=(x^2, xy, xz, xw, y^2, yz, yw, cx+dy+az+bw, ex+fy, gx+hy+z^2)$ when ht$(e,f)=2$ by Lemma \ref{lemma19}. This is case {\color{blue} II-}\ref{z^2, w^2=0}.  Suppose $\alpha \neq0$. If ht$(a,e,f) \geq 2$, ht$(b,g,h) \geq 2$ and ht$\left( I_2 \begin{pmatrix}e&f\\ g&h \end{pmatrix} \right)$,  then $J=(x^2, xy, xz, xw, y^2, yz, yw, cx+dy+az+bw, ex+fy+w^2, gx+hy+z^2)$ by Lemma \ref{lemma20}. This puts us in case {\color{blue}II-}\ref{z^2}. If $ I_2 \begin{pmatrix}e&f\\ g&h \end{pmatrix}=0$, then either $J$ is degenerate or we we get back to case {\color{blue} II-}\ref{Case1a}.  If ht$(a,e,f)=1$ (or equivalently ht$(b,g,h)=1$ by a change of variables) , then  by Lemma \ref{lemma21},  $J=(x^2, xy, xz, xw, y^2, yz, yw, cx+dy+az+bw, ax+w^2, gx+hy+z^2, bx-zw)$ with ht$(b,g,h) \geq 2$. This is case {\color{blue}II-}\ref{z^2,e=a}. If ht$(a,e,f)=$ht$(b,g,h)=1$, then we get back to case {\color{blue} II-}\ref{vars}.

\vskip 0,3cm

   \underline{Case III. $J:P =(y,z,w)^2+ (x,by+cz+dw, ey+fz+gw)$.}\\
   In this case, we have $(x,y,z,w)(y,z,w)^2+ (x,y,z,w)(x,by+cz+dw, ey+fz+gw)$ with \\
   ht$\left(x,y,z,w,I_2\begin{pmatrix}b&c&d\\ e&f&g \end{pmatrix}\right) \geq 6$. If $J$ contains $x$ then $(x)+(y, z, w)^3+(y, z, w)(by+cz+dw, ey+fz+gw) \subset J$ and the study of this case was done in case III of \cite [Proposition 4.4]{HMMS13}. This puts us in {\color{blue}I-\ref{height2}}. Suppose $x \notin J$ then the above ideal on the left has multiplicity $6$. The Hilbert function of the ideal on the left localized at $\mathfrak {p}$ is $(1, 4,1)$. So $J$ contains at least two quadrics which, after a linear change of variables, we may consider to be $ax+by+cz+dw, hx+ey+fz+gw$. Hence $(y,z,w)^3+ (x^2,xy,xz,xw)+(ax+by+cz+dw, hx+ey+fz+gw) \subset J$ and the multiplicity of the ideal on the left is $4$. If $J$ contains one more quadric of the above form then the multiplicity will be $2$. On the other hand, we have $(y,z,w)(by+cz+dw, ey+fz+gw) \subset J$ which leads to $(x,y,z,w)^2_{\mathfrak{p}} \subset J_{\mathfrak{p}}$. Since $J$ is $\mathfrak{p}$-primary, then $(x,y,z,w)^2+ (ax+by+cz+dw, hx+ ey+fz+gw) \subset J$. If ht$\left(I_2\begin{pmatrix}a&b&c&d\\ h&e&f&g \end{pmatrix}\right) \geq 2$ then by lemma \ref{lemma5} both ideals are equal which puts us in case {\color{blue}II-\ref{Case1a}}. If  ht$\left(I_2\begin{pmatrix}a&b&c&d\\ h&e&f&g \end{pmatrix}\right)=1$, then by lemma \ref{lemmaI_2} either $J$ is degenerate or  the quadrics in $J$ are expressed by at most $8$ variables which puts us in case {\color{blue}II-\ref{vars}}.\\
   
   \underline{Case IV. $J:P = (x, y, z,w)^2+ (ax+by+cz+dw, ex+fy+gz+hw, kx+ly+mz+nw)$.}\\
  We have $ (x, y, z,w)^3+ (x,y,z,w)(ax+by+cz+dw, ex+fy+gz+hw, kx+ly+mz+nw) \subset J$ with ht$(x,y,z,w,I_3(M))\geq 6$, where $M=\begin{pmatrix} a&b&c&d\\ e&f&g&h\\k&l&m&n \end{pmatrix}$. Since the multiplicity of the ideal on the left is $6$, then there exist at least three additional quadrics, which after a linear change of variables, we can take to be of the form $ax+by+cz+dw+q, ex+fy+gz+hw+q'$ and $kx+ly+mz+nw+q"$, $q,q',q" \in (x,y,z,w)^2$. After relabeling $a..h, k..n$, we can assume $q,q',q"=0$, without changing ht$(x,y,z,w,I_3(M))$. Hence  $ (x, y, z,w)^3+ (ax+by+cz+dw, ex+fy+gz+hw, kx+ly+mz+nw) \subset J$.  If ht$(I_3(N)) \geq 3$ where  $$\tiny N=\left(
\begin{array}{*{14}c}
 0& 0 & 0 & 0 & a & b & c & d & e & f & g & h \\
 -a& -b & -c & -d & 0 & 0 & 0 & 0 &  k & l &m &n \\
-e& -f& -g&-h&-k&-l&-m&-n& 0&0&0&0\\
\end{array}
\right),$$ then the ideal on the left is unmixed of multiplicity three and hence equal to $J$ by lemma \ref{lemma25}. 
  Suppose ht$(I_3((N)) \leq 2$. We have  $2 \leq$ ht$(I_3(M)) \leq $ ht$(I_2(M))$ mod $(x,y,z,w)$.  If ht$(I_2(M)) \geq 3$, then there exists at least three minors $\Delta_1,\Delta_2$ and $\Delta_3$ such that  $(\Delta_1,\Delta_2,\Delta_3)$ is of height three. We may assume $\Delta_1=(af-be)$. We get $a\Delta_1$, $b\Delta_1$, $e\Delta_1$ and $f\Delta_1\in I_3(N)$ and hence $\Delta_1^2 \in I_3(N)$, similarly for $\Delta_2,\Delta_3$. Hence, we obtain ht$(I_3(N))\geq 3$. So the only remaining case to study is whenever ht$(I_3(M))=$ ht$(I_2(M))=2$.
  By lemma \ref {lemmaI_3(M)=2}, $M$ takes four different forms. If $M$ is of the form \ref{lemmaI_3(M)=2}{\color{blue}-}\ref{abab} then $a(xz-y^2)=z(ax+by) -y(ay+bz) \in J$, and hence $J = (x,y,z,w)^3+(ax+by, ay+bz, xz-y^2, kx+ly+mz+nw)$ by lemma \ref{lemma23} which is case {\color{blue} II-}\ref{height}.  If $M$ is of the form \ref{lemmaI_3(M)=2}{\color{blue}-}\ref{abab1} then $b(zy-xw)=z(ax+by) -x(az+bw) \in J$, and hence $J = (x,y,z,w)^3+(ax+by, ay+bz, zy-xw, kx+ly+mz+nw)$ by lemma \ref{lemma23} which is case {\color{blue} II-}\ref{height}. If $M$ has the form of \ref{lemmaI_3(M)=2}{\color{blue}-}\ref{form1}, then by Remark \ref{4vars} and Lemmas \ref{lemma37}, \ref{lemma38}, \ref{lemma39}, we are in case {\color{blue} II-}\ref{vars}. If $M$ has the form of \ref{lemmaI_3(M)=2}{\color{blue}-}\ref{bg}, then we only study one case since all other cases are treated in the same way.  We study the case when $\alpha_2=\beta_1=\beta_2=0$ in the ideal $(x,y,z,w)^3+(ax+(\alpha_1f+\alpha_2g)y+(\beta_1f+\beta_2g)z, ex+fy+gz, kx+fz+gw)$. We get  $f(z^2a-ywa+ywe-yzk)= (we-kz)(ax+fy)-wa(ex+fy+gz)+az(kx+fz+gw)\in J$ and hence $J = (x,y,z,w)^3+(ax+fy, ex+fy+gz,kx+fz+gw, z^2a-ywa+ywe-yzk)$ by lemma \ref{lemma24}. Similarly, in the second form of the matrix we also study the case when $\alpha_2=\beta_1=\beta_2=0$ in the ideal $(x,y,z,w)^3+(ax+(\alpha_1f+\alpha_2g)y+(\beta_1f+\beta_2g)z, ex+fy+gz, kx+gz+fw)$. We get  $J=(x,y,z,w)^3+(ax+gy+fz,ex+fy+gz , kx+fz+gw, ya- wa - ye + yk)$ by lemma \ref{lemma24}. All other cases for $\alpha_i$ and $\beta_j$ are identical and we will omit the details. 
  These cases put us in {\color{blue} II-}\ref{height}.\\

  \underline{Case V. $J:P=(x,y,z,w)^2+(ax+by, ex+fy+nz, kx+ly+nw, (af-be)w-(al-kb)z)$.}\\
 We have $(x, y, z,w)^3+ (x,y,z,w)(ax+by, ex+fy+nz, kx+ly+nw, (af-be)w-(al-kb)z) \subset J$. Since the multiplicity of the ideal on the left is $6$, then there exit at least three additional genrators which we may assume to be of the following $g_1=ax+by+q_1, g_2=ex+fy+nz+q_2, g_3=  kx+ly+nw +q_3$ and $g_4= (af-be)w-(al-kb)z+q_4$ with $q_1, q_2, q_3 \in (z,w)^2$ and $q_4 \in (x,y)^2$ after relabeling the variables. The two possibilities for $H_{(R/J)_P}$ are either $(1,1,1)$ or $(1,2,0)$. In the first case, we have $q_i=0$ for all $i =1 \ldots 4$. It is also easy to show that if three of the $g_i's \in J$, then the fourth one is also in $J$. Hence, $(x,y,z,w)^3+(g_1,g_2,g_3,g_4) \subset J$. When ht$(I_5(N')) \geq 3$ mod $(x,y,z,w)$, with $N'$ the matrix defined in lemma \ref{lemma25}, the ideal on the left is unmixed and equal to $J$ by lemma \ref{lemma25}. This puts us in case {\color{blue} II-}\ref{height}.  Since $n^5, (a,b,e,f,k,l)((al-bk)^3, (af-be)^3, (el-fk)^3) \subset I_5(N')$, then ht$( I_5(N'))   \geq 3$ whenever ht$\left(I_2\begin{pmatrix} a&b\\e&f\\ k&l \end{pmatrix} \right)=2$. Hence, we need to study the case when ht$\left(I_2\begin{pmatrix} a&b\\e&f\\ k&l \end{pmatrix}\right)=1$. By \cite [Lemma 4.1] {HMMS13} we have three cases:\\
  Case 1. The matrix has the form: $\begin{pmatrix} a&b\\e&0\\ k&0 \end{pmatrix}$. If ht$(x,y,z,w,a,b,n,e,k) \geq 8$ then by lemma \ref{lemma26}, $J = (x, y, z,w)^3+ (ax+by, ex+nz, kx+nw, ew-kz)$ which puts us in case  {\color{blue} II-}\ref{height}. If ht$(x,y,z,w,a,b,n,e,k) =7$, we get back to case  {\color{blue} II-}\ref{vars}.\\
  Case 2. The matrix has the form: $\begin{pmatrix} a&b\\e&f\\ 0&0 \end{pmatrix}$. Hence by lemma \ref{lemma27},  $J =(x,y,z)^3+(w, ax+by, ex+fy+nz)$ whenever ht$(x,y,z,w,a,b,n,e,f) \geq 7$, which puts us again in cases {\color{blue} II-}\ref{height} or  {\color{blue} II-}\ref{vars}.\\
  Case 3. The matrix has the form: $\begin{pmatrix} a&b\\e&0\\ 0&e \end{pmatrix}$.  Since there are only three variables in the matrix then we are in case {\color{blue} II-}\ref{vars}. \\
  When $H_{(R/J)_P}=(1,2,0)$, $J$ contains a quadrics $g_i$ with $q_i \neq 0$. Since $J$ contains three quadrics, then $(x,y,z,w)g_l\in J$ where $g_l$ is the $4^{th}$ quadric. If ht$(I_5(N')) \geq 3$,  then $J=(x,y,z,w)^3 +(g_i, g_j, g_k)+ (x,y,z,w)g_l$ for $i \neq j \neq k \neq l$, by lemma \ref{lemma28} which puts us in case {\color{blue} II-}\ref{height}.
Hence, we study the case when  ht$\left(I_2\begin{pmatrix} a&b\\e&f\\ k&l \end{pmatrix}\right)=1$. As above, this puts us in {\color{blue} II-}\ref{height} or {\color{blue} II-}\ref{vars}.\\

  \underline{Case VI. $J:P = (x,y,z,w)^2+(ax+gy, ex+gz, ey-az, kx+ly+mz+nw) $}\\
  
  This case is similar to Case III.  Since we have  $(x, y, z,w)^3+ (x,y,z,w)(ax+gy, ex+gz, ey-az, kx+ly+mz+nw)\subset J$ and of multiplicity $6$, then there exist at least three additional quadrics. We may assume these quadrics to be three of the following $g_1= ax+gy +q_1, g_2=ex+gz+q_2, g_3= ey-az+q_3$ and $g_4=kx+ly+mz+nw +q_4$ with $q_1 \in (z,w)^2, q_2 \in (y,w)^2$, $q_3 \in(x,w)^2$ and $q_4=0$ after relabeling the variables. 
  When $H_{(R/J)_P}=(1,1,1)$, then $q_1=q_2=q_3=0$. Hence, by lemma \ref{lemma29}, $J =(x, y, z,w)^3+(ax+gy, ex+gz, ey-az, kx+ly+mz+nw)$ if ht$(x,y,z,w,a,g,e,k,l,m,n) \geq 8$, which puts us in case {\color{blue} II-}\ref{height}. If  ht$(x,y,z,w,a,g,e,k,l,m,n) \leq 7$, then we are back to case {\color{blue} II-}\ref{vars}.\\
  When $H_{(R/J)_P}=(1,2,0)$, there exists an $i$ such that $q_i \neq  0$. In that case, by lemma \ref{lemma30}, $J =(x,y,z,w)^3+(g_i, g_j, kx+ly+mz+nw)+(x,y,z,w)g_l$ for $i, j, l=1,\ldots 3$, which are cases {\color{blue} II-}\ref{height} or {\color{blue} II-}\ref{vars}.
  \end{proof}

\section {The projective dimension of $5$ quadric almost complete intersections}

In this section, we let  $I$ be an almost complete intersection generated by $5$ quadrics.  In \cite[Question 6.2]{HMMS131} and \cite[Question 10.2]{HMMS131}, the authors asked the following question:

\begin{question} Let $S$ be a polynomial ring and let $I$
be an ideal of $S$ generated by $n$ quadrics, having $ht( I) = h$. Is it true that
$pd(S/I) \leq h(n - h + 1)$?
\end{question}
We give an affirmative answer to their question for almost complete intersections generated by $5$ quadrics with small multiplicities. We show that, when $I$ has multiplicity $\leq 3$, then $\mbox{pd}(S/I) \leq 8$. When the multiplicity is less or equal than $2$, we make use of the inequality between the multiplicity and the Cohen-Macaulay defect, found in \cite[Theorem 2.5]{HT17}. If the multiplicity is three, then $I^{un}$ is one of the following types: $\langle 3;1 \rangle, \langle 1;3 \rangle, \langle 1,2;1,1 \rangle, \langle 1,1;1,2 \rangle$ and $ \langle 1,1,1;1,1,1\rangle$. We prove in every case, that pd$(S/I) \leq 8$.

\begin{notation}In this section, we use the following notation:
\begin{itemize}
\item $S$ is a polynomial ring over an algebraically closed field $k$,
\item $I = (q_1, q_2, q_3, q_4, q_5)$ where $q_i$ are polynomials of degree $2$ and height $(I)=4$
\item $L = (q_1, q_2, q_3, q_4):I=(q_1, q_2, q_3, q_4):I^{un}$
\end{itemize}
\end{notation}

\begin{corollary} Let $I$ be an almost complete intersection generated by $5$ quadrics with $e(S/I) \leq 2$, then pd$(S/I) \leq 6$. 
\end{corollary}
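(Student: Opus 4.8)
The plan is to translate the bound on $\projdim(S/I)$ into a bound on the Cohen--Macaulay defect and then feed in the inequality of \cite[Theorem 2.5]{HT17}. Put $n=\dim S$. As $S$ is a polynomial ring, $\operatorname{depth}S=n$, and since $\height(I)=4$ we have $\dim(S/I)=n-4$. The Auslander--Buchsbaum formula gives $\projdim(S/I)=n-\operatorname{depth}(S/I)$, whence
\[
\operatorname{cmd}(S/I)=\dim(S/I)-\operatorname{depth}(S/I)=(n-4)-\operatorname{depth}(S/I)=\projdim(S/I)-4.
\]
Thus $\projdim(S/I)\le 6$ is equivalent to $\operatorname{cmd}(S/I)\le 2$, and the entire statement reduces to controlling the Cohen--Macaulay defect.

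With this reduction in hand, the main input is the multiplicity/Cohen--Macaulay-defect inequality of \cite[Theorem 2.5]{HT17}. Since $e(S/I)=e(S/I^{un})\le 2$ is small, that inequality forces the Cohen--Macaulay defect to be small, bounding the defect of the relevant quotient by a small constant and hence, via the displayed identity, giving $\projdim(S/I)\le 6$. The extreme case $e(S/I)=1$ can be disposed of with no appeal to \cite{HT17} at all and serves as a sanity check: then $I^{un}$ is unmixed of multiplicity one, so by Theorem \ref{SN} it equals $(x,y,z,w)$ for four linearly independent linear forms; this ideal is extended from the polynomial subring $A=k[x,y,z,w]$ generated by the regular sequence $x,y,z,w$, so Corollary \ref{extend} already yields $\projdim(S/I)\le\max(4+2,4)=6$ directly.

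The delicate point is matching the hypotheses of \cite[Theorem 2.5]{HT17} to our ideal, since the inequality is cleanest for an unmixed quotient whereas $I$ itself may carry embedded components. I would therefore apply it to $S/I^{un}$ (legitimate because $e(S/I^{un})=e(S/I)\le 2$ by the associativity formula) and then transfer the conclusion to the almost complete intersection $I$ through the link $L=(q_1,q_2,q_3,q_4):I=(q_1,q_2,q_3,q_4):I^{un}$, noting that $q_1,\dots,q_4$ is a regular sequence of maximal length in $I$ so that $I^{un}$ and $L$ are linked via Theorem \ref{PS}. Lemma \ref{pdim-linkage} then contributes the single extra unit, $\projdim(S/I)\le\projdim(S/L)+1$, which is exactly what distinguishes the bound $6$ obtained here from the sharper value one reads off the unmixed part. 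I expect the only real work to lie in checking that \cite[Theorem 2.5]{HT17} applies to $S/I^{un}$ and that the resulting defect bound is inherited by $S/L$ (equivalently, that $\operatorname{cmd}(S/L)\le 1$, so that $\projdim(S/L)\le 5$); once this is in place the inequalities chain together to give $\projdim(S/I)\le 6$.
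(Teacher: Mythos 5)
Your opening reduction is correct and is exactly the arithmetic the paper uses implicitly: by Auslander--Buchsbaum and $\dim(S/I)=\dim S-4$, one has $\projdim(S/I)-4=\dim(S/I)-\operatorname{depth}(S/I)$, so the corollary is equivalent to bounding the Cohen--Macaulay defect of $S/I$ by $2$. The $e=1$ digression via Theorem \ref{SN} and Corollary \ref{extend} is also fine, though unnecessary. The problem is your main case. The paper's proof is a single application of \cite[Theorem 2.5]{HT17} \emph{directly to $S/I$}: that result is an inequality $e(S/I)\geq \dim(S/I)-\operatorname{depth}(S/I)$ for almost complete intersections, and $I$ itself is the almost complete intersection here. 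Your premise that the inequality ``is cleanest for an unmixed quotient whereas $I$ itself may carry embedded components'' is backwards: the whole point of the Hassanzadeh--Tavanfar theorem, and of its use in this paper, is that it controls the depth of an almost complete intersection, embedded components and all. By contrast, $I^{un}$ is typically \emph{not} an almost complete intersection (the unmixed multiplicity-$2$ ideals classified in Proposition \ref{multiplicity2} need far more than $5$ generators), so it is your application of \cite[Theorem 2.5]{HT17} to $S/I^{un}$ that is of dubious legitimacy, not an application to $S/I$.

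This detour then creates a genuine gap at the transfer step. Even granting $\operatorname{cmd}(S/I^{un})\leq 2$, i.e.\ $\projdim(S/I^{un})\leq 6$, you need $\projdim(S/L)\leq 5$ for the linked ideal $L$, and you give no argument for this; it does not follow from the linkage results available in the paper. Theorem \ref{PS} transfers Cohen--Macaulayness across a link but says nothing about defect $1$ versus defect $2$; Lemma \ref{linked-ideals=pdim} says all ideals linked to $I^{un}$ have a common projective dimension, but it does not compare $\projdim(S/L)$ with $\projdim(S/I^{un})$ --- these can genuinely differ when $S/I^{un}$ is not Cohen--Macaulay, since $L/(q_1,\dots,q_4)$ is the canonical module of $S/I^{un}$, whose depth is not determined by that of $S/I^{un}$. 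So the step you flag as ``the only real work'' is precisely the missing proof, and the inequality you need there ($\operatorname{cmd}(S/L)\leq 1$) is strictly stronger than the one you derived ($\operatorname{cmd}(S/I^{un})\leq 2$). The fix is to delete the detour entirely: apply \cite[Theorem 2.5]{HT17} to $S/I$ and conclude $\projdim(S/I)\leq e(S/I)+4\leq 6$ in one line, as the paper does; no linkage and no passage to $I^{un}$ is needed.
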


\begin{proof} If $e(S/I) \leq 2$, then by \cite[Theorem 2.5]{HT17}, $e(S/I) \geq dim (S/I) - depth (S/I)$. Hence $2 \geq e(S/I) \geq \mbox{pd}(S/I) -4$, which implies the assertion.
\end{proof}

Before we move to multiplicity three, we need the following lemmas.
\begin{lemma} \label{linearform} If $I^{un}$ or $L = (q_1, q_2, q_3, q_4): I$ contains two linear forms then pd$(S/I) \leq 6$
\end{lemma}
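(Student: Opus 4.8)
The plan is to kill the two linear forms, reduce the projective dimension question to a height--two problem in a smaller polynomial ring, and then dispatch the two alternatives ($I^{un}$ versus $L$) according to their multiplicities. Throughout I would lean on the linkage inequality $\mathrm{pd}(S/I)\le \mathrm{pd}(S/L)+1$ of Lemma \ref{pdim-linkage}, so that it suffices to control $\mathrm{pd}(S/L)$ (and, in the $I^{un}$ alternative, $\mathrm{pd}(S/I^{un})$ together with Proposition \ref{coker} or Corollary \ref{extend}).

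The common mechanism is a dimension reduction. Let $K$ be whichever of $I^{un}$, $L$ contains two linearly independent linear forms, and after a linear change of variables take these to be $x,y$. Since $K$ has height $4$ and $(x,y)\subseteq K$, its image $\bar K$ in $\bar S:=S/(x,y)$ is an unmixed ideal of height $2$ with $\bar S/\bar K\cong S/K$ as rings. Because $x,y$ is a regular sequence and $S/K$ is annihilated by $(x,y)$, the change--of--rings identity gives $\mathrm{pd}_S(S/K)=\mathrm{pd}_{\bar S}(\bar S/\bar K)+2$. Thus the whole problem reduces to showing $\mathrm{pd}_{\bar S}(\bar S/\bar K)\le 3$.

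When $K=I^{un}$ this is the tractable case: $\bar{I^{un}}$ is unmixed of height $2$ and $e(\bar S/\bar K)=e(S/I^{un})\le 3$. The classification of $(x,y,z,w)$--primary ideals of multiplicity at most three (Propositions \ref{multiplicity2} and \ref{mult3}), together with Samuel--Nagata (Theorem \ref{SN}), forces each component, after the reduction, to be defined by forms in at most four further variables; hence $I^{un}$ is extended from a regular sequence of at most $6$ linear forms and Corollary \ref{extend} yields $\mathrm{pd}(S/I)\le\max(6,6)=6$ at once. Equivalently, $\mathrm{pd}_{\bar S}(\bar S/\bar K)\le 3$ amounts to the Cohen--Macaulay defect of $S/I^{un}$ being at most $1$, which is compatible with the multiplicity bound $e\ge \dim-\mathrm{depth}$ of \cite[Theorem 2.5]{HT17}. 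I would also note that once $\mathrm{pd}(S/I^{un})\le 5$ is in hand, feeding it into Proposition \ref{coker} recovers $\mathrm{pd}(S/I)\le 6$.

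The case $K=L$ is where I expect the real difficulty. By Theorem \ref{PS}(3), $e(S/L)=e\big(S/(q_1,\dots,q_4)\big)-e(S/I^{un})=16-e(S/I^{un})\ge 13$, so the multiplicity of $\bar L$ is large and the argument of the previous paragraph is unavailable: the small--multiplicity/few--variables route simply does not apply to $L$. Instead I would exploit that $L$ is linked to $I^{un}$ via $q_1,\dots,q_4$. Linkage duality identifies the lower local cohomology of $S/L$ with that of $S/I^{un}$ (up to Matlis duality and reflection of cohomological degree), so the rigidity imposed by the presence of $x,y$ in $L$, which constrains $\bar S/\bar L=S/L$, can be transported back to the well--understood ideal $I^{un}$. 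Concretely I would aim to show that either $I^{un}$ is itself extended from few variables (returning to the previous case) or $S/L$ is Cohen--Macaulay, in which case $\mathrm{pd}_{\bar S}(\bar S/\bar L)=2$ by Theorem \ref{PS}(2) and $\mathrm{pd}(S/I)\le \mathrm{pd}(S/L)+1\le 4+1=5$. Balancing the large multiplicity of $L$ against the constraints coming from its two linear forms, through the linkage with the low--multiplicity $I^{un}$, is the step I expect to require the most care.
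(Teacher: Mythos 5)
Your proposal has genuine gaps in both branches, and it misses the single observation that makes the paper's proof work. The paper does not kill $x,y$ and classify what is left; it uses the two linear forms \emph{together with two of the five quadrics}. Since $(I,x,y)\subseteq I^{un}$ (resp.\ $(q_1,\dots,q_4,x,y)\subseteq L$) has height exactly $4$, one can choose $q_1,q_2$ among the quadrics so that $x,y,q_1,q_2$ is a regular sequence inside the ideal in question, and then link with respect to this \emph{mixed} complete intersection, whose multiplicity is $1\cdot 1\cdot 2\cdot 2=4$. The resulting link $L'=(x,y,q_1,q_2):I^{un}$ is unmixed of height $4$ with $e(S/L')\le 4$, and a four-case analysis on $e(S/L')\in\{1,2,3,4\}$ (Theorem \ref{SN}, Lemma \ref{equalideals}, Theorem \ref{PS}, and Engheta's classification of height-two multiplicity-two unmixed ideals for the case $e(S/L')=2$) bounds $\mathrm{pd}(S/L')\le 5$; Lemma \ref{pdim-linkage} then gives $\mathrm{pd}(S/I)\le 6$. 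Note this needs no hypothesis on $e(S/I)$, whereas your argument silently imports $e(S/I^{un})\le 3$, which is not in the statement. This same observation dissolves the ``real difficulty'' you anticipate in the $L$ branch: you forgot that $q_1,\dots,q_4\in L$ by definition of a colon ideal, so if $x,y\in L$ then $L$ contains a regular sequence $x,y,q',q''$ and, being unmixed of height $4$, satisfies $e(S/L)\le 4$. Combined with your own computation $e(S/L)=16-e(S/I^{un})$, the case is simply vacuous when $e(S/I^{un})\le 3$, and in general the identical four-case analysis applies to $L$ directly; no local cohomology or linkage duality is needed, and your sketch there is in any event not a proof.

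The $I^{un}$ branch of your argument also fails on its own terms. Propositions \ref{multiplicity2} and \ref{mult3} classify ideals \emph{primary to the linear prime} $(x,y,z,w)$; the components of $I^{un}$ need not be of this kind. For instance $I^{un}=(x,y,z,w)\cap(x,y,u,q)$ with $q$ an irreducible quadric of large rank (allowed by Corollary \ref{prime}) contains two linear forms and has multiplicity $2$, yet is not ``defined by forms in at most four further variables'' and is not extended from six variables, so Corollary \ref{extend} does not apply. Moreover, even if you did establish $\mathrm{pd}(S/I^{un})\le 5$, that alone does not bound $\mathrm{pd}(S/I)$: Lemma \ref{pdim-linkage} requires the projective dimension of an ideal \emph{linked to} $I^{un}$, not of $I^{un}$ itself, and Proposition \ref{coker} requires bounding $\mathrm{pd}(\mathrm{Coker}(\partial_5^*))$, which is $\mathrm{Ext}^5_S(S/I^{un},S)$ and is not controlled by $\mathrm{pd}(S/I^{un})$ in any obvious way; your parenthetical claim that it ``recovers $\mathrm{pd}(S/I)\le 6$'' is unjustified. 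Finally, the appeal to \cite[Theorem 2.5]{HT17} is only ``compatibility,'' not an argument, and the paper invokes that inequality only in the regime $e(S/I)\le 2$.
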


\begin{proof} The proof goes along the same line as the proof of Lemma 3.4 in \cite{HMMS13}. We suppose $x$ and $y$ are two linear forms contained in $I^{un}$. Since  $I \subset (I, x,y) \subset I^{un}$, then ht$(I, x) = 4$. 
Hence, after relabeling the quadrics,  we may assume $x, y, q_1, q_2$ form a regular sequence. Let $L' = (x, y, q_1, q_2) : I = (x, y, q_1, q_2) : I^{un}$.
Since $L'$ contains a complete intersection of two linear forms and two quadrics then $e(S/L') \leq 4$. If $e(S/L')=4$ then $L '= (x,y,q_1,q_2)$ and $L'$ is CM, hence pd$(S/I) \leq 5$ by Lemma \ref{pdim-linkage}. If $e(S/L')=1$ then $L$ is CM by theorem \ref{SN} and pd$(S/I) \leq 5$ again by Lemma \ref{pdim-linkage}.
If $e(S/L')=2$, then write $L'=(x,y)+ L''$, where $x$ and $y$ are regular on $L''$, $L''$ unmixed of multiplicity $e(S/L'')=2$ and height ht$(L'')=2$.  By \cite[Proposition 11]{E07}, pd$(S/L'') \leq 3$ which yields to pd$(S/L') \leq 5$ and hence pd$(S/I) \leq 6$. Finally if $e(S/L')=3$ consider $J = (x, y, q_1, q_2): L'$, by Theorem \ref{PS},  $e(S/J)= e(S/(x,y,q_1,q_2))- e(S/L')=4-3=1$. So $J$ is CM by Theorem \ref{SN}. Hence $L'$ is CM and pd$(S/L')= 3$, which implies that pd$(S/I) \leq 4$. 
\end{proof}

\begin{lemma}\label{KJ} Suppose $I \subset K \cap J$ where $K = K'+(q)$, ht$(K)=$ ht$(J)=4$, ht$(K')=3$ and $q$ a quadric. Then there exists a quadric $q' \in K \cap J$ such that $K=K'+(q')$.
\end{lemma}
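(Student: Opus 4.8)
The plan is to use two facts: that $I$ is generated by quadrics and that the height drops strictly from $K$ to $K'$. First I would record the degree-two graded piece of $K$. Since $K=K'+(q)$ is homogeneous and $q$ is a quadric, the ideal $(q)=qS$ contributes $q\,S_0=k\,q$ in degree two, so
$$K_2=(K')_2+k\,q,$$
the sum of the degree-two component of $K'$ and the one-dimensional space spanned by $q$. Each generator $q_i$ of $I=(q_1,\dots,q_5)$ lies in $K$ (because $I\subseteq K\cap J\subseteq K$) and is homogeneous of degree two, hence lies in $K_2$. I can therefore write $q_i=k_i+c_i\,q$ with $k_i\in(K')_2\subseteq K'$ and $c_i\in k$ a \emph{scalar}. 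The point that makes $c_i$ a constant rather than a form is precisely that $q_i$ and $q$ share the same degree.

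Next I would show that the $c_i$ cannot all vanish. If $c_i=0$ for every $i$, then every $q_i$ lies in $K'$, so $I=(q_1,\dots,q_5)\subseteq K'$; but $I\subseteq K'$ forces $\height(I)\le\height(K')=3$, contradicting $\height(I)=4$. Hence some $c_i\neq0$. Fixing such an index, I solve $q=c_i^{-1}(q_i-k_i)\in K'+(q_i)$, and since $q_i\in K$ as well this gives $K=K'+(q)=K'+(q_i)$. Setting $q'=q_i$, the form $q'$ is a quadric, lies in $I\subseteq K\cap J$, and satisfies $K=K'+(q')$, which is exactly the assertion.

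The argument is short, and I do not expect a genuine obstacle. The only step requiring care is the extraction of the scalar coefficient $c_i$: it relies on matching the degree of the generator $q_i$ with the degree of $q$, and it would break for a generator of higher degree, where one could only recover $f\,q$ for some form $f$ rather than $q$ itself, so that the conclusion $K=K'+(q')$ would not follow. The complementary ingredient is the elementary height comparison $I\subseteq K'\Rightarrow\height(I)\le\height(K')$, and this is where the hypothesis $\height(K')=3<4=\height(I)$ does its work.
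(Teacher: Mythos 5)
Your proof is correct and follows essentially the same route as the paper's: decompose each generator $q_i$ of $I$ as an element of $K'$ plus a scalar multiple of $q$, observe that not all scalars can vanish since $I\subseteq K'$ would contradict $\mathrm{ht}(K')=3<4=\mathrm{ht}(I)$, and take $q'=q_i$ for an index with nonzero scalar. Your write-up is simply more explicit than the paper's about why the coefficient is a scalar (degree matching in the graded piece $K_2$) and why $K=K'+(q_i)$ follows.
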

\begin{proof}  For each $i= 1 \cdots 5$, write $q_i= f_i+\alpha_i q$ where $f_i \in K'$ and $\alpha_i \in k$. If $\alpha_i=0$ for all $i$, one has $I \subset K'$ which implies ht$(K') \geq 4$ and gives a contradiction. Hence, one may assume $\alpha_i \neq 0$ for some $i$. Take $q'=q_i$.
\end{proof}

In the next theorems, we suppose $e(S/I)=3.$

\begin{theorem} If $I^{un}$ is of type $\langle 3;1 \rangle$, then pd$(S/I) \leq 8$. 
\end{theorem}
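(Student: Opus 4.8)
The plan is to work with the link $L=(q_1,q_2,q_3,q_4):I$ and reduce the projective dimension of $S/I$ to that of $S/L$, then bound the latter by analyzing the structure of the unmixed ideal $I^{un}$. Since $I^{un}$ is of type $\langle 3;1\rangle$, it is $\mathfrak{p}$-primary for a single height-four prime $\mathfrak{p}$ with $e(S/\mathfrak{p})=3$ and $\lambda(S_{\mathfrak{p}}/I^{un}_{\mathfrak{p}})=1$; in particular $e(S/I^{un})=3$. First I would invoke Proposition \ref{mult-height}: a non-degenerate height-four prime has $e(S/\mathfrak{p})\geq 5$, so $e(S/\mathfrak{p})=3$ forces $\mathfrak{p}$ to be degenerate. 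Peeling off the linear forms in $\mathfrak{p}$, I expect $\mathfrak{p}$ to be either $(x,y,z,w)$ (the linear prime, with $\lambda=3$ being the relevant multiplicity contribution, but here $\lambda=1$ so this would give $e=1$ — ruling it out) or a prime $(x,y,z,q)$ with a genuine quadric. By Corollary \ref{prime} the latter occurs only when $e(S/\mathfrak{p})=2$; so the only way to reach $e(S/\mathfrak{p})=3$ with $\lambda=1$ is through a degenerate prime containing three linear forms and defining a plane cubic curve structure, i.e.\ $\mathfrak{p}=(x,y,z,c)$ where $c$ is an irreducible cubic in the remaining variables.

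Having pinned down $\mathfrak{p}$, the next step is to compute the link. Since $I$ is an almost complete intersection (five quadrics, height four), $(q_1,q_2,q_3,q_4)$ is a regular sequence of quadrics, so $e(S/(q_1,q_2,q_3,q_4))=16$ by B\'ezout. By Theorem \ref{PS}(iii), $e(S/L)=16-e(S/I^{un})=16-3=13$. The key leverage is Lemma \ref{pdim-linkage}: since $L$ is linked to $I^{un}$, one has $\mathrm{pd}(S/I)\leq \mathrm{pd}(S/L)+1$, so it suffices to show $\mathrm{pd}(S/L)\leq 7$. I would then aim to show that $L$ is Cohen--Macaulay, or nearly so: by Theorem \ref{PS}(ii), $S/I^{un}$ is CM iff $S/L$ is CM, and an $I^{un}$ of type $\langle 3;1\rangle$ sitting inside a degenerate prime of the form $(x,y,z,c)$ should be Cohen--Macaulay (it is essentially a codimension-four ideal whose quotient is a plane cubic cone), giving $\mathrm{pd}(S/I^{un})=4$ and hence $\mathrm{pd}(S/L)=4$. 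That would yield the far stronger bound $\mathrm{pd}(S/I)\leq 5$, so I would double-check whether the type $\langle 3;1\rangle$ really forces CM-ness, since the $8$ in the statement suggests the non-CM case must be handled.

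The main obstacle I anticipate is that $I^{un}$ of type $\langle 3;1\rangle$ need not be Cohen--Macaulay: the multiplicity-three structure on a degenerate prime can fail depth, and the classification of such primary ideals (Proposition \ref{mult3}) shows several families, some of whose quadric parts are only expressible in eight variables (cases \textbf{I-}\ref{varss} and \textbf{II-}\ref{vars}). The real work is therefore to run through the possible forms of $I^{un}$ given by Proposition \ref{mult3} under the type constraint $\langle 3;1\rangle$ and, in each, bound $\mathrm{pd}(S/L)$ directly. Here I would lean on Corollary \ref{extend} and Lemma \ref{extend1}: whenever the generators of $L$ (or of $I^{un}$, via Proposition \ref{coker}) lie in a subalgebra $k[f_1,\dots,f_t]$ generated by a regular sequence of length $t\leq 8$, we get $\mathrm{pd}(S/I)\leq\max(h+2,t)=\max(6,8)=8$.

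Concretely, the cleanest route is: show $I^{un}$ is degenerate, write $I^{un}=(x,y,z)+J'$ where $J'$ is a $(c)$-type structure in at most the remaining variables plus linear forms, verify that all generators are supported on at most eight variables, and conclude via Corollary \ref{extend} that $\mathrm{pd}(S/I)\leq 8$. The delicate point — and where I expect to spend most of the argument — is confirming that the three linear forms in $\mathfrak{p}$ together with the non-linear generators of $I^{un}$ never require more than eight variables, and handling the borderline case where $\mathfrak{p}=(x,y,z,c)$ with $c$ a cubic so that $I^{un}$ genuinely has the cubic built in; in that situation I would apply Proposition \ref{coker} to the dual $\mathrm{Coker}(\partial^*_{5})$ of the minimal free resolution of $I^{un}$ and bound its projective dimension by $8$, completing the proof.
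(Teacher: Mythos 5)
Your opening step coincides with the paper's: since $\lambda=1$ forces $I^{un}$ to equal its unique associated prime $\mathfrak{p}$, and $e(S/\mathfrak{p})=3<5$, Proposition \ref{mult-height} makes $\mathfrak{p}$ degenerate. But your classification of $\mathfrak{p}$ then goes wrong, and the rest of your argument relies on it. Peeling off linear forms only continues as long as the bound $e\geq \mathrm{ht}+1$ forces degeneracy: after extracting \emph{two} linear forms you are left with a height-two prime of multiplicity $3$, and at height two that bound no longer rules out non-degeneracy. Indeed non-degenerate height-two primes of multiplicity $3$ exist, namely the cone over the twisted cubic, i.e.\ the ideal of $2\times 2$ minors of a $1$-generic $2\times 3$ matrix of linear forms. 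So $\mathfrak{p}$ can be $(x,y)+\mathfrak{q}$ with $\mathfrak{q}$ such a prime, containing only two independent linear forms; your claim that the only possibility is $\mathfrak{p}=(x,y,z,c)$ with $c$ an irreducible cubic is false, and neither of your two closing mechanisms (complete-intersection/CM-ness of $(x,y,z,c)$, or eight-variable support of the generators) is ever established for this missed case. Coincidentally the case is harmless --- such scroll-type primes are Cohen--Macaulay and their quadrics involve at most $6$ variables, so either linkage or Corollary \ref{extend} would finish --- but your proof as written never sees it, so there is a genuine hole exactly there. (Note also that in the case you do treat, $(x,y,z,c)$, the cubic $c$ need not involve boundedly many variables, so the ``supported on at most eight variables'' route fails for it; it is saved only by being a complete intersection.)

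A second, related confusion: you propose to run through ``the possible forms of $I^{un}$ given by Proposition \ref{mult3} under the type constraint $\langle 3;1\rangle$.'' Proposition \ref{mult3} classifies $(x,y,z,w)$-primary ideals of multiplicity $3$, i.e.\ type $\langle 1;3\rangle$; it says nothing about type $\langle 3;1\rangle$, where $I^{un}$ is itself a prime of multiplicity $3$. (The $\langle 1;3\rangle$ situation is the separate Theorem \ref{primary}, and that is where the eight-variable families and the bound $8$ genuinely matter; your worry that ``the $8$ in the statement suggests the non-CM case must be handled'' stems from conflating the two types.) The paper's own proof needs no structure theory beyond degeneracy: write $I^{un}=(x)+I'$ with $I'$ a height-three unmixed ideal of type $\langle 3;1\rangle$; by \cite[Theorem 2.10]{HMMS13} $I'$ contains another linear form; hence $I^{un}$ contains two linear forms and Lemma \ref{linearform} gives $\mathrm{pd}(S/I)\leq 6$. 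That argument also disposes of the case you missed, since $(x,y)+\mathfrak{q}$ still contains two linear forms; replacing your false classification by ``$\mathfrak{p}$ contains two linear forms'' and invoking Lemma \ref{linearform} is the shortest repair of your proposal.
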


\begin{proof} If $I^{un}$ is of type $\langle 3;1 \rangle$, then it contains a linear form say $x$ by Proposition \ref{mult-height}. We write $I^{un}=(x)+I'$, where $I'$ is unmixed of type $\langle 3;1 \rangle$ and height three. By \cite[Theorem 2.10]{HMMS13}, $I'$ contains another linear form. Hence pd$(S/I) \leq 5$ by Lemma \ref{linearform}.
\end{proof}

\begin{theorem}\label{primary} If $I^{un}$ is of type $\langle 1;3 \rangle$, then pd$(S/I) \leq 8$.   \end{theorem}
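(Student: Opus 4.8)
The plan is to identify $I^{un}$ with one of the ideals classified in Proposition~\ref{mult3} and then bound $\operatorname{pd}(S/I)$ case by case using the three mechanisms collected in Section~2. First I would pin down the prime: since $I^{un}$ is of type $\langle 1;3\rangle$ it is primary to a single height-four prime $\mathfrak p$ with $e(S/\mathfrak p)=1$, so by Theorem~\ref{SN} $\mathfrak p$ is generated by four linear forms, and after a change of coordinates $\mathfrak p=(x,y,z,w)$ with $I^{un}$ being $(x,y,z,w)$-primary of multiplicity three. Hence $I^{un}$ is exactly one of the ideals of Proposition~\ref{mult3}. It is worth noting at the outset that the shortcut used for $e\le2$ is unavailable here: for $e\ge3$ there exist $\mathfrak p$-primary ideals of arbitrarily large projective dimension \cite{HMMS16}, so the inequality of \cite[Theorem~2.5]{HT17} cannot control the Cohen--Macaulay defect, and the finiteness must instead be extracted from the fact that $I$ is generated by only five quadrics, i.e.\ from the classification.

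Next I would clear away the cases governed by a general principle. Whenever $I^{un}$ contains two linear forms (the degenerate cases {\color{blue}I-}\ref{3lf}--{\color{blue}I-}\ref{3lf1}), Lemma~\ref{linearform} gives $\operatorname{pd}(S/I)\le6$. Whenever all minimal generators of $I^{un}$ can be written in at most eight variables, Corollary~\ref{extend} gives $\operatorname{pd}(S/I)\le\max(h+2,8)=8$; this covers the declared eight-variable cases {\color{blue}I-}\ref{varss} and {\color{blue}II-}\ref{vars}, and I claim it also covers every remaining degenerate case {\color{blue}I-}\ref{lf1}--{\color{blue}I-}\ref{lf7}. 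Indeed such an $I^{un}$ is generated by its listed quadrics together with monomials in $x,y,z,w$, the coefficient forms that appear span at most four new linear forms by the stated height conditions, and eight independent linear forms are a regular sequence; so $I^{un}$ is extended from a polynomial subring on at most eight variables.

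The genuine work lies in the non-degenerate cases beginning with {\color{blue}II-}\ref{Case1a}, together with the height-at-most-three cases {\color{blue}I-}\ref{height2} and {\color{blue}II-}\ref{height}: here the coefficient forms can span as many as eight new variables, so $I^{un}$ may involve up to twelve variables and neither shortcut applies. For these I would proceed through Proposition~\ref{coker}: write down the explicit minimal free resolution $F_\bullet$ of $I^{un}$---each such ideal is a power or mixed power of $(x,y,z,w)$ (for instance $(x,y,z,w)^2$ or $(x,y)^2+(x,y)(z,w)$) enlarged by two or three prescribed forms, so its resolution is accessible---locate the differential $\partial_{5}=\partial_{h+1}$, and bound $\operatorname{pd}(\operatorname{Coker}(\partial^{*}_{5}))\le8$, whence $\operatorname{pd}(S/I)\le\max\{6,8\}=8$. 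Where an $I^{un}$ admits a decomposition $I^{un}=I_1\cap I_2$ into understood pieces I would instead apply Lemma~\ref{intersection}, and where the link $L=(q_1,q_2,q_3,q_4):I$ happens to contain two linear forms I would fall back on Lemma~\ref{linearform}; Lemma~\ref{KJ} is the tool that normalizes the quadric in such intersection decompositions. In the sub-cases where $S/I^{un}$ turns out to be Cohen--Macaulay one obtains the stronger $\operatorname{pd}(S/I)\le5$ directly from Theorem~\ref{PS}(2) and Lemma~\ref{pdim-linkage}, so only the non-Cohen--Macaulay forms require the cokernel computation.

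The main obstacle is precisely this last family. Since a $(x,y,z,w)$-primary ideal of multiplicity three can have large projective dimension, the only leverage is the explicit shape dictated by Proposition~\ref{mult3}, and the target $8=h(n-h+1)$ is tight, leaving no slack: one must verify $\operatorname{pd}(\operatorname{Coker}(\partial^{*}_{5}))\le8$ for each non-degenerate form, and the real difficulty is doing this uniformly rather than drowning in the individual resolutions. I expect the height conditions built into each case of Proposition~\ref{mult3} (for instance $\operatorname{ht}(I_2(M))\ge2$ in {\color{blue}II-}\ref{Case1a}) to be exactly what keeps the tail of $F_\bullet$ short enough for the estimate to go through.
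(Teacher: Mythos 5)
Your plan coincides with the paper's proof on most of the case analysis --- Theorem \ref{SN} to reduce to $\mathfrak p=(x,y,z,w)$, Lemma \ref{linearform} for the cases of Proposition \ref{mult3} containing two linear forms, and Corollary \ref{extend} for the cases (including I-\ref{lf1} through I-\ref{lf7}, I-\ref{varss} and II-\ref{vars}) where the quadrics of $I^{un}$ involve at most $8$ linear forms --- but it has one genuine gap: your treatment of the two ``height at most three'' cases I-\ref{height2} and II-\ref{height}. You group these with the non-degenerate cases as requiring ``genuine work'' via explicit resolutions and Proposition \ref{coker}. That step cannot be carried out: in those two cases Proposition \ref{mult3} supplies no generators at all (they are catch-all cases asserting only that the quadrics of $J$ generate an ideal of height at most three), so there is no resolution to write down and nothing for Proposition \ref{coker} to act on. The observation you are missing is that these cases are vacuous here: $I$ is generated by five quadrics and $I \subset I^{un}$, so the ideal generated by the quadrics of $I^{un}$ contains $I$ and hence has height at least ht$(I)=4$, contradicting the height-three condition. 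The paper disposes of these cases in exactly this one line; without it, your case analysis routes two (impossible) cases through an argument that cannot be executed.

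A secondary divergence concerns the non-degenerate cases II-\ref{Case1a} through II-\ref{z^2,e=a}. Your mechanism there --- bound pd$(\mbox{Coker}(\partial_5^*))$ from an explicit resolution of $I^{un}$ and invoke Proposition \ref{coker} --- is only the paper's fallback. The paper's primary tool is linkage: Lemmas \ref{lemma5}, \ref{lemma10}, \ref{lemma11}, \ref{lemma16}, \ref{lemma17}, \ref{lemma19} and \ref{lemma20} of the Appendix exhibit, for each such form of $I^{un}$, an explicit ideal $L$ directly linked to $I^{un}$ with pd$(S/L)\leq 6$ (usually $5$), so Lemma \ref{pdim-linkage} gives pd$(S/I)\leq 7$; only two cases (Lemmas \ref{lemma18} and \ref{lemma21}, i.e.\ II-\ref{g=h=a} and II-\ref{z^2,e=a}) are handled through Proposition \ref{coker}, with pd$(\mbox{Coker}(\partial_5^*))\leq 6$ verified by explicit (Macaulay2-checked) computation. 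Your route is viable in principle, but the inequality pd$(\mbox{Coker}(\partial_5^*))\leq 8$ is the entire analytic content of these cases and cannot be left as an expectation that the height hypotheses ``keep the tail short''; it is precisely what the Appendix computations supply, and stating it without verification leaves the hard part of the theorem unproved.
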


\begin{proof} If $I^{un}$ is of type $\langle 1;3 \rangle$, then $I^{un}$ is one of proposition \ref{mult3}. When $I^{un}$ contains two linear forms then 
by lemma \ref{linearform}, pd$(S/I) \leq 6$.  This cover cases I-$(i)$ to $(iv)$. In cases I-$(v)$ to $(xi)$, I-$(xiii)$ and II-$(xii)$, all quadrics in $I^{un}$ are expressible in terms of at most $8$ variables. Hence by lemma \ref{extend},
 pd$(S/I) \leq 8$. When the quadrics of $I^{un}$ generate an ideal of height at most three, we get a contradiction since ht$(I)=4$. In the cases II-$(i)$ to $(x)$, we note by Lemmas \ref{lemma5}, \ref{lemma10}, \ref{lemma11}, \ref{lemma16}, \ref{lemma17}, \ref{lemma19} and \ref{lemma20}, that any ideal $L$ directly linked to $I^{un}$ satisfies pd$(S/L) \leq 6$ . Also by Lemmas \ref{lemma18} and \ref{lemma21}, we get pd(Corker$(\partial_5^*)) \leq 6$ where $\partial_i$ is the $i^{th}$ differential map in the resolution of $I^{un}$. Hence pd$(S/I) \leq 7$ by Lemma \ref{pdim-linkage} and Proposition \ref{coker}.
\end{proof}

\begin{theorem}If $I^{un}$ is of type $\langle 1,2;1,1 \rangle$, then pd$(S/I) \leq 6$.
\end{theorem}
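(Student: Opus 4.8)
The plan is to split $I^{un}$ into its two prime components, observe that their sum is generated by linear forms, and feed the resulting control on the minimal free resolution of $S/I^{un}$ into Proposition~\ref{coker}.

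First I would describe $I^{un}$. Being of type $\langle 1,2;1,1\rangle$, the ideal $I^{un}$ has exactly two minimal primes $\mathfrak{p}_1,\mathfrak{p}_2$ of height four, with $e(S/\mathfrak{p}_1)=1$, $e(S/\mathfrak{p}_2)=2$ and $\lambda_1=\lambda_2=1$; the last condition forces the two primary components to be the primes themselves, so $I^{un}=\mathfrak{p}_1\cap\mathfrak{p}_2$. By Theorem~\ref{SN}, $\mathfrak{p}_1=(\ell_1,\ell_2,\ell_3,\ell_4)$ is generated by four linear forms, while Proposition~\ref{mult-height} (which would force $e\ge 5$ were $\mathfrak{p}_2$ non-degenerate) together with Corollary~\ref{prime} gives $\mathfrak{p}_2=(x,y,z,q)$ for linear forms $x,y,z$ and a quadric $q$. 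In particular $S/\mathfrak{p}_1$ and $S/\mathfrak{p}_2$ are complete intersections, hence Cohen--Macaulay of codimension four with $\projdim(S/\mathfrak{p}_i)=4$.

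The key observation is that $\mathfrak{p}_1+\mathfrak{p}_2$ is generated by linear forms. Since $\height(I)=4>3=\height(x,y,z)$, not every generator of $I$ lies in $(x,y,z)$; choosing a quadric generator $q_i\notin(x,y,z)$ and noting $q_i\in I\subseteq\mathfrak{p}_1$, we may take $q=q_i$, so that the quadric defining $\mathfrak{p}_2$ already lies in $\mathfrak{p}_1$. Then $\mathfrak{p}_1+\mathfrak{p}_2=\mathfrak{p}_1+(x,y,z)$ is generated by the $7-d$ independent linear forms among $\ell_1,\dots,\ell_4,x,y,z$, where $d=\dim_k\bigl(\langle\ell_1,\dots,\ell_4\rangle\cap\langle x,y,z\rangle\bigr)$; irreducibility of $q$ modulo $(x,y,z)$ rules out $d=3$ (otherwise the image of $q$ would be a multiple of a single linear form, hence reducible), so $d\le 2$. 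Thus $S/(\mathfrak{p}_1+\mathfrak{p}_2)$ is a polynomial ring, in particular Gorenstein and Cohen--Macaulay of codimension $7-d$. Feeding $\projdim(S/\mathfrak{p}_i)=4$ and $\projdim(S/(\mathfrak{p}_1+\mathfrak{p}_2))=7-d$ into Lemma~\ref{intersection} gives $\projdim(S/I^{un})\le\max\{4,6-d\}\le 6$.

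Finally I would run Proposition~\ref{coker}, for which it remains to bound $\projdim(\operatorname{Coker}(\partial_5^*))$, where $\partial_\bullet$ are the differentials of the minimal free resolution of $S/I^{un}$. Here I would use the long exact $\operatorname{Ext}$ sequence attached to $0\to S/I^{un}\to S/\mathfrak{p}_1\oplus S/\mathfrak{p}_2\to S/(\mathfrak{p}_1+\mathfrak{p}_2)\to 0$: since $S/\mathfrak{p}_i$ has only $\operatorname{Ext}^4\neq 0$ and $S/(\mathfrak{p}_1+\mathfrak{p}_2)$ only $\operatorname{Ext}^{7-d}\neq 0$, one finds for $d\le 1$ that $\operatorname{Ext}^{6-d}(S/I^{un},S)\cong\omega_{S/(\mathfrak{p}_1+\mathfrak{p}_2)}\cong S/(\mathfrak{p}_1+\mathfrak{p}_2)$ (up to a shift), with the intervening $\operatorname{Ext}$ modules vanishing, whereas for $d=2$ the module $S/I^{un}$ is Cohen--Macaulay. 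Dualizing the resolution, $\operatorname{Coker}(\partial_5^*)$ is therefore $0$ when $d=2$, is $\omega_{S/(\mathfrak{p}_1+\mathfrak{p}_2)}$ (of projective dimension $6$) when $d=1$, and is the first syzygy of $\operatorname{Ext}^6(S/I^{un},S)\cong\omega_{S/(\mathfrak{p}_1+\mathfrak{p}_2)}$ (a module of projective dimension $7$, so its syzygy has projective dimension $6$) when $d=0$; in every case $\projdim(\operatorname{Coker}(\partial_5^*))\le 6$, and Proposition~\ref{coker} yields $\projdim(S/I)\le\max\{6,6\}=6$.

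I expect the last paragraph to be the main obstacle: when $\mathfrak{p}_1$ and $\mathfrak{p}_2$ share fewer than two linear forms ($d\le 1$), $S/I^{un}$ need not be Cohen--Macaulay (the defect is $2-d$), so the naive linkage bound via Lemma~\ref{pdim-linkage} is too weak and one must instead identify the top $\operatorname{Ext}$ modules precisely. The saving feature is that $\mathfrak{p}_1+\mathfrak{p}_2$ is a \emph{linear} complete intersection, so its canonical module and the relevant syzygy have projective dimension exactly $6$ rather than something larger; this is the crux of the argument. When $d=2$ one may alternatively finish immediately, since $S/I^{un}$ is then Cohen--Macaulay and any directly linked ideal is Cohen--Macaulay by Theorem~\ref{PS}, whence $\projdim(S/I)\le 5$ by Lemma~\ref{pdim-linkage}, or since $I^{un}$ contains two linear forms and Lemma~\ref{linearform} applies.
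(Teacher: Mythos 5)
Your proof is correct, and although it shares the paper's skeleton, the way you close the hard cases is genuinely different. Like the paper, you write $I^{un}=\mathfrak{p}_1\cap\mathfrak{p}_2$ with $\mathfrak{p}_1$ a linear prime and $\mathfrak{p}_2=(x,y,z,q)$, move the quadric $q$ into $\mathfrak{p}_1$ (your replacement argument is exactly Lemma~\ref{KJ}), and split into cases according to the overlap $d$ of the two linear spans; the case $d\geq 2$ is dispatched the same way in both proofs (two linear forms in $I^{un}$, Lemma~\ref{linearform}). But for the nontrivial cases the paper argues non-uniformly: for $d=1$ it invokes the proof of Lemma~7.2 of \cite{HMMS13} to control $\operatorname{Coker}(\partial_4^*)$ of the height-three piece, and for $d=0$ it exhibits the explicit link $L=(xu,yv,sz,q):I^{un}$ whose resolution is certified in the Appendix (Lemma~\ref{lemma31}) and applies Lemma~\ref{pdim-linkage}. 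You instead run the Mayer--Vietoris sequence $0\to S/I^{un}\to S/\mathfrak{p}_1\oplus S/\mathfrak{p}_2\to S/(\mathfrak{p}_1+\mathfrak{p}_2)\to 0$, use that all three outer terms are Cohen--Macaulay (the last a complete intersection of $7-d$ linear forms), and read off that $\operatorname{Ext}^i(S/I^{un},S)$ vanishes for $i>6-d$ with top module the canonical module $\omega$ of $S/(\mathfrak{p}_1+\mathfrak{p}_2)$; this pins down $\operatorname{Coker}(\partial_5^*)$ as $0$, $\omega$, or a first syzygy of $\omega$, each of projective dimension at most $6$, so Proposition~\ref{coker} finishes all cases at once. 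Your route is self-contained (no appendix computation, no external citation), and it yields the sharper structural statement $\projdim(S/I^{un})=6-d$; what the paper's route buys is consistency with the explicit-linkage framework used throughout Section~5 and the Appendix, including an explicit directly linked ideal in the $d=0$ case. The only point that deserves emphasis in your write-up is the vanishing $\operatorname{Ext}^5(S/I^{un},S)=0$ when $d=0$, since the identification of $\operatorname{Coker}(\partial_5^*)$ with $\operatorname{im}(\partial_6^*)$, and hence with a syzygy of $\omega$, rests on it; you do state it, and it follows correctly from your long exact sequence.
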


\begin{proof} If $I^{un}= (x,y,z,w) \cap (u,v,s,q)$ where $q\in S_2$ and the rest in $S_1$. We may assume that $q \in (x,y,z,w)$ by Lemma \ref{KJ}, say $q=ax+by+cz+dw$. If ht $(x,y,z,w,u,v,s) \leq 5$, then $I^{un}$  contains two linear forms and pd$(S/I) \leq 5$ by Lemma \ref{linearform}. If ht $(x,y,z,w,u,v,s)=6$ then $I^{un}$ contains a linear form say $x$. We write $I^{un}=(x)+(y,z,w)\cap (v,s,q)$, with $q=by+cz+dw$. This composition is the sum of a linear form and a height three unmixed ideal of type $\langle 1,2;1,1 \rangle$. By the proof of \cite[Lemma 7.2]{HMMS13}, either $(y,z,w)\cap (v,s,q)$ contains a linear form or pd(Corker$(\partial_4^*)) \leq 5$ where $\partial_i$ is the $i^{th}$ differential map in the resolution of $(y,z,w)\cap (v,s,q)$. In the first case we get pd$(S/I) \leq 5$ by Lemma \ref{linearform}, and in the second case pd(Corker$(\partial_5^*)) \leq 6$ where $\partial_i$ is the $i^{th}$ differential map in the resolution of $I^{un}$. Hence pd$(S/I) \leq 6$ by Proposition \ref{coker} . We may assume that ht$(x,y,z,w,u,v,s)=7$. We have $I^{un}= (q, xu, xv, xs, yu, yv, ys, zu, zv, zs, wu, wv, ws)$, and we let $L= (xu, yv, sz,q): I^{un}$. By Lemma \ref{lemma31} , pd$(S/L) \leq 5$, and hence  pd$(S/I) \leq 6$ by Lemma \ref{pdim-linkage}.

\end{proof}
\begin{theorem}If $I^{un}$ is of type $\langle 1,1;1,2 \rangle$, then pd$(S/I) \leq 8$.
\end{theorem}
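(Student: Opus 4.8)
\emph{Setup.} I would begin by writing the primary decomposition $I^{un}=\mathfrak p_1\cap Q$. Since both associated primes have multiplicity one, Theorem~\ref{SN} forces $\mathfrak p_1$ and $\mathfrak p_2=\sqrt Q$ to be height four linear primes; the value $\lambda_1=1$ means the first component is $\mathfrak p_1=(x,y,z,w)$ itself, while $\lambda_2=2$ means $Q$ is $\mathfrak p_2$-primary with $e(S/Q)=2$, so $Q=(u,v,s,t)$-primary is one of the ideals classified in Proposition~\ref{multiplicity2}. Because $\mathfrak p_1\neq\mathfrak p_2$, the linear forms $x,y,z,w$ and $u,v,s,t$ span a space of dimension at least $5$. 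I would also record the inclusion $\mathfrak p_1 Q\subseteq \mathfrak p_1\cap Q=I^{un}$, which supplies the product quadrics needed later, and note that a linear form lies in $I^{un}$ exactly when it is a common linear form of $\mathfrak p_1$ and $Q$.

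\emph{Easy configurations.} The plan is a case analysis mirroring the proof for type $\langle 1,2;1,1\rangle$. If $\mathfrak p_1$ and $Q$ share two independent linear forms, then $I^{un}$ contains two linear forms and Lemma~\ref{linearform} gives $\mathrm{pd}(S/I)\le 6$. Next, whenever the generators of $\mathfrak p_1\cap Q$ can be expressed in at most eight linear forms --- which occurs when $Q$ falls into one of the few-variable cases of Proposition~\ref{multiplicity2} and $\mathfrak p_2$ shares enough forms with $\mathfrak p_1$ --- Corollary~\ref{extend} yields $\mathrm{pd}(S/I)\le\max(6,8)=8$. This is the configuration in which the bound $8$ is actually attained.

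\emph{Main case.} For the remaining situation, where $\mathfrak p_1$ and $\mathfrak p_2$ share few linear forms and $Q$ is non-degenerate (the cases of Proposition~\ref{multiplicity2} II), I would argue by linkage rather than by counting variables, since here $\mathfrak p_1\cap Q$ may involve more than eight independent linear forms and Corollary~\ref{extend} is too crude. Writing out the minimal generators of $\mathfrak p_1\cap Q$ (the quadrics of $\mathfrak p_2^2$, the products $\ell_1\ell_2$ with $\ell_1\in\{x,y,z,w\}$ and $\ell_2\in\{u,v,s,t\}$, and the extra generators of $Q$ lifted into $\mathfrak p_1$), I would choose a height four regular sequence $\underline\alpha\subset I^{un}$, set $L=(\underline\alpha):I^{un}$, and verify case by case that $\mathrm{pd}(S/L)\le 7$ using the explicit ideals collected in Appendix~A. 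Lemma~\ref{linked-ideals=pdim} makes this bound independent of the chosen link, and Lemma~\ref{pdim-linkage} then gives $\mathrm{pd}(S/I)\le\mathrm{pd}(S/L)+1\le 8$. As an alternative one could bound $\mathrm{pd}(S/I^{un})$ by applying Lemma~\ref{intersection} to the pair $\mathfrak p_1,\,Q$ and then transfer to $I$ through Proposition~\ref{coker}.

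\emph{Main obstacle.} The hard part is precisely this last step. Unlike the purely primary situation of Theorem~\ref{primary}, one cannot quote a single projective-dimension bound for $Q$, because $I^{un}$ is a genuine intersection of two components; each form of $Q$ produces a different intersection $\mathfrak p_1\cap Q$ whose minimal generators and whose linked ideal must be analysed separately. Moreover the linked ideal has large multiplicity $e(S/L)=e\bigl(S/(\underline\alpha)\bigr)-3$, so its projective dimension must be controlled by its explicit structure rather than by its multiplicity. Carrying out the bookkeeping --- enumerating the forms of $Q$, locating a regular sequence of the right length inside $I^{un}$, and confirming $\mathrm{pd}(S/L)\le 7$ in each instance --- is where the real work lies.
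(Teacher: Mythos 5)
Your plan follows the paper's own architecture --- decompose $I^{un}$ as a height-four linear prime intersected with a multiplicity-two primary ideal $Q$ classified by Proposition~\ref{multiplicity2}, dispose of configurations with two shared linear forms by Lemma~\ref{linearform} and of few-variable configurations by Corollary~\ref{extend}, and link the rest to explicit Appendix ideals via Lemma~\ref{pdim-linkage} --- so the toolkit is exactly right. But your case partition is not exhaustive, and that is a genuine gap.

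You identify ``the remaining situation'' with ``$Q$ non-degenerate (the cases of Proposition~\ref{multiplicity2} II)'', which implicitly assumes that a degenerate $Q$ always lands in one of your two easy configurations. It does not. Take $Q=(x)+(y,z,w)^2+(by+cz+dw,\,ey+fz+gw)$ (case I-(iii) of Proposition~\ref{multiplicity2}) with $b,\dots,g$ honest linear forms, and let the other component $\mathfrak p_1$ share only, say, $y$ and $z$ with the variables appearing in $Q$. Then $I^{un}$ contains no linear form at all (the only linear form of $Q$ is $x$, which does not lie in $\mathfrak p_1$), and its quadric generators can a priori involve up to twelve independent linear forms, so neither Lemma~\ref{linearform} nor Corollary~\ref{extend} applies; yet $Q$ is degenerate, so your linkage case as stated skips it. This is precisely where the paper spends most of its effort (its Cases 2 and 3): it either rules the configuration out --- e.g.\ by showing the quadrics of $I^{un}$ would generate an ideal of height at most three, impossible because the five quadrics generating $I$ lie in $I^{un}$ and ht$(I)=4$, or by showing $I+(x,y)$ would be a height-four ideal of multiplicity $4$ containing $I$, contradicting $e(S/I)=3$ --- or, when the configuration genuinely occurs, it computes an explicit link of the \emph{degenerate} intersection (Lemma~\ref{lemma32}, giving pd$(S/L)=5$ and hence pd$(S/I)\leq 6$). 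These elimination arguments are absent from your proposal, and without them (or without extending your case-by-case linkage analysis to degenerate $Q$) the proof does not close. A smaller point: your assertion that the eight-variable configuration is where the bound $8$ ``is actually attained'' is not established anywhere in the paper and is not needed for the statement.
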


\begin{proof} Suppose that $I^{un}$ is $(u,v,s,t) \cap L_2$ where $L_2$ is as Proposition \ref{multiplicity2}. \\
 $\bullet$ \underline {Case 1}. If $L_2 =  (x,y,z,w^2)$, then $I^{un}$ can be expressed by at most $8$ variables,  and pd$(S/I) \leq 8$ by Lemma \ref{extend}. \\
$\bullet$ \underline {Case 2}.  If $L_2= (x,y)+(z,w)^2+ (az+bw )$ with ht$(x,y,z,w,a,b)=6$. When ht$(u,v,s,t,x,y)=4$, then $I^{un}$ contains two linear forms and pd$(S/I) \leq 6$ by Lemma \ref{linearform}.  If ht$(u,v,s,t,x,y)=5$, then we can write $I^{un}=(x)+ (v,s,t)\cap ((y)+(z,w)^2+(az+bw))$ with ht$(v,s,t,y)=4$. The case of $(v,s,t)\cap ((y)+(z,w)^2+(az+bw))$ was studied in the proof of in \cite [Lemma 7.3]{HMMS13}.  We get either the quadrics of $(v,s,t)\cap ((y)+(z,w)^2+(az+bw))$ generate a height two ideal, or they are expressed with at most $6$ variables. Hence, the quadrics of $I^{un}$ generate a height three ideal, or they are expressed with at most $7$ variables. So pd$(S/I) \leq 7$. We then assume that ht$(u,v,s,t,x,y)=6$. We may assume that the degree of $a$ and $b$ is one and $az+bw \in (u,v,s,t) \cap L_2$, or else the quadrics of $I^{un}$ will be expressed with at most $8$ variables. Hence, if ht$(u,v,s,t,z,w)= 6$ then $(u,v,s,t) \cap L_2= (az+bw)+(u,v,s,t)(x,y,z^2, zw, w^2)$, and all quadrics of $I^{un}$ generate an ideal of height at most three.  This contradicts our assumption about $I$.  If ht$( u,v,s,t,z,w) \leq 5$, then we may assume that $z=u$. Hence $au+bw \in (u,v,s,t) \cap L_2 $. We must have $w \in (v,s,t)$ or $b \in (u,v,s,t)$. Hence ht$(x,y,z,w,u,v,s,t,a,b) \leq 8$, and all quadrics in $I^{un}$ can be expressed with at most $8$ variables. So pd$(S/I) \leq 8$.\\
 $\bullet$ \underline {Case 3}. If  $L_2= (x)+ (y,z,w)^2+ (by+cz+dw, ey+fz+gw)$  with {\small ht$\left(x,y,z,w,I_2\begin{pmatrix}b&c&d\\ e&f&g \end{pmatrix}\right) \geq 6$.} 
\normalsize We may assume that the degree of $b,c,d,e,f, g$  is $1$, and  $by+cz+dw, ey+fz+gw \in (u,v,s,t) \cap L_2 $, or else the quadrics of $I^{un}$ will be expressed with at most $8$ variables.  If ht$(x,y,z,w,u,v,s,t)=8$, then $(u,v,s,t) \cap L_2= (by+cz+dw, ey+fz+gw)+(u,v,s,t)(x,y^2, yz,yw, z^2, zw, w^2)$, and the quadrics of $I^{un}$ generate an ideal of height three. If ht$(x,y,z,w,u,v,s,t) \leq 7$ and $x\in(u,v,s,t)$ (we take  $x=u$), then we may write $I^{un}=(x)+  (by+cz+dw, ey+fz+gw)+(v,s,t)\cap (y,z,w)^2 $. By the proof of \cite[Lemma 7.3]{HMMS13}, we get that all quadrics of $I^{un}$ are expressed with at most $7$ variables. We suppose next that ht$(x,y,z,w,u,v,s,t) \leq 7$ and $x\notin(u,v,s,t)$. Suppose first that ht$(x,y,z,w,u,v,s,t) =7$ and  $y=u$. We let $I= (f_1l_1, f_2l_2, f_3l_3,by+cz+dw, ey+fz+gw)$ with $e(S/I)=3$, where $f_i=\alpha_ix+\beta_iy$, $\alpha_i, \beta_i \in k$ and $l_i$ are linear forms. But $I+(x,y)=(x,y,cz+dw, fz+gw)$ and ht$(x,y,I_2\begin{pmatrix}b&c&d\\ e&f&g \end{pmatrix}) =4$. Hence ht$(S/(I+(x,y)))=4 $ and $e(S/(I+(x,y)))=4$. This is impossible, since $I \subset (I+(x,y))$ and $e(S/I)=3$.   If ht$(x,y,z,w,u,v,s,t) = 6$, then we take  $y=u$ and $z=v$. We have $by+cz+dw, ey+fz+gw \in (u,v,s,t)$.  Hence $d \in (s)$ and $g=0$. The case $d=g=0$ cannot happen due to the height restriction on the matrix of minors $I_2$. So, $I^{un}= (xy, xz, xs, xt, y^2, yz,yw, z^2, zw,  w^2t, w^2s,  by+cz+ws, ey+fz)$. Since ht$(x,y,z,w,s,t,I_2\begin{pmatrix}b&c&d\\ e&f&g \end{pmatrix}) \geq 7$ then, by Lemma \ref{lemma32}, pd$(S/L) =5$ where $L$ is directly linked to $I^{un}$. This implies  that pd$(S/I) \leq 6$ by Lemma \ref{pdim-linkage}.
  If ht$(x,y,z,w,u,v,s,t)=5$, then we may assume further that $w=s$. Hence  $I^{un}=(xt, xy, xz, xw, y^2, yz, yw, z^2, zw, w^2, by+cz+dw, ey+fz+gw)$. This case can be treated in a similar manner as whenever ht$(x,y,z,w,u,v,s,t)=7$. Finally, if ht$(x,y,z,w,u,v,s,t) = 4$, then $x \in(u,v,s,t)$, and this case is already treated above.\\
$\bullet $ \underline {Case 4}.  Assume that $L_2= (x, y, z,w)^2+ (ax+by+cz+dw, ex+fy+gz+hw, kx+ly+mz+nw)$  with ht$(x,y,z,w,I_3(M))\geq 6$, where {\small $M=\begin{pmatrix} a&b&c&d\\ e&f&g&h\\k&l&m&n \end{pmatrix}$}. \normalsize We use a similar argument as the previous cases. If ht$(x,y,z,w,u,v,s,t)=8$, then the quadrics of  $I^{un}$ generate a height at most three.  If ht$(x,y,z,w,u,v,s,t)=6$ or $7$,  then both cases will be treated the same way as whenever ht$(x,y,z,w,u,v,s,t) = 7$ in Case 3.  If ht$(x,y,z,w,u,v,s,t)=5$, then we may assume that $x=u$, $y=v$  and $z=s$. Since at least two of $ax+by+cz+dw, ex+fy+gz+hw, kx+ly+mz+nw \in (x,y,z,t)$, then we may assume that $d=t$ and $h=0$ after a linear change of variables. We get either $(u,v,s,t) \cap L_2=(ax+by+cz+tw, ex+fy+gz, kx+ly+mz)+(x,y,z,t)\cap (x,y,z,w)^2$, or $ (ax+by+cz+tw, ex+fy+gz)+(x,y,z,t)\cap[(x,y,z,w)^2+(kx+ly+mz+nw)]$.  In the first case, since ht$(x,y,z,w, I_3(M))=6$, we get ht$\begin{pmatrix} e&f&g\\ k&l&m \end{pmatrix} \geq 2$ mod $(x,y,z,w)$. By Lemma \ref{lemma33} we obtain pd($S/L)=5$ for an ideal $L$ directly linked to $I^{un}$. Hence, pd$(S/I) \leq 6$ by Lemma \ref{pdim-linkage}. In the second case, we suppose $n \notin t$ or else we are back to the first case. Since ht$(I_3(m)) \geq 2$ mod $(x,y,z,w)$ then, by Lemma \ref{lemma34}, pd$(S/L)=5$ for an ideal $L$ directly linked to $I^{un}$. This implies that pd$(S/I) \leq 6$ by Lemma \ref{pdim-linkage}. If ht$(x,y,z,w,u,v,s,t)=4$, then $x=u$, $y=v$, $z=s$ and $w=t$. $I^{un}= (ax+by+cz+tw, ex+fy+gz, kx+ly+mz)+(x,y,z,w)^2$ which is $\mathfrak p$-primary. Hence by Proposition \ref{primary}, pd$(S/I) \leq 7$. \\
  $\bullet $ \underline {Case 5}. If $L_2= (x,y,z,w)^2+(ax+by, ex+fy+nz, kx+ly+nw, (af-be)w-(al-kb)z)$ with ht$(a,b,n)=3$,  then in most cases the quadrics of $I^{un}$ generate an ideal of height at most three, or we can use the same method as whenever ht$(x,y,z,w,u,v,s,t) = 7$ of Case 3 and show a contradiction. The only remaining cases to study is whenever $(u,v,s,t)\cap L_2=(ax+by, kx+ly+nw)+(x,y,z,t)\cap ((x,y,z,w)^2, ex+fy+nz)$ or $(ax+by, kx+ly+mz+nw)+(x,y,s,w)\cap ((x,y,z,w)^2, ex+fy+nz)$. Both cases will be treated the same way, so we will prove only one of them. Suppose   $I^{un}=(ax+by, kx+ly+nw)+(x,y,z,t)\cap ((x,y,z,w)^2, ex+fy+nz)=(x,y,z)^2 +(wx,wy,wz, w^2t,ax+by, kx+ly+nw, ex+fy+nz)$, then by Lemma \ref{lemma35}, pd$(S/L)=5$ for an ideal $L$ directly linked to $I^{un}$. Hence pd$(S/I)\leq 6$.\\
 $\bullet $ \underline {Case 6}. Finally, let $L_2 = (x,y,z,w)^2+(ax+gy, ex+gz, ey-az, kx+ly+mz+nw)$. We may suppose that   $kx+ly+mz+nw \in (u,v,s,t)$ or else the quadrics of $I^{un}$ will be expressed in terms of  at most $8$ variables. If ht$(x,y,z,w,u,v,s,t)=8$, then the quadrics of $I^{un}$ can be expressed with at most $8$ variables.  If ht$(x,y,z,w,u,v,s,t)=7$, then we may assume that $x=u$. This case will be solved the same way as whenever ht$(x,y,z,w,u,v,s,t) = 7$ of Case 3.   If ht$(x,y,z,w,u,v,s,t)=6$, then we may assume that $x=u$ and $y=v$. In this case, the quadrics of $I^{un}$ will generate an ideal of at most three which is a contradiction.  If ht$(x,y,z,w,u,v,s,t)=5$, then we may assume that $x=u$, $y=v$, and $z=s$ or $w=t$. The only cases to study are $I^{un}=(ax+gy, ex+gz, ey-az, kx+ly+mz+nw)+ (x,y,z,t)\cap (x,y,z,w)^2$ or $I^{un}=(ax+gy, ex+gz, ey-az, kx+ly+mz+nw)+ (x,y,s,w)\cap(x,y,z,w)^2$. Both cases will be treated the same way, so we prove the first one. In the first case, $I^{un}=(ax+gy, ex+gz, ey-az, kx+ly+mz+nw, x^2, xy, xz, xw, y^2, yz, yw, z^2, zw, w^2t )$. By Lemma  \ref{lemma36}, pd$(S/L) \leq 5$  for an ideal $L$ linked to $I^{un}$. Hence pd$(S/I) \leq 6$. \end{proof}

\begin{theorem}If $I^{un}$ is of type $\langle 1,1,1;1,1,1 \rangle$, then pd$(S/I) \leq 6$.
\end{theorem}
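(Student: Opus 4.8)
The plan is to first pin down $I^{un}$ exactly and then argue according to how many linear forms the three components share. Since $I^{un}$ is of type $\langle 1,1,1;1,1,1\rangle$ it has three associated primes $\mathfrak p_1,\mathfrak p_2,\mathfrak p_3$ with $e(S/\mathfrak p_i)=1$ and $\lambda(S_{\mathfrak p_i}/I^{un}_{\mathfrak p_i})=1$. By Theorem \ref{SN} each $\mathfrak p_i$ is generated by four linear forms, so pd$(S/\mathfrak p_i)=4$; and since $\lambda_i=1$ the $\mathfrak p_i$-primary component of $I^{un}$ is $\mathfrak p_i$ itself. Hence $I^{un}=\mathfrak p_1\cap\mathfrak p_2\cap\mathfrak p_3$ is a \emph{reduced} intersection of three height-four linear primes. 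Write $V_i=(\mathfrak p_i)_1\subset S_1$ for the four-dimensional spaces of defining linear forms and set $d=\dim(V_1+V_2+V_3)=$ ht$(\mathfrak p_1+\mathfrak p_2+\mathfrak p_3)$. The argument then splits on the value of $d$.

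For the low-dimensional cases I would argue directly. If $d\le 5$, then
$$\dim(V_1\cap V_2\cap V_3)\ \geq\ \dim V_1+\dim V_2+\dim V_3-2d\ =\ 12-2d\ \geq\ 2,$$
so $I^{un}$ contains two independent linear forms and Lemma \ref{linearform} gives pd$(S/I)\le 6$. If $d=6$, then every generator of each $\mathfrak p_i$, and hence of $I^{un}$, lies in the polynomial subring on the six linear forms spanning $V_1+V_2+V_3$, which form a regular sequence; thus $I^{un}$ is extended from $k[f_1,\dots,f_6]$ and Corollary \ref{extend} yields pd$(S/I)\le\max(4+2,6)=6$.

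The real content is the range $d\ge 7$, where the hypothesis ht$(I)=4$ is decisive: a height-four ideal generated by only five quadrics cannot sit inside $\mathfrak p_1\cap\mathfrak p_2\cap\mathfrak p_3$ once the three linear spaces are too spread out. After passing to the span (so $d$ variables), one checks that vanishing on the third component forces the quadrics of the triple intersection into a bilinear, antisymmetric shape, which produces an extra ruled locus in $V(I)$ and drops ht$(I)$ below $4$; this is exactly what occurs when the three spaces are pairwise transverse in $d=8$. The plan is to carry this configuration analysis through to show $d=8$ is impossible and that $d=7$ reduces to a single normal form, in which each pair satisfies $\dim(V_i\cap V_j)=8-\dim(V_i+V_j)\ge 1$, giving pairwise common linear forms $\ell_{ij}\in V_i\cap V_j$. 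Using products $\ell_{jk}\cdot(\text{form in }V_i)$, all of which lie in $\mathfrak p_1\cap\mathfrak p_2\cap\mathfrak p_3$, I would select four that form a regular sequence $\underline\alpha$, link $I^{un}$ to $L=(\underline\alpha):I^{un}$, and show pd$(S/L)\le 5$ by an explicit resolution of the sort used for the $\langle 1,2;1,1\rangle$ case; then pd$(S/I)\le 6$ by Lemma \ref{pdim-linkage}. When instead $I^{un}$ has a short resolution one feeds it into Proposition \ref{coker} and bounds pd$(\mathrm{Coker}(\partial_5^*))\le 6$.

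I expect the main obstacle to be precisely this last range $d\ge 7$: classifying which mutual positions of three codimension-four linear spaces actually admit a height-four ideal generated by five quadrics, and then controlling pd$(S/L)$ (or pd$(\mathrm{Coker}(\partial_5^*))$) for each surviving configuration. Everything else is bookkeeping. The delicate point is that, unlike pd$(S/I^{un})$ — which for a reduced union of linear spaces can already reach $7$, so that Lemma \ref{intersection} alone does not suffice — the quantity pd$(S/I)$ is pinned at $\le 6$ only because the five-quadric, height-four hypothesis simultaneously caps $d$ and forces the linked ideal to be Cohen--Macaulay-close enough to satisfy pd$(S/L)\le 5$.
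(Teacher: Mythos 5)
Your reduction of $I^{un}$ to a reduced intersection $\mathfrak p_1\cap\mathfrak p_2\cap\mathfrak p_3$ of linear primes and your treatment of $d\le 6$ are correct: for $d\le 5$ the dimension count gives two independent linear forms in $I^{un}$ and Lemma \ref{linearform} applies, while for $d=6$ Corollary \ref{extend} gives pd$(S/I)\le\max(4+2,6)=6$ (in fact Corollary \ref{extend} alone covers all $d\le 6$). The genuine gap is the range $d\ge 7$, which you yourself flag as ``the real content'': what you give there is a plan resting on three unverified assertions. First, the claim that $d=8$ is impossible is only argued (implicitly) for the pairwise-transverse configuration, where your ruled-locus observation is indeed correct --- the quadrics of the triple intersection become antisymmetric bilinear forms vanishing on the $5$-fold $\{y=\lambda x\}$, forcing ht$(I)\le 3$ --- but $d=8$ admits many other incidence configurations (e.g.\ $V_3\subset V_1+V_2$, or $\dim(V_1+V_2)=7$ with $V_3$ adding a new direction), and nothing at all is said about $9\le d\le 12$. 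Second, $d=7$ does \emph{not} reduce to a single normal form: the pairwise intersection dimensions and the triple intersection can be distributed in several inequivalent ways, so stratifying by $d$ alone is too coarse. Third, the claimed bound pd$(S/L)\le 5$ for the linked ideal is asserted with no resolution produced; this is exactly the kind of statement the paper only obtains through explicit Appendix computations.

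For comparison, the paper's proof of this theorem avoids any classification of configurations and uses no linkage. It splits on the pairwise heights: if ht$(L_i+L_j)=8$ for some pair, then every quadric of $I^{un}$ lies in the product $L_iL_j$, hence involves at most those $8$ variables, and Lemma \ref{extend1}/Corollary \ref{extend} applies; if ht$(L_i+L_j)\le 7$ for all pairs, then either $I^{un}$ contains a linear form --- reducing, via the proof of \cite[Lemma 7.4]{HMMS13}, to the height-three situation in one fewer variable --- or again all generators live in at most $8$ variables. Note that this argument actually concludes pd$(S/I)\le 8$ (Corollary \ref{extend} with $t=8$), which is what the paper's main application needs but is weaker than the ``$6$'' in the statement of the theorem; your attempt to genuinely reach $6$ is precisely what pushes you into the configuration analysis that is missing from your proposal, so as written the proposal does not prove the stated bound for $d\ge 7$.
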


\begin{proof} Write $I^{un}= \cap_{i=1}^3L_i$ where $L_i=(x_i, y_i, z_i, w_i)$. If ht$(L_i+ L_j)=8$ for any $i \neq j$, then all quadrics of $J$ will be expressed with at most $8$ variables which are $x_i, y_i, z_i, w_i, x_j, y_j, z_j, w_j$. Otherwise ht$(L_i+ L_j)\leq 7$ for any $i \neq j$. In that case, either all generators are expressed with at most $8$ variables or $I^{un}$ contains a linear form. In the latter case, we write $I^{un}= (x)+ I'$ where $I'=\cap_{i=1}^3L_i'$ and $L_i'=( y_i, z_i, w_i)$. Hence by the proof of \cite[Lemma 7.4]{HMMS13}, either $I'$ contains a linear form or the quadrics of $I'$ are expressed with at most $6$ variables or all generators are expressed with at most $6$ variables. So either $I^{un}$ contains two linear forms or the quadrics of $I^{un}$ are expressed with at most $7$ variables or the generators of $I^{un}$ are expressed with at most $8$ variables. Hence, pd $(S/I)\leq 8$ by Lemma \ref{linearform} or Corollary \ref{extend}.
\end{proof}

\section* {Appendix: Resolution of Primary Ideals}
We follow the same techniques as  of \cite[Appendix A]{HMMS13}. We resolve the unmixed primary ideals generically and check the exactness of the resolution by using the Buchsbaum-Eisenbud exactness criteria. For that, if $\mathbb{F}$ is the resolution of $S/I$ and $\partial_i$ denotes the $i^{th}$ differential map, then it suffices to check that ht$(I_{r_j}(\partial_j)) \geq j$ for all $j$, where $r_j = \sum_{i=j}^p(-1)^{p-i}$ rank$(F_i)$ and $I_{r}(\partial_j)$ is the $r \times r$ minors of the matrix associated to the $j^{th}$ differential. Further if ht$(I_{r_j}(\partial_j)) \geq j+1$ for $j >$ ht$(I)$ then the ideal is unmixed by \cite [Proposition 2.4] {HMMS16}. In the case when ht$(I)=4$, it suffices to show
$$\mbox{ht}(I_{r_j}(\partial_j) )\geq \left\{\begin{array}{ccc} j& \mbox{for}& j = 1, 2,3 ,4 \\ j+1& \mbox{for}& j  \geq 5 \end{array} \right.$$
We give a complete proof for lemma \ref{lemma1}, the rest is done in a similar manner. The computations were done by using the computer algebra Macaulay 2 \cite{GS}. 

\begin{customlemma}{A.1}\label {lemma1}If $$J = (x, y, z, w)^2+ (ax+by+cz+dw, ex+fy+gz+hw, kx+ly+mz+nw)$$ where ht$\left(x, y, z, w,I_3\begin{pmatrix} a&b&c&d\\
e&f&g&h\\k&l&m&n \end{pmatrix} \right)=6$  then $J$ is $(x,y,z,w)$-primary, pd$(S/J) =5$ and $e(S/J)= 2$. 
\end{customlemma}
 \begin{proof} First, we let $N= (x,y,z,w)^2$ with a minimal free resolution obtained by using the Eagon- Northcott complex $$0 \to S^4  \stackrel{D_4}{\longrightarrow}S^{15} \stackrel{D_3}{\longrightarrow} S^{20} \stackrel{D_2}{\longrightarrow}  S^{10} \stackrel{D_1}{\longrightarrow}S \to S/N \to 0 $$ 
 
 We then consider the complex $$0 \to R^3 \stackrel{\partial_5}{\longrightarrow}R^{16}\stackrel{\partial_4}{\longrightarrow}R^{33} \stackrel{\partial_3}{\longrightarrow}R^{32} \stackrel{\partial_2}{\longrightarrow}R^{13} \stackrel{\partial_1}{\longrightarrow}R$$ with 
 $\partial_1 = (ax+by+cz+dw, ex+fy+gz+hw, kx+ly+mz+nw, D_1)$
 $$\partial_2=\tiny{\left( \begin{array}{ccccccccccccccccc} 
     &-a&0&0& 0 & -e &0 & 0 & 0&-k&0 &0&0 \\
      &-b&-a&0&0&-f&-e& 0&0& -l& -k &0 & 0 \\
   & 0&-b&0 & 0&0 &-f&0&0&0&-l&0 &0\\
     & -c&0&-a&0&-g&0&-e&0&-m&0&-k& 0 \\
 D_2   &0&-c&-b&0&0&-g&-f& 0&0&-m&-l&0\\
     &0&0&-c&0&0&0&-g&0&0 &0 & -m &0\\
    &-d&0&0&-a&-h& 0 & 0&-e&-n&0&0  &-k\\
    & 0&-d&0 &-b& 0 &-h& 0 & -f& 0&-n& 0 & -l\\
     &0&0&-d &-c& 0&0 & -h& -g &0 & 0 & -n& -m \\
 &0&0&0&-d& 0&  0&  0&  -h& 0 & 0&  0&  -n\\
  0 \ldots 0 &x&y&z&w&0 &0 &0 &0 & 0 &0 &0 & 0 \\
      0 \ldots  0&0&0&0& 0 & x & y & z&w&0& 0 &0 & 0\\
      0 \ldots 0&0&0&0&0 &0 &0 & 0 & 0 & x & y & z&w  \end{array} \right)}$$
$$\partial_3=\Tiny{\left( \begin{array}{cccccccccccccccccccccc}
&a& 0&0&0&0&0&e&0& 0&  0 & 0 & 0 & k & 0 & 0 & 0 & 0 & 0\\
   &b&0&0&0&0&0&f&0 &0&0 &0 & 0 &l&   0&  0&  0&  0 & 0  \\
     &0&a&0&0&0&0&0&e& 0&  0 & 0 & 0&  0&  k&  0&  0&  0 & 0\\
&0&b&a&0&0&0&0&f &e&  0&0&  0&  0 & l&  k & 0&  0 & 0  \\
  &  c&b&0&0&0&0&g&f & 0&  0 & 0 & 0 & m & l&  0&  0&  0 & 0\\
  &0&0&b&0&0&0&0  &0& f& 0 & 0&  0 & 0 & 0 & l & 0 & 0 & 0\\
    &0&c&0&0&0&0&0&g & 0&  0 & 0 & 0 & 0 & m & 0 & 0&  0&  0  \\
  &0&0&c&0&0&0&0&0 & g & 0 & 0&  0  &0 & 0 & m & 0 & 0 & 0\\
   &0&0&0&a&0&0&0&0 &  0 & e & 0 & 0 & 0 & 0 & 0 & k&  0 & 0\\
   &0&0&0&b&a&0&0&0& 0 & f&  e & 0 &0 & 0&  0 & l & k & 0\\
   &d&0&0&b&0&0&h&0 &0 & f & 0 & 0 & n & 0 & 0 & l & 0 & 0\\
    & 0&0&0&0&b&0&0&0 &0 & 0 & f & 0 & 0  &0 & 0&  0&  l & 0\\
D_3     & 0&0&0&c&0&a&0&0&0&  g & 0 & e & 0 & 0&  0 & m & 0&  k\\
      &0&d&0&c&0&0&0&h&0&  g & 0&  0&  0&  n & 0 & m & 0 & 0\\
      &0&0&0&0&c&b&0&0&0&0 &g & f& 0& 0&  0&  0 & m & l \\
        &0&0&d&0&c&0&0&0& h & 0 & g&  0 & 0&  0 & n  &0 & m & 0  \\
  &  0&0&0&0&0&c&0&0 & 0 & 0 & 0  &g&  0 & 0&  0&  0&  0 & m  \\
  &0&0&0&d&0&0&0&0 &0  &h & 0  &0&  0&  0 & 0  &n & 0  &0  \\
   &0&0&0&0&d&0&0&0& 0 & 0 & h & 0 & 0 & 0 & 0 & 0 & n & 0 \\
      &0&0&0&0&0&d&0&0&0 & 0 & 0 & h & 0 & 0 & 0 & 0 & 0 & n \\
 0 \dots 0  &-y&-z &0 &-w& 0&0&0&0& 0 & 0  &0 & 0&  0&  0 & 0  &0 & 0 & 0  \\
 \vdots   & x&0&-z&0&-w&0&0&0 & 0 & 0 & 0 & 0 & 0 & 0&  0&  0&  0&  0 \\
   &0&x&y&0&0&-w&0&0&0&  0 & 0 & 0 & 0 & 0 & 0 & 0&  0 & 0 \\
     &0&0&0&x&y&z&0&0& 0 & 0 & 0&  0 & 0 & 0&  0 & 0 & 0 & 0 \\
  &0&0&0&0&0&0&-y&-z & 0 & -w &0 & 0  &0 & 0&  0&  0&  0  &0  \\
    &0&0&0&0&0&0&x& 0& -z& 0  &-w &0 & 0 & 0&  0 & 0 & 0&  0  \\
  & 0&0&0&0&0&0&0&x& y& 0 & 0 & -w &0 & 0&  0  &0  &0 & 0  \\
    &0&0&0&0&0&0&0&0  &0 & x& y & z&  0 & 0&  0  &0 & 0  &0  \\
     &0&0&0&0&0&0&0&0& 0&  0 & 0&  0 & -y& -z &0&  -w& 0  &0  \\
     & 0&0&0&0&0&0&0&0 &0&  0&  0&  0 & x & 0 & -z& 0 & -w& 0 \\
  & 0&0&0&0&0&0&0&0& 0& 0 & 0 & 0 & 0 & x & y & 0 & 0 & -w\\
0 \dots 0    &0&0&0&0&0&0&0&0&  0&  0&  0 & 0  &0  &0&  0  &x&  y&  z\\
 \end{array} \right)}$$
 
  $$\partial_4=\Tiny{\left( \begin{array}{ccccccccccccccccc} 
     &  -a& 0  &0 & 0 & -e &0 & 0 & 0&  -k &0&  0 & 0  \\
     & -b& 0 & 0 & 0 & -f &0 & 0  &0&  -l &0 & 0  &0  \\
     &  -c &0  &0  &0 & -g& 0 & 0&  0 & -m& 0  &0&  0  \\
     &  0  &-a &0&  0 & 0  &-e &0& 0  &0 & -k &0&  0  \\
     &  0 & -b& 0 & 0 & 0&  -f &0  &0  &0&  -l &0&  0  \\
     &  0 & 0 & -a &0  &0&  0&  -e &0&  0 & 0  &-k& 0  \\
     &  0  &c & 0&  -a& 0&  g&  0  &-e &0  &m&  0  &-k \\
     & 0 &-c &-b &0&  0 & -g &-f& 0&  0  &-m& -l& 0  \\
     & -d &-c &0  &0 & -h& -g &0  &0 & -n &-m &0 & 0  \\
     & 0 & 0  &0&  -b& 0  &0  &0&  -f &0&  0&  0&  -l \\
  D_4   &  0  &0 & -c &0 & 0&  0&  -g& 0&  0 & 0 & -m& 0  \\
      &  0&  0  &0 & -c& 0&  0  &0  &-g& 0&  0&  0 & -m \\
     &  0  &-d &0 & 0 & 0 & -h &0  &0&  0 & -n &0 & 0  \\
     & 0 & 0 & -d& 0&  0  &0  &-h &0 & 0&  0&  -n &0  \\
      &  0 & 0  &0&  -d& 0 & 0&  0 & -h &0&  0  &0  &-n \\
     0 \ldots 0 &  z  &w & 0  &0  &0&  0&  0&  0 & 0  &0&  0&  0  \\
     \vdots  &  -y& 0  &w  &0 & 0&  0&  0 & 0&  0&  0  &0  &0  \\
      &  x&  0 & 0&  w & 0 & 0  &0  &0  &0  &0&  0 & 0  \\
     &  0  &-y &-z& 0 & 0 & 0 & 0  &0&  0&  0&  0&  0  \\
     &  0 & x  &0&  -z& 0&  0  &0 & 0 & 0&  0 &0  &0  \\
     &  0 & 0 & x  &y&  0 & 0 & 0 & 0&  0&  0  &0&  0  \\
      & 0&  0&  0&  0&  z&  w&  0 & 0 & 0  &0 & 0 & 0  \\
    & 0 & 0 & 0&  0 & -y &0 & w&  0  &0 & 0&  0&  0  \\
    & 0  &0&  0&  0&  x  &0 & 0 & w  &0  &0 & 0&  0  \\
    &  0&  0&  0  &0 & 0  &-y &-z &0  &0  &0 & 0 & 0  \\
   & 0  &0&  0  &0  &0 & x&  0 & -z &0&  0 & 0  &0  \\
    & 0  &0 & 0 & 0 & 0&  0 & x  &y&  0&  0 & 0  &0  \\
    &  0  &0  &0 & 0 & 0  &0  &0&  0&  z&  w & 0 & 0  \\
     & 0  &0 & 0 & 0&  0&  0 & 0 & 0 & -y& 0 & w & 0 \\
     &  0 & 0 & 0 & 0&  0&  0&  0&  0 & x & 0 & 0 & w  \\
      &  0  &0 & 0&  0&  0&  0  &0 & 0 & 0 & -y &-z &0  \\
    &  0 & 0 & 0 & 0 & 0 & 0 & 0 & 0 & 0 & x & 0 & -z \\
   0 \ldots 0  &  0& 0 & 0 & 0 & 0&  0&  0 & 0 & 0 & 0&  x&  y  
 \end{array} \right)}$$

and  $\partial_5=\tiny{\left( \begin{array}{ccccccccccccc}  
   a & e & k \\
    b & f & l \\
     c & g & m  \\
     d & h & n \\
      -w& 0 & 0 \\
       z & 0 & 0 \\
       -y &0 & 0 \\
       x & 0 & 0 \\
      0 & -w &0  \\
      0 & z & 0 \\
      0 & -y &0 \\
       0 & x & 0 \\
       0 & 0 & -w \\
      0 & 0 & z \\
      0 & 0 & -y \\
      0 & 0 & x  
  \end{array} \right)}$
  
  We note that $x^2 \in I_1(\partial_1)$ and $x^{12}, y^{12} \in  I_{12}(\partial_2)$. Further we note that  $x^{20}, y^{20}$ and $z^{20}\in  I_{20}(\partial_3)$ and $x^{13}, y^{13}, z^{13}$ and $w^{13} \in I_{13}(\partial_4)$. Finally $x^3, y^3, z^3, w^3$ and $I_3 \begin{pmatrix} a&b&c&d\\
e&f&g&h\\k&l&m&n \end{pmatrix} \in I_3(\partial_5)$. \\
Since ht$\left( I_3 \begin{pmatrix} a&b&c&d\\
e&f&g&h\\k&l&m&n \end{pmatrix} \right) \geq 2 $ mod $(x,y,z,w)$ then the complex is exact and resolves $J$. So $J$ is unmixed with pd$(S/J)=5$. Further $\lambda(S_{\mathfrak p} / J_{\mathfrak p})=2$ and  $\sqrt J = (x,y,z,w)$. Hence, $J$ is $(x,y,z,w)$-primary with multiplicity $e(S/J)=2$.

 \end{proof}
 
 \begin{customlemma}{A.2} \label{lemma2} If $$J = (x,y,z,w)^2+(ax+by, ex+fy+nz, kx+ly+nw, (al-bk)z-(af-be)w )$$where ht$\left(x,y,z,w,n,I_2\begin{pmatrix} a&b\\ e&f \\ k&l\end{pmatrix}\right)=7$, then $J$ is $(x,y,z,w)$-primary with pd$(S/J)=6 $ and $e(S/J)=2$. 
 \end{customlemma}
 
 \begin{customlemma}{A.3} \label{lemma3} If $$J = (x,y,z,w)^2+(ax+by, ex+bz, ey-az, kx+ly+mz+nw)$$ with ht$(x,y,z,w,a,b,e)=8$, then $J$ is $(x,y,z,w)$- primary, pd$(S/J)=6 $ and $e(S/J)=2$. \end{customlemma}
 
 \begin{customlemma}{A.4} \label{lemma4} If $$J =(y,z,w)^2+(x, by+cz+dw)$$ with ht$(x,y,z,w,b,c,d)\geq 6$, then $J$ is $(x,y,z,w)$-primary, pd$(S/J)=5$ and $e(S/J)=3$.
  \end{customlemma}
  
   \begin{customlemma}{A.5} \label{lemma5} If $$J =(x,y,z,w)^2+(ax+ by+cz+dw, ex+fy+gz+hw)$$ with ht$\left(I_2 \begin{pmatrix}a&b&c&d\\ e&f&g&h \end{pmatrix}\right) \geq 2$ mod $(x,y,z,w)$, then $J$ is $(x,y,z,w)$-primary, pd$(S/J)=5$ and $e(S/J)=3$.\\
   
       Furthermore\\
   $ L= (x^2, y^2, z^2, w^2):J=(x^2, y^2, z^2, w^2, xyzyw, x(yz(bg-cf)- yw(bh-df)+zw(ch-dg), y(xz(ag-ce)-xw(ah-de)+zw(ch-dg),z(xy(af-be)-xw(ah-de)+yw(bh-fd)), w(xy(af-be)-xz(ag-ce)+yz(bg-cf))$ and pd$(S/L)=6$.
  \end{customlemma}
  
     \begin{customlemma}{A.6} \label{lemma6} If $$J=(x,y,z,w)^2+(ax+by, ex+bz, ey-az)$$ with ht$(x,y,z,w,a,b,e)=7$, then $J$ is $(x,y,z,w)$-primary, pd$(S/J)=6$ and $e(S/J)=3$.
  \end{customlemma}
  
    \begin{customlemma}{A.7} \label{lemma7} If $$J=(x,y,z^2, wz, w^3, w^2+az)$$ then $J$ is $(x,y,z,w)$-primary with pd$(S/J)=4$ and e$(S/J)=3$.
    \end{customlemma}
    
    \begin{customlemma}{A.8} \label{lemma8} If   $$J=(x)+(y,z)^2+( wy, wz, w^2, by+cz+dw)$$ with ht$(x,y,z,w,b,c,d) \geq 6$  then $J$ is $(x,y,z,w)$-primary with pd$(S/J)=5$ and e$(S/J)=3$.
    \end{customlemma}
     
    \begin{customlemma}{A.9} \label{lemma9}   If $$J=(x)+(y,z)^2+( wy, wz, w^2+by+cz, ey+fz)$$ with ht$(x,y,z,w,e,f)=6$  then $J$ is $(x,y,z,w)$-primary with  pd$(S/J)=5$ and e$(S/J)=3$.
         \end{customlemma}
         
    \begin{customlemma}{A.10} \label{lemma10}  If   $$J=(x,y,z)^2+(wx,wy,wz,w^2+ ax+by+cz, ex+fy+gz, kx+ly+mz)$$ with ht$\left(I_2 \begin{pmatrix}e&f&g\\ k&l&m\\ \end{pmatrix}\right)\geq 2$ mod $(x,y,z,w)$, then $J$ is $(x,y,z,w)$-primary  with  pd$(S/J)=5$ and e$(S/J)=3$.\\
    
    Furthermore\\
     $L=(x^2, y^2, z^2, w^2+ax+by+cz):J=(x^2, y^2, z^2, w^2+ax+by+cz, xyz, w(xy(el-kf) - xz(em-gk) + yz(fm-gl))$ and pd$(S/L)=5$
   \end{customlemma}
         
  \begin{customlemma}{A.11}\label{lemma11}  If   $$J=(x,y,z)^2+(wx,wy,wz,w^2+ ax+by+cz, ex+fy, kx+fz, ky-ez)$$ with ht$(x,y,z,w,e,f,k)=7$, then $J$ is $(x,y,z,w)$-primary with pd$(S/J)=6$ and $e(S/J)=3$.\\
  
  Furthermore, \\
  $L=(x^2, y^2, z^2, w^2+ ax+by+cz):J=(x^2, y^2, z^2, w^2+ ax+by+cz, xyz, xywk + xzwe - yzwf)$ and pd$(S/L)=5$
  \end{customlemma}  
  
    \begin{customlemma}{A.12}\label{lemma12}  If   $$J= (x, y)+ (z,w)^3+ (az+bw )  $$ with ht$(x,y,z,w,a,b)=6$, then $J$ is $(x,y,z,w)$-primary with pd$(S/J)=5$ and $e(S/J)=3$.
  \end{customlemma}
  
        \begin{customlemma}{A.13}\label{lemma13}  If   $$J=(x)+y(y, z, w,)+( z, w)^3+(ay+bz+cw, dy+z^2 )$$ with ht$(x,y,z,w, b,c)=$ ht$(x,y,z,w,c,d)=6$, then $J$ is $(x,y,z,w)$-primary with pd$(S/J)=5$ and $e(S/J)=3$.
  \end{customlemma}   
  
     \begin{customlemma}{A.14}\label{lemma14}  If   $$J=(x)+y(y, z, w,)+( z, w)^3+(ay+bz+cw, dy+zw )$$ with ht$(x,y,z,w,b,c,d)=7$, then $J$ is $(x,y,z,w)$-primary with pd$(S/J)=5$ and $e(S/J)=3$.
  \end{customlemma}

    \begin{customlemma}{A.15}\label{lemma15}  If   $$ J=(x)+y(y, z, w)+( z, w)^3+( ay+bz+cw, cy+z^2, by-zw)$$ or
    $$ J=(x)+y(y, z, w)+( z, w)^3+(ay+bz+cw, cy+zw, by-w^2)$$ or 
    $$J=(x)+y(y, z, w)+( z, w)^3 +(a y+bz+cw, by+zw, cy-z^2 )$$
    
     with ht$(y,z,w, b,c)=5$, then $J$ is $(x,y,z,w)$-primary with pd$(S/J)=5$ and $e(S/J)=3$.
     
  \end{customlemma}

     \begin{customlemma}{A.16}\label{lemma16}  If   $$ J=(x,y)^2+(x,y)(z,w)+ ( ax+by+cz+dw, ex+fy+z^2+\alpha w^2, gx+hy+zw )$$ with ht$(x,y,z,w, c,d)=6$, ht$(c,g,h) \geq 2$, ht$(d,g,h) \geq 2$, ht$(e,f,g,h) \geq 2$ and ht$\left(I_2  \begin{pmatrix}e&f\\ g&h \end{pmatrix}\right) =1$, then $J$ is $(x,y,z,w)$-primary with pd$(S/J)=5$ and $e(S/J)=3$.
  \end{customlemma} 

     \begin{customlemma}{A.17}\label{lemma17}  If  $$J=(x,y)^2+(x,y)(z,w)+( ax+by+cz+dw, ex+z^2+ w^2, ey+zw,c^2y+cdx+d^2y)$$ or $$J=(x^2, xy, xz, xw, y^2, yz, yw,  w^3, ax+by+cz+dw, ex+z^2, ey+zw,cx+dy)$$ with ht$(c,d,e)=3$ mod $(x,y,z,w)$,  then $J$ is $(x,y,z,w)$-primary with pd$(S/J)=5$ and $e(S/J)=3$. 
  \end{customlemma}

    \begin{customlemma}{A.18}\label{lemma18} If $$J=(x,y)^2+(x,y)(z,w)+  ( ax+by+cz+dw, ex+fy+ z^2+\alpha w^2,  cx+zw, dx-z^2 )$$ with ht$(c,d)=$ht$(e,f) =2$ mod $(x,y,z,w)$, then $J$ is $(x,y,z,w)$-primary with pd$(S/J)=5$ and $e(S/J)=3$. 
      \end{customlemma}
      
       \begin{customlemma}{A.19}\label{lemma19} If $$J=(x,y)^2+(x,y)(z,w)+( ax+by+cz+dw, ex+fy, gx+hy+z^2)$$ with ht$(c,d)=$ht$(e,f)=2$ mod $(x,y,z,w)$, then $J$ is $(x,y,z,w)$-primary with pd$(S/J)=5$ and $e(S/J)=3$. 
      \end{customlemma}   
      
           \begin{customlemma}{A.20}\label{lemma20} If $$J=(x,y)^2+(x,y)(z,w)+ (ax+by+cz+dw, ex+fy+w^2, gx+hy+z^2)$$ with ht$(x,y,z,w, c,d)=6$, ht$(c,e,f) \geq 2$, ht$(d,g,h) \geq 2$ and ht$\left( I_2 \begin{pmatrix}e&f\\ g&h \end{pmatrix} \right)=1$ , then $J$ is $(x,y,z,w)$-primary with pd$(S/J)=5$ and $e(S/J)=3$. 
      \end{customlemma}   
      
           \begin{customlemma}{A.21}\label{lemma21} If $$J=(x,y)^2+(x,y)(z,w)+(ax+by+cz+dw, cx+w^2, gx+hy+z^2, dx-zw)$$ with ht$(x,y,z,w, c,d)=6$, ht$(d,g,h) \geq 2$, then $J$ is $(x,y,z,w)$-primary with pd$(S/J)=5$ and $e(S/J)=3$. 
      \end{customlemma}

    \begin{customlemma}{A.22}\label{lemma22}  If  $$J=  (x, y, z,w)^3+ (ax+by+cz+dw, ex+fy+gz+hw, kx+ly+mz+nw)$$ with ht$(x,y,z,w,I_3(N)) \geq 7$ where   $$\tiny N=\left(
\begin{array}{*{14}c}
 0& 0 & 0 & 0 & a & b & c & d & e & f & g & h &\\
 -a& -b & -c & -d & 0 & 0 & 0 & 0 &  k & l &m &n& \\
-e& -f& -g&-h&-k&-l&-m&-n& 0&0&0&0 & \\
\end{array}
\right)$$   then $J$ is $(x,y,z,w)$-primary with pd$(S/J)=6$ and $e(S/J)=3$.
  \end{customlemma}  

   \begin{customlemma}{A.23}\label{lemma23}  If  $$J=  (x, y, z,w)^3+ (ax+by, ay+bz, xz-y^2, kx+ly+mz+nw)$$
   or $$J=  (x, y, z,w)^3+ (ax+by, ay+bz, yz-xw, kx+ly+mz+nw)$$
    with ht$(x,y,z,w,a,b,k,l,m,n) \geq 7$,    then $J$ is $(x,y,z,w)$-primary with pd$(S/J)=6$ and $e(S/J)=3$.
  \end{customlemma}

      \begin{customlemma}{A.24}\label{lemma24}  If  $$J = (x,y,z,w)^3+(ax+fy, ex+fy+gz, kx+ly+mz+nw, z^2a-ywa+ywe-yzk)$$ or 
      $$J=(x,y,z,w)^3+(ax+gy+fz,ex+fy+gz , kx+fz+gw, ya- wa - ye + yk)$$
       with ht$(x,y,z,w,a,e,f,g, k) \geq 8$,  then $J$ is $(x,y,z,w)$-primary with pd$(S/J)=7$ and $e(S/J)=3$.
  \end{customlemma}

    \begin{customlemma}{A.25}\label{lemma25}  If  $$J = (x, y, z,w)^3+ (ax+by, ex+fy+nz, kx+ly+nw, (af-be)w-(al-kb)z))$$ with ht$(x,y,z,w,a,b,n)= 7$ and   ht $(I_5(N)) \geq 3$ mod$(x,y,z,w)$ where 
    $$\tiny N'=\left(
\begin{array}{*{14}c}
 0& 0 & 0 & 0 & a & b & 0& 0 & e & f & n & 0 &0&0\\
 -a& -b & 0& 0 & 0 & 0 & 0 & 0 &  k & l &0 &n &0&0 \\
-e& -f& -n&0&-k&-l&0&-n& 0&0&0&0 &0&0 \\
0&0&af-be&0&0&0&al-bk&0&0&0&el-fk&0&n&0\\
0&0&0&af-be&0&0&0&al-bk&0&0&0&el-fk&0&n\\
\end{array}
\right),$$  then $J$ is $(x,y,z,w)$-primary with pd$(S/J)=6$ and $e(S/J)=3$.
  \end{customlemma}
  
   \begin{customlemma}{A.26}\label{lemma26}  If  $$J = (x, y, z,w)^3+ (ax+by, ex+nz, kx+nw, ew+kz)$$ with ht$(x,y,z,w,a,b,n,e,k) \geq 8$,  then $J$ is $(x,y,z,w)$-primary with pd$(S/J)=7$ and $e(S/J)=3$.
  \end{customlemma}
    
 \begin{customlemma}{A.27}\label{lemma27}  If  $$J =(x,y,z)^3+(w, ax+by, ex+fy+nz) $$ with ht$(x,y,z,w,a,b,n,e,f) \geq 7$,  then $J$ is $(x,y,z,w)$-primary with pd$(S/J)=6$ and $e(S/J)=3$.
  \end{customlemma}

  \begin{customlemma}{A.28}\label{lemma28}  If  $$J =(x,y,z,w)^3+(g_i, g_j, g_k)+(x,y,z,w)g_l$$ where $g_1=ax+by+q_1, g_2=ex+fy+nz+q_2, g_3= kx+ly+nw+q_3,$ and  $g_4=(af-be)w-(al-kb)z) +q_4$, with $q_1, q_2, q_3 \in (z,w)^2$, $q_4 \in (x,y)^2$ and one of the $q_i \neq 0$, ht$(x,y,z,w,a,b,n)= 7$ and   ht $(I_5(N)) \geq 3$ mod $(x,y,z,w)$ where 
    $$\tiny N'=\left(
\begin{array}{*{14}c}
 0& 0 & 0 & 0 & a & b & 0 & 0 & e & f & n & 0 &0&0\\
 -a& -b & 0& 0 & 0 & 0 & 0 & 0 &  k & l &0 &n &0&0 \\
-e& -f& -n&0&-k&-l&0&-n& 0&0&0&0 &0&0 \\
0&0&af-be&0&0&0&al-bk&0&0&0&el-fk&0&n&0\\
0&0&0&af-be&0&0&0&al-bk&0&0&0&el-fk&0&n\\
\end{array}
\right),$$  then $J$ is $(x,y,z,w)$-primary with pd$(S/J)=6$ and $e(S/J)=3$.
  \end{customlemma}  
   
     \begin{customlemma}{A.29}\label{lemma29}  If   $$J =(x, y, z,w)^3+(ax+gy, ex+gz, ey-az, kx+ly+mz+nw)$$ with ht$(x,y,z,w,a,g,e,k,l,m,n) \geq 8$ 
 then $J$ is $(x,y,z,w)$-primary with pd$(S/J)=7$ and $e(S/J)=3$.
  \end{customlemma}

  \begin{customlemma}{A.30}\label{lemma30}  If  $$J =(x,y,z,w)^3+(g_i, g_j, kx+ly+mz+nw)+(x,y,z,w)g_l$$ where $g_1=ax+gy+q_1, g_2=ex+gz+q_2$ and $g_3= ey-az+q_3$ with $q_1 \in (z,w)^2, q_2 \in (y,w)^2, q_3 \in(x,w)^2$,  ht$(x,y,z,w,a,e,g,k,l,m,n) \geq 8$ 
 then $J$ is $(x,y,z,w)$-primary with pd$(S/J)=7$ and $e(S/J)=3$.
  \end{customlemma}  

  \begin{customlemma}{A.31}\label{lemma31}  If  $$J =(xu, yu, zu, wu, xv, yv, zv, wv, xl, yl, zl, wl, ax+by+cz+dw)$$ where  ht$(x,y,z,w,u,v,l)=7$ 
 then $J$ is unmixed with pd$(S/J)=6$ and $e(S/J)=3$.\\
 
 Furthermore, $$L = (xu, yv, zl, ax+by+cz+dw):J= (xu, yv, zl, ax+by+cz+dw, xyz, duvl)$$ and pd$(S/L)=5$.
  \end{customlemma}

\begin{customlemma}{A.32}\label{lemma32}  If  $$J = (xy, xz, xs, xt, y^2, yz,yw, z^2, zw,  w^2t, w^2s,  by+cz+ws, ey+fz)$$ where  ht$(x,y,z,w,s,t,I_2\begin{pmatrix}b&c&d\\ e&f&g \end{pmatrix}) \geq 7$
 then $J$ is unmixed with pd$(S/J)=6$ and $e(S/J)=3$. \\
 Furthermore, \begin{align*} L=&(xt, y^2, z^2,  by+cz+ws):J\\
 =&(xt, y^2, z^2,  by+cz+ws, yzst, xyzw, ts^2(ye-zf))
 \end{align*}
 and pd$(S/L)=5$.
 
  \end{customlemma}  

  \begin{customlemma}{A.33}\label{lemma33}  If  $$J =(ax+by+cz+tw, ex+fy+gz, kx+ly+mz)+(x,y,z,t)\cap(x,y,z,w)^2 $$ where ht$(x,y,z,w,I_2\begin{pmatrix}e&f&g \\ k& l& m \end{pmatrix})=6$,
 then $J$ is unmixed, pd$(S/J)=5$ and $e(S/J)=3$.\\
 Furthermore,\\
 \begin{align*} 
  L&=(x^2,ax+by+cz+tw, ex+fy+gz, kx+ly+mz ):J\\
 &=(x^2,ax+by+cz+tw, ex+fy+gz, kx+ly+mz ,x(gl-fm), t(gl-fm)^2)
 \end{align*} 
  and pd$(S/L)=5$.

  \end{customlemma}  
  
    \begin{customlemma}{A.34}\label{lemma34}  If  $$J = (ax+by+cz+tw, ex+fy+gz)+(x,y,z,t)\cap[(x,y,z,w)^2+(kx+ly+mz+nw)]$$ where  ht$(x,y,z,w,t,I_2\begin{pmatrix}e&f&g\\ k&l&m \end{pmatrix})=7$
 then $J$ is unmixed, pd$(S/J)=5$ and $e(S/J)=3$.\\
Furthermore, 
{\small \begin{align*} L=&(x^2, y^2,z^2, w^2t ):J\\
=&\left(x^2, y^2,z^2, w^2t , xyzyw, xyztI_3 \begin{pmatrix} a&b&c\\e&f&g\\k&l&m \end{pmatrix}-xywtI_3 \begin{pmatrix} a&b&t\\e&f&0\\k&l&n \end{pmatrix}+xzwtI_3 \begin{pmatrix} a&c&t\\e&g&0\\k&m&n \end{pmatrix}-yzwtI_3 \begin{pmatrix} b&c&t\\f&g&0\\l&m&n \end{pmatrix}  \right)
\end{align*}}

and pd$(S/L)=5$.

  \end{customlemma} 
  
     \begin{customlemma}{A.35}\label{lemma35}  If  $$J =(x,y,z)^2 +( wx, wy, wz,w^2t,ax+by, kx+ly+nw, ex+fy+nz)$$ where  ht$(x,y,z,w,a,b,n)=7$  then $J$ is unmixed, pd$(S/J)=5$ and e$(S/J)=3$. \\
     
Furthermore, $$L=(x^2, y^2, z^2, kx+ly+nw):J= (x^2, y^2, z^2, kx+ly+nw, xyzn, xyn(af-be)+zn^2(yb-xz))$$ and pd$(S/L)=5$.
  \end{customlemma}
  
  \begin{customlemma}{A.36}\label{lemma36}  If  $$J =(ax+gy, ex+gz, ey-az, kx+ly+mz+nw, x^2, xy, xz, xw, y^2, yz, yw, z^2, zw, w^2t )$$ where  ht$(x,y,z,w,a,e,g)=7$  then $J$ is unmixed, pd$(S/J)=6$ and e$(S/J)=3$. \\
     
Furthermore, $$L=(x^2, y^2, z^2, kx+ly+nw):J= (x^2, y^2, z^2, kx+ly+mz+nw, xyzn, n^2(xaz+xye-gyz))$$ and pd$(S/L)=5$.
  \end{customlemma}
  
   \begin{customlemma}{A.37}\label{lemma37}  If  $$J =(x,y,z,w,)^3+(ax+by, ay+bz,y^2-xz, kx+ly+mz+nw)$$ or $$J =(x,y,z,w,)^3+(ax+by, az+bw,yz-xw, kx+ly+mz+nw)$$ where $n \neq 0$ and ht$(x,y,z,w,a,b,k,l,m,n) \geq 7$  then $J$ is $(x,y,z,w)$-primary, pd$(S/J)=6$ and e$(S/J)=3$. \\
    If  $$J =(x,y,z,w,)^3+(ax+by, ay+bz,az+bw, y^2-xz, , z^2 - yw, yz - xw)$$ and ht$(x,y,z,w,a,b) \geq 6$  then $J$ is $(x,y,z,w)$-primary, pd$(S/J)=5$ and e$(S/J)=3$.
   
   \end{customlemma}
   
    \begin{customlemma}{A.38}\label{lemma38}  If $$J =(x,y,z,w)^3+(dx+ay, cx+by+az, bz+aw, ywc + z^2 d- ywd) \hspace{0.2cm} \mbox {or}$$
      $$J =(x,y,z,w)^3+(dx+by, cx+by+az, bz+aw, yzc - z^2 d + ywd) \hspace{0.2cm} \mbox {or}$$  $$J =(x,y,z,w)^3+(dx+ay, cx+bz+aw, by+az, y^2c-yzd+z^2d-ywd) \hspace{0.2cm} \mbox {or}$$ $$J =(x,y,z,w)^3+(dx+by, cx+bz+aw, by+az, yzc-z^2d+ywd)\hspace{0.2cm} \mbox {or}$$  $$J =(x,y,z,w)^3+(ax+by, cx+by+az, dx+bz+aw, xzc-ywc-xyd+yzd) \hspace{0.2cm} \mbox {or}$$ $$J =(x,y,z,w)^3+(ax+by, cx+by+az, dx+bz+aw, xzc-zwc-xyd+z^2d) $$
     with ht$(x,y,z,w,a,b,c,d)=8$  then $J$ is $(x,y,z,w)$-primary, pd$(S/J)=7$ and e$(S/J)=3$.
   \end{customlemma}
   
       \begin{customlemma}{A.39}\label{lemma39}  $$ \mbox{If} \hspace{0.2cm} J =(x,y,z,w)^3+(dx+ay, cx+by+az, bz+aw, ywc + yzd -zwd) \hspace{0.2cm} \mbox {or}$$
      $$J =(x,y,z,w)^3+(dx+ay+az, cx+by+az, az+bw, ywc + zwc +yzd -zwd) \hspace{0.2cm} \mbox {or}$$   $$J =(x,y,z,w)^3+(dx+ay+bz, cx+by+az, az+bw, z^2 c -ywc - yzd
  + zwd) \hspace{0.2cm} \mbox {or}$$ $$J =(x,y,z,w)^3+(dx+by, cx+bz+aw, ay+bz, y^2c - yzd + zwd)\hspace{0.2cm} \mbox {or}$$ 
   $$J =(x,y,z,w)^3+(dx+by+bz, cx+bz+aw, ay+bz, y^2c + yzc -yzd+zwd)\hspace{0.2cm} \mbox {or}$$  $$J =(x,y,z,w)^3+(dx+by+az, cx+bz+aw, ay+bz, y^2c-z^2c-yzd+zwd) \hspace{0.2cm} \mbox {or}$$ 
     with ht$(x,y,z,w,a,b,c,d)=8$  then $J$ is $(x,y,z,w)$-primary, pd$(S/J)=7$ and e$(S/J)=3$.
   \end{customlemma}
    
       \begin{customlemma}{A.40}\label{lemma40}  $$ \mbox{If} \hspace{0.2cm} J =(x,y,z,w)^2+(ax+by, kz+lw, ly+az, kx+bw, bz -lx, aw -ky) $$
  with ht$(x,y,z,w,a,b,k,l)=8$  then $J$ is $(x,y,z,w)$-primary, pd$(S/J)=7$ and e$(S/J)=3$.
   \end{customlemma}
   
\section*{Acknowledgment}

\end{document}